\documentclass[10pt,a4paper,reqno]{amsart}%,leqno
\usepackage{amssymb,amsmath,amsthm,fancyhdr,graphicx,latexsym,mathabx}
\usepackage{charter}
\usepackage{enumitem}
\usepackage{xcolor}

\newtheorem{definition}{Definition}[section]
\newtheorem{thm}[definition]{Theorem}

\newtheorem{lem}[definition]{Lemma}
\newtheorem{cor}[definition]{Corollary}  
\newtheorem{prop}[definition]{Proposition}

\newtheorem{question}[definition]{Question}

\theoremstyle{plain}  
\newtheorem{remark}[definition]{Remark}

\numberwithin{equation}{section}
\numberwithin{figure}{section}

\newcommand{\eqand}{\ensuremath{\quad \textrm{ and } \quad}}

\newcommand{\eqkomma}{\ensuremath{\quad , \quad}}

\newcommand{\foot}{\footnote}

\newcommand{\lb}{\ensuremath{\left(}}
\newcommand{\rb}{\ensuremath{\right)}}
\newcommand{\equi}{\ensuremath{\Leftrightarrow}}
\newcommand{\follows}{\ensuremath{\Rightarrow}}

\newcommand{\ld}{\ensuremath{,\ldots,}}
\newcommand{\ssq}{\ensuremath{\subseteq}}
\newcommand{\smin}{\ensuremath{\setminus}}

\newcommand{\ind}{\ensuremath{\mathbf{1}}}

\newcommand{\Id}{\ensuremath{\mathrm{Id}}}

\newcommand{\Per}{\ensuremath{\mathop{\rm Per}}}

\newcommand{\nfolge}[1]{\ensuremath{(#1)_{n\in\mathbb{N}}}}
\newcommand{\nofolge}[1]{\ensuremath{(#1)_{n\in\mathbb{N}_0}}}
\newcommand{\nzfolge}[1]{\ensuremath{(#1)_{n\in\mathbb{Z}}}}
\newcommand{\kfolge}[1]{\ensuremath{(#1)_{k\in\mathbb{N}}}}

\newcommand{\jfolge}[1]{\ensuremath{(#1)_{j\in\mathbb{N}}}}

\newcommand{\Matrix}[2]{\ensuremath{\left(\begin{array}{#1} #2
      \end{array}\right)}} 

\newcommand{\twovector}[2]{\ensuremath{\left(\begin{array}{c} #1 \\
        #2 \end{array}\right)}}

\newcommand{\alphlist}{\begin{list}{(\alph{enumi})}{\usecounter{enumi}\setlength{\parsep}{2pt}
      \setlength{\itemsep}{1pt} \setlength{\topsep}{5pt}
      \setlength{\partopsep}{3pt}}}
\newcommand{\arablist}{\begin{list}{(\arabic{enumi})}{\usecounter{enumi}\setlength{\parsep}{2pt}
          \setlength{\itemsep}{1pt} \setlength{\topsep}{5pt}
          \setlength{\partopsep}{3pt}}}
\newcommand{\romanlist}{\begin{list}{(\roman{enumi})}{\usecounter{enumi}\setlength{\parsep}{2pt}
              \setlength{\itemsep}{1pt} \setlength{\topsep}{5pt}
              \setlength{\partopsep}{3pt}}}
\newcommand{\Romanlist}{\begin{list}{(\Roman{enumi})}{\usecounter{enumi}\setlength{\parsep}{2pt}
              \setlength{\itemsep}{1pt} \setlength{\topsep}{5pt}
              \setlength{\partopsep}{3pt}}}
\newcommand{\bulletlist}{\begin{list}{$\bullet$}{\setlength{\parsep}{2pt}
                \setlength{\itemsep}{1pt} \setlength{\topsep}{5pt}
                \setlength{\partopsep}{3pt}\setlength{\leftmargin}{15pt}}} 
\newcommand{\Alphlist}{\begin{list}{(\Alph{enumi})}{\usecounter{enumi}\setlength{\parsep}{2pt}
      \setlength{\itemsep}{1pt} \setlength{\topsep}{5pt}
      \setlength{\partopsep}{3pt}}}
\newcommand{\listend}{\end{list}}

\newcommand{\N}{\ensuremath{\mathbb{N}}} 
\newcommand{\R}{\ensuremath{\mathbb{R}}}
\newcommand{\Z}{\ensuremath{\mathbb{Z}}}

\newcommand{\cA}{\ensuremath{\mathcal{A}}}

\newcommand{\cF}{\ensuremath{\mathcal{F}}}

\newcommand{\cO}{\ensuremath{\mathcal{O}}}
\newcommand{\cP}{\ensuremath{\mathcal{P}}}
\newcommand{\cQ}{\ensuremath{\mathcal{Q}}}
\newcommand{\cR}{\ensuremath{\mathcal{R}}}

\newcommand{\cW}{\ensuremath{\mathcal{W}}}

\newcommand{\nLim}{\ensuremath{\lim_{n\rightarrow\infty}}}

\newcommand{\kLim}{\ensuremath{\lim_{k\rightarrow\infty}}}

\newcommand{\nKonv}{\ensuremath{\stackrel{n\rightarrow
      \infty}{\longrightarrow}}}

\newcommand{\mKonv}{\ensuremath{\stackrel{m\rightarrow
      \infty}{\longrightarrow}}}

\newcommand{\nocup}{\ensuremath{\bigcup_{n\in\N_0}}}

\newcommand{\jnergsum}{\ensuremath{\sum_{j=0}^{n-1}}}

\newcommand{\ntel}{\ensuremath{\frac{1}{n}}}

\newcommand{\floor}[1]{\left\lfloor #1 \right\rfloor}

\newcommand\restr[2]{{% we make the whole thing an ordinary symbol
  \left.\kern-\nulldelimiterspace % automatically resize the bar with \right
  #1 % the function
  \littletaller % pretend it's a little taller at normal size
  \right|_{#2} % this is the delimiter
  }}

\newcommand{\littletaller}{\mathchoice{\vphantom{\big|}}{}{}{}}
\newcommand{\Ap}{\ensuremath{\mathrm{Ap}}}
\newcommand{\ap}{\ensuremath{\mathrm{ap}}}

\title{On lifts of strictly ergodic subshifts by permutative sliding block codes} \author{Z.~Kang
  \and T.~J\"ager}

\begin{document}

\begin{abstract} Permutative sliding block codes -- in the sense of Hedlund -- give rise to
  finite-to-one extensions of subshifts. We provide criteria under which minimality and unique
  ergodicity are preserved by this `lifting procedure'. These findings are illustrated by means of
  some natural example families, which we use to obtain strictly ergodic finite-to-one extensions of
  substitution subshifts (including the case of Fibonacci, Tribonacci, silver mean and noble means
  substitutions). We further study the interplay between permutative sliding block codes and Toeplitz
  flows. In particular, we find examples which demonstrate that a permutative lift of a strictly
  ergodic subshift may be minimal, but not uniquely ergodic -- a phenomenon which cannot occur for
  primitive substitution subshifts.
  
  \noindent{\em 2010 Mathematics Subject Classification. Primary: 37B10, Secondary: 37A05, 37B05}
  
\end{abstract}

\maketitle
%\tableofcontents

\section{Introduction}

\subsection{Context and overview.} The study of extension structures has a long tradition in ergodic
theory and topological dynamics. Early seminal results include the structure theorems by Furstenberg
and Veech for minimal distal flows \cite{Furstenberg1963StructureTheorem,Veech1970PointDistalFlows}
and the Furstenberg-Zimmer Structure Theorem for ergodic measure-preserving transformations
\cite{Zimmer1976FurstenbergZimmerStructureTheorem,Furstenberg1977FurstenbergZimmerStructureTheorem}. Further
progress with a particular view towards number-theoretic applications has been made in
\cite{HostKra2005NonconventionalErgodicAverages,HostKraMaass2010NilsequencesStructureTheorem} (see
also \cite{HostKra2018NilpotentStructures}). In the context of entropy theory, symbolic extensions of
smooth dynamical systems have been studied in
\cite{DownarowiczNewhouse2005SymbolicExtensionsOfSmoothSystems,%
  DownarowiczMaas2009SymbolicExtensionsOfSmoothIntervalMaps,Burguet2011SymbolicExtensionsOfSurfaceDiffeomorphisms}. Apart
from these particular examples, there exists a wealth of further results on extension structures of
dynamical systems in the literature -- we restrict here to cite only to a biased selection
\cite{AuslanderGlasner1977DistalAndHighlyProximalExtensions,FurstenbergGlasner1978ExistenceOfIsometricExtensions,%
  ShaoYe2012RegionallyProximalRelationForMinimalSystems,GlasnerGutmanYe12018HigherOrderRegionallyProximalRelations,%
  GlasnerHuangShaoWeissYe2025TopologicalCharacteristicFactorsAndNilsystem,%
  QiuYe2025VeechTheoremOfHigherOrder}.

More recently, different topological and measure-theoretic concepts of finite extension structures and
their relations to dynamical properties have received considerable attention (e.g.\
\cite{LiTuYe2015MeanSensitivity,DownarowiczGlasner2015IsomorphicExtensionsAndMeanEquicontinuity,%
  Glasner2018MinimalTameSystems,%
  FuhrmannGlasnerJaegerOertel2021TameImpliesRegular,GarciaJaegerYe2021DiamMeanEquicontinuity,%
  BreitenbucherHauptJaeger2024FiniteTopomorphicExtensions,GarciaRamosLeon2024CoincidenceRankAndMultivariateEquicontinuity,%
  LiuXuZhang2025IndependenceMeanSensitivityAndMTameness}). At the
same time, multivariate versions of classical notions like stability, sensitivity, equicontinuity or
tameness have equally attracted significant interest
\cite{HuangLianShaoYe2021MinimalSystemWithFinitelyManyMeasures,LiYu2021MeanSensitiveTuples,LiYeYu2022EquicontinuityAndSensitivityInMeanForms,%
  LiuYin2023MeanSensitiveTuplesOfGroupActions,LiLiuTuYu2023SequenceEntropyTuples}. These are again
closely related to respective notions of finite extension structures
\cite{ShaoYeZhang2008NSensitivity,HuangLianShaoYe2021MinimalSystemWithFinitelyManyMeasures,%
  BreitenbucherHauptJaeger2024FiniteTopomorphicExtensions,GarciaRamosLeon2024CoincidenceRankAndMultivariateEquicontinuity,%
  Haupt2025multivariateFrequentStability,LiuXuZhang2025IndependenceMeanSensitivityAndMTameness,%
  Liu2026ConditionalTopomorphicDegreeaAndMultivariateMeanEquicontinuity}. From a broader viewpoint,
these developments contribute towards a structure theory of zero entropy systems in topological
dynamics.

In this context, the construction and description of examples is of vital interest. The problem is
that the main interest does not lie in arbitrary finite-to-one extension of a given system --
otherwise the direct product with any finite system would do -- but rather in those extensions that
keep essential dynamical properties of the underlying system. In particular, much of the
above-mentioned theory on finite extensions has been developed in the setting of minimal or even
strictly ergodic systems, so that finite extensions which preserve these properties are of particular
interest. However, the construction of minimal or strictly ergodic extensions is in general a
non-trivial problem and equally has a long history (e.g.\
\cite{anzai:1951,furstenberg:1961,GlasnerWeiss1979ConstructionOfMinimalSkewProducts,%
  Lemanczyk1987ErgodicZ2Extensions,Lemanczyk1988ToeplitzZ2Extensions,HermanPutnamSkau1992BratelliDiagrams,%
  bjerkloev:2005a,CortezPetite2008GOdometersAndExtensions,Jaeger2009SNAinCircleMaps,%
  DownarowiczGlasner2015IsomorphicExtensionsAndMeanEquicontinuity,DeeleyPutnamStrung2021NonhomogeneousExtensionsOfMinimalCantorSystems,%
  HauptJaeger2025SmoothIsomorphicExtensions}).\smallskip

In this article, we focus on a specific pathway that leads to a wealth of new examples of strictly
ergodic finite extensions of symbolic systems. The principal idea already goes back to the work of
Hedlund on sliding block codes \cite{Hedlund1969SlindingBlockCodes}, but it has apparently never been
fully exploited in this context so far. We consider the class of {\em permutative} sliding block
codes, which has been described in some detail in \cite[Sections 6 and
17]{Hedlund1969SlindingBlockCodes}. One of their structural properties is the fact that they are
always constant-to-one endomorphisms of the whole shift space. This means that they can be used to
lift any subshift to a finite-to-one extension by simply taking the preimage, which we will refer to
as a {\em permutative lift} of the subshift. However, it is a priori not clear to what extent
dynamical properties are preserved by this procedure. The main questions that we will focus on here
are the following.
\begin{itemize}
 \item  If $\varphi$ is a permutative sliding block code and $Y$ is a minimal subshift, is the extension
   $\varphi^{-1}(Y)$ minimal as well?
 \item If $\varphi$ is a permutative sliding block code and $Y$ is a uniquely ergodic subshift, is the
   extension $\varphi^{-1}(Y)$ uniquely ergodic as well?
 \end{itemize}
 It turns out that there is no general answer to these questions. In fact, on the one hand it is easy
 to see that even a fixed point of the shift can have non-minimal, and therefore also non-uniquely
 ergodic, lifts under permutative sliding block codes (a simple example will be given below). On the
 other hand, the Thue-Morse subshift, viewed as an extension of the period doubling subshift, provides
 a paradigmatic example in which strict ergodicity is perserved during this lifting
 procedure. Moreover, the answers to the above questions can be independent -- in
 Section~\ref{ToeplitzFlows}, we provide an example of a strictly ergodic Toeplitz flow whose lift
 under a simple permutative sliding block code is minimal, but not uniquely ergodic.

 Hence, our aim is to establish criteria -- both in terms of the subshift $Y$ and the sliding block
 code $\varphi$ -- which allow to ensure that the lift of a strictly minimal/uniquely ergodic subshift
 $Y$ under $\varphi$ is again minimal/uniquely ergodic. It turns out that the results we obtain hinge
 on the study of the behaviour of certain {\em return permutations} associated to a permutative
 sliding block code, which thus turn out to be a crucial concept in our context. We are convinced that
 these considerations will equally be instrumental in addressing a wealth of further question that
 naturally occur in this setting, in particular concerning further dynamical spectral properties and
 quantities or the study of broader classes of examples. The latter may also serve as a useful testing
 ground in the context of the above-mentioned recent developments in the structure theory of
 topologial dynamics.  \smallskip

 \subsection{Presentation of the main results}
 In order to give a more precise statement of the main concepts and results, we need to introduce some
 notation and terminology.  Given a finite alphabet $\cA_\kappa=\{0\ld \kappa-1\}$, $\kappa\in\N$, we
 call $\cA_\kappa^\Z$ the {\em shift space} over the alphabet $\cA_\kappa$ and define the {\em (left)
   shift map} $\sigma:\cA_\kappa^\Z\to\cA_\kappa^\Z$ by $\sigma(x)_n=x_{n+1}$ for all $n\in\Z$.  By
 $\cA^*_\kappa=\nocup \cA_\kappa^n$, we denote the set of all finite words with letters in
 $\cA_\kappa$ and write $|w|$ for the length of $w\in\cA^*_\kappa$. Given $y\in\cA_\kappa^\Z$ and
 integers $k<n$, we let $y_{[k,n]}=y_k\ldots y_n$.  A {\em subshift} $Y$ is a closed
 $\sigma$-invariant subset of $\cA^\Z_\kappa$ and $\cW(Y)$ denotes the set of all finite words that
 occur in sequences of $Y$.
 
 A continuous map $\varphi:\cA_\kappa^\Z\to\cA_\kappa^\Z$ is called a {\em shift endomorphism} if it
 commutes with the shift map, that is, $\varphi\circ\sigma=\sigma\circ\varphi$.  A {\em block code} of
 length $\ell\in\N$ is a map $h:\cA_\kappa^\ell\to\cA_\kappa$, and we denote by $\cF(\kappa,\ell)$ the
 family of such mappings.  Each block code $h$ induces a shift endomorphism $\eta_h$, referred to as a
 {\em sliding block code}, by
\[
\eta_h(x)_n \ = \ h\lb x_{[n,n+\ell-1]}\rb
\]
for all $n\in\Z$. Similarly, for each $m\in\N$, $h$ defines a map
$h_m:\cA^{m+\ell-1}_\kappa\to\cA^m_\kappa$ by $h_m(v)_n=h\lb v_{[n,n+\ell-1]}\rb$, $n=0\ld m-1$.  The
seminal Curtis-Hedlund-Lyndon Theorem states that, up to post-composition with a shift iterate, every
shift endomorphism is given by a sliding block code \cite{Hedlund1969SlindingBlockCodes}.

Following \cite[Section 6]{Hedlund1969SlindingBlockCodes}, we call a block code
$h\in \cF(\kappa,\ell)$ (and likewise its associated sliding block code $\eta_h$) {\em permutative} if
for all $u\in\cA_\kappa^{\ell-1}$ the mappings
\[
a \mapsto h(ua) \eqand a \mapsto h(au)
\]
are permutations of the alphabet $\cA_\kappa$. In this context, we will call elements of
$\cA_\kappa^{\ell-1}$ {\em prefixes} or {\em suffixes}, depending on the situation.  An important
observation is the following: given a permutative block code $h\in\cF(\kappa,\ell)$, a prefix
$u\in\cA_\kappa^{\ell-1}$ and an arbitrary sequence $y\in\cA_\kappa^\Z$, the permutative structure of
$h$ allows to construct -- via a straightforward induction -- a unique sequence $\xi^u(y)$ with prefix
$u$ and image $y$ under $\eta_h$. More precisely, we have
\begin{prop}[{\cite{Hedlund1969SlindingBlockCodes}}]
     \label{p.intro_inverse_branches} Let $h\in\cF(\kappa,\ell)$ be permutative.
     Then for every $u\in\cA_\kappa^{\ell-1}$ there exists a continuous mapping
     \[
       \xi^u:\cA_\kappa^\Z\to\cA_\kappa^\Z\quad , \quad y\mapsto \xi^u(y)
     \]
     such that $\xi^u(y)_{[0,\ell-2]}=u$ and $\eta_h\lb\xi^u(y)\rb=y$ holds for all
     $y\in \cA_{\kappa}^\Z$. Further, we have
      \begin{equation} \label{e.fibres_via_xi_u}
     \eta_h^{-1}(y) \ = \ \left\{\xi^u(y)\mid u\in\cA_\kappa^{\ell-1}\right\} \ , 
      \end{equation}
      and consequently $\#\eta^{-1}_h(y)=\kappa^{\ell-1}$ for all $y\in\cA_\kappa^\Z$.

      Moreover, if $u\neq u'\in\cA^{\ell-1}_\kappa$, the sequences $\xi^u(y)$ and $\xi^{u'}(y)$ are
      $(\ell-1)$-separated, meaning that they do not coincide on any interval of $\ell-1$ consecutive
      integers.
    \end{prop}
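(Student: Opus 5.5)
The plan is to construct $\xi^u(y)$ coordinate by coordinate, extending to the right and to the left using the two permutation properties. For $a\in\cA_\kappa$ and $v\in\cA_\kappa^{\ell-1}$ write $\rho_v(a)=h(va)$ and $\lambda_v(a)=h(av)$. By assumption these are permutations of $\cA_\kappa$.

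First fix $y$ and set $x_{[0,\ell-2]}=u$. For $n\ge 0$, assume $x_{[n,n+\ell-2]}$ is already defined. We then put
\[
x_{n+\ell-1}=\rho_{x_{[n,n+\ell-2]}}^{-1}(y_n),
\]
which is the unique letter with $h(x_{[n,n+\ell-1]})=y_n$.

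For negative $n$, assume $x_{[n+1,n+\ell-1]}$ is already defined. We then put
\[
x_n=\lambda_{x_{[n+1,n+\ell-1]}}^{-1}(y_n),
\]
which is the unique letter with $h(x_{[n,n+\ell-1]})=y_n$.

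This induction defines $x=\xi^u(y)$ with prefix $u$ and $\eta_h(x)=y$. The same induction shows something stronger. Any $x'$ with $\eta_h(x')=y$ and $x'_{[0,\ell-2]}=u$ satisfies, at each step, the same equation with a unique solution, so $x'=x$. This uniqueness will be reused twice below.

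For continuity, note that $x_{[-N,N+\ell-2]}$ is determined by $u$ and $y_{[-N,N]}$ alone, since each inductive step only uses the relevant $y_n$. Hence cylinders pull back to open sets, and $\xi^u$ is continuous.

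Next we prove the fibre identity \eqref{e.fibres_via_xi_u}. The inclusion $\supseteq$ is immediate from $\eta_h\circ\xi^u=\Id$. For $\subseteq$, take $x\in\eta_h^{-1}(y)$ and set $u=x_{[0,\ell-2]}$; uniqueness then gives $x=\xi^u(y)$. The map $u\mapsto\xi^u(y)$ is injective, because distinct prefixes give distinct sequences. Therefore the fibre has exactly $\kappa^{\ell-1}$ elements.

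Finally, for the separation claim, suppose $\xi^u(y)$ and $\xi^{u'}(y)$ agree on some window $[m,m+\ell-2]$. Then $\sigma^m\xi^u(y)$ and $\sigma^m\xi^{u'}(y)$ are both preimages of $\sigma^m y$, since $\eta_h$ commutes with $\sigma$, and they share the same prefix. By uniqueness applied to $\sigma^m y$, they coincide. Hence $\xi^u(y)=\xi^{u'}(y)$, and in particular $u=u'$.

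No real obstacle arises. The only point needing care is that the uniqueness statement must be formulated for an arbitrary $y$ and an arbitrary starting window, not just the window $[0,\ell-2]$, so that it applies to the shifted sequences in the separation argument. Running the two-sided induction from a general window $[m,m+\ell-2]$ handles this.
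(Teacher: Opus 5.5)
Your proposal is correct and follows essentially the same route as the paper: the two-sided inductive construction via $\rho_v^{-1}$ and $\lambda_v^{-1}$, continuity because $\xi^u(y)_{[-N,N+\ell-2]}$ depends only on $y_{[-N,N]}$, and the fibre identity from uniqueness of the preimage with prescribed prefix. For separation, the paper invokes uniqueness of the preimage $\xi^{u,n}(y)$ with a prescribed word on the window $[n,n+\ell-2]$, whereas you shift by $\sigma^m$ and reuse uniqueness on the window $[0,\ell-2]$ for $\sigma^m y$; these are equivalent, and your shift argument already makes the general-window formulation in your closing remark unnecessary.
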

    The mappings $\xi^u$ will be called {\em (continuous) inverse branches} of $\eta_h$.\smallskip

    We should point out that some of these assertions are not explicitely stated in
    \cite{Hedlund1969SlindingBlockCodes}, while others are obtained as consequences of more general
    results on surjective sliding block codes. We therefore provide a short direct proof in
    Section~\ref{InverseBranches}.\smallskip

    The inductive construction of the sequences $\xi^u(y)$ also works on a finite level, so that for
    each finite word $w\in\cA^*_\kappa$ and each prefix $u\in\cA^{\ell-1}_\kappa$ there exists a
    unique word $\xi^u(w)\in\cA_\kappa^{|w|+\ell-1}$ such that $\xi^u(w)_{[0,\ell-2]}=u$ and
    $h_{|w|}\lb\xi^u(w)\rb = w$. This allows to associate, to any given permutative block code $h$,
    the following family of permutations on the set of prefixes. Given a finite word $w\in\cA^*$, we
    define
    \[
      \Phi^{(h)}_w : \cA^{\ell-1}_\kappa\to\cA^{\ell-1}_\kappa \eqkomma u \mapsto
      \xi^u(w)_{[|w|,|w|+\ell-2]} \ 
    \]
    and call this the {\em prefix permutation induced by $w$}.  In other words, $\Phi^{(h)}_w(u)$ is
    defined as the suffix of the unique lift $\xi^u(w)$ of the word $w$ with prefix $u$.

    \begin{prop}\label{p.intro_prefix_permutations}
      Suppose that $h\in\cF(\kappa,\ell)$ is permutative. Then for every $w\in\cA^*_\kappa$, the
      mapping $\Phi_w^{(h)}$ is a permutation of $\cA_{\kappa}^{\ell-1}$.  Moreover, given
      $w,v\in\cA^*_\kappa$, we have
      \[
        \Phi^{(h)}_{wv} \ = \  \Phi^{(h)}_v\circ \Phi^{(h)}_w \ .
      \]
    \end{prop}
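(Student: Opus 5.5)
The plan is to prove the composition rule first and then obtain bijectivity from it together with the one-letter case.

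\emph{Step 1: uniqueness and concatenation of lifts.} I will first check a concatenation property of the finite lifts $\xi^u(w)$. Let $w,v\in\cA^*_\kappa$ and $u\in\cA^{\ell-1}_\kappa$. Put $u'=\Phi^{(h)}_w(u)=\xi^u(w)_{[|w|,|w|+\ell-2]}$. Consider the word $z$ of length $|w|+|v|+\ell-1$ obtained by taking $\xi^u(w)$ and overwriting its last $\ell-1$ letters, which form $u'$, with $\xi^{u'}(v)$; these two words overlap exactly in $u'$. Then $z$ has prefix $u$. Each window $z_{[n,n+\ell-1]}$ with $n<|w|$ lies inside $\xi^u(w)$, and each window with $n\ge|w|$ lies inside the copy of $\xi^{u'}(v)$. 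Hence $h_{|wv|}(z)=wv$.

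By the uniqueness of the finite lift, we get $z=\xi^u(wv)$. Uniqueness itself follows from the inductive construction. Once $z_{[0,n+\ell-2]}$ is fixed, the letter $z_{n+\ell-1}$ is forced by $h(z_{[n,n+\ell-2]}a)=w_n$, because $a\mapsto h(z_{[n,n+\ell-2]}a)$ is a permutation. Reading off the last $\ell-1$ letters of $z$ gives $\Phi^{(h)}_{wv}(u)=\Phi^{(h)}_v(u')=\Phi^{(h)}_v\circ\Phi^{(h)}_w(u)$. The empty word, if it is allowed, gives the identity, and this is consistent.

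\emph{Step 2: bijectivity.} By Step 1, every $\Phi^{(h)}_w$ is a composition of the one-letter maps $\Phi^{(h)}_a$, $a\in\cA_\kappa$. So it is enough to show that each $\Phi^{(h)}_a$ is a permutation. Since $\cA^{\ell-1}_\kappa$ is finite, I will prove injectivity of $\Phi^{(h)}_a$ by recovering $u$ from $u'=\Phi^{(h)}_a(u)$.

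For a single letter $a$, the lift $\xi^u(a)=u_0\cdots u_{\ell-2}b$ has length $\ell$ and satisfies $h(u_0\cdots u_{\ell-2}b)=a$. Its suffix is $u'=u_1\cdots u_{\ell-2}b$. Knowing $u'$ determines both $u_1\cdots u_{\ell-2}$ and $b$. The remaining letter $u_0$ is then the unique solution of $h(c\,u_1\cdots u_{\ell-2}b)=a$ in $c$, by the other half of permutativity, namely that $c\mapsto h(c\,s)$ is a permutation for each suffix $s$. Thus $u$ is determined by $u'$, so $\Phi^{(h)}_a$ is injective and hence bijective. Compositions of permutations are permutations.

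\emph{Where the difficulty lies.} There is no real obstacle. The only point needing care is Step 2, where left-permutativity is used; right-permutativity alone only gives existence and uniqueness of the lifts. The degenerate case $\ell=1$, where prefixes are empty and the statement is trivial, should be noted separately.
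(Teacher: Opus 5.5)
Your proof is correct and follows the paper: the composition rule comes from the same concatenation identity $\xi^u(wv)=\xi^u(w)_{[0,|w|-1]}\,\xi^{\Phi_w(u)}(v)$ together with uniqueness of lifts. For bijectivity, the paper instead gives the inverse $\Psi_w$ directly for every $w$, via the backward lift $\zeta^u(w)$ ending in $u$ (built from left-permutativity); your one-letter injectivity argument is the case $|w|=1$ of this, extended to all words by the composition rule.
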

    The proof is given in Section~\ref{InverseBranches}.  Altogether, the above concepts lead to a
    useful structural result, namely a skew-product representation for permutative lifts of subshifts.

    \begin{thm}\label{t:skew_product_representation}
      Suppose that $h\in \cF(\kappa,\ell)$ is a permutative block code, $Y\ssq \cA_\kappa^\Z$ is a
      subshift and $X=\eta_h^{-1}(Y)$. Let
  \begin{equation}\label{e.conjugacy_to_skew_product}
     \gamma : X \to Y\times\cA_\kappa^{\ell-1}\quad , \quad x \mapsto \left(\eta_h(x),x_{[0,\ell-2]}\right) 
  \end{equation}
   and 
  \begin{equation}\label{e.skew_product_system}
      T : Y\times \cA_\kappa^{\ell-1}\to Y\times\cA_\kappa^{\ell-1}\quad , \quad (y,u)\mapsto
      \left(\sigma(y),\Phi_{y_0}(u)\right) \ .
  \end{equation}
  Then $\gamma$ and $T$ are homeomorphisms and $\gamma$ is a conjugacy between $(X,\sigma)$ and
  $(Y\times\cA_\kappa^{\ell-1},T)$.
\end{thm}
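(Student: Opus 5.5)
The plan is to verify directly that $\gamma$ is a well-defined continuous bijection onto $Y\times\cA_\kappa^{\ell-1}$ with an explicit continuous inverse, and then check the intertwining relation $\gamma\circ\sigma=T\circ\gamma$. Once both are in hand, the homeomorphism property of $T$ follows for free, since $T=\gamma\circ\sigma\circ\gamma^{-1}$ and $\sigma$ is a homeomorphism of $X$.

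For the first step, continuity of $\gamma$ is immediate, since $\eta_h$ is continuous and $x\mapsto x_{[0,\ell-2]}$ depends only on finitely many coordinates. If $x\in X$, then $\eta_h(x)\in Y$ by definition of $X=\eta_h^{-1}(Y)$, so $\gamma$ maps into $Y\times\cA_\kappa^{\ell-1}$. As candidate inverse I take $\psi(y,u)=\xi^u(y)$, which is continuous on each slice $Y\times\{u\}$ by Proposition~\ref{p.intro_inverse_branches}, and hence continuous because $\cA_\kappa^{\ell-1}$ is finite and discrete. We have $\eta_h(\xi^u(y))=y\in Y$, so $\psi$ maps into $X$. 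Moreover $\gamma(\psi(y,u))=(y,u)$ because $\xi^u(y)_{[0,\ell-2]}=u$. Conversely, for $x\in X$ the identity \eqref{e.fibres_via_xi_u} gives $x=\xi^{u'}(\eta_h(x))$ for some $u'$. Reading off the first $\ell-1$ coordinates forces $u'=x_{[0,\ell-2]}$, so $\psi(\gamma(x))=x$. Since $X$ is compact and $\gamma$ is a continuous bijection onto a Hausdorff space, $\gamma$ is a homeomorphism.

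For the conjugacy, write $y=\eta_h(x)$ and $u=x_{[0,\ell-2]}$. Then $\gamma(\sigma x)=(\sigma y,x_{[1,\ell-1]})$, using that $\eta_h$ commutes with $\sigma$. It remains to show $x_{[1,\ell-1]}=\Phi_{y_0}(u)$. The word $x_{[0,\ell-1]}$ has length $\ell$, starts with $u$, and satisfies $h_1(x_{[0,\ell-1]})=h(x_{[0,\ell-1]})=y_0$. By the uniqueness of the finite lift, $x_{[0,\ell-1]}=\xi^u(y_0)$, so its suffix $x_{[1,\ell-1]}$ equals $\Phi_{y_0}(u)$ by the definition of the prefix permutation. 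Hence $\gamma\circ\sigma=T\circ\gamma$.

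Finally, $T=\gamma\sigma\gamma^{-1}$ is a homeomorphism. Alternatively one can see this directly: Proposition~\ref{p.intro_prefix_permutations} makes each $\Phi_{y_0}$ a permutation, so $T$ is bijective with inverse $(y,u)\mapsto(\sigma^{-1}y,\Phi_{y_{-1}}^{-1}(u))$, and $T$ is continuous because $\Phi_{y_0}$ depends only on $y_0$. The argument has no genuine obstacle. The only point needing care is the uniqueness of the finite lift, which identifies $x_{[1,\ell-1]}$ with $\Phi_{y_0}(u)$; that uniqueness is built into the definition of $\xi^u(w)$ through permutativity in the last letter.
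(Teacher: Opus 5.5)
Your proposal is correct and follows the paper's proof: it uses the explicit inverse $(y,u)\mapsto\xi^u(y)$, the compact-Hausdorff argument, and the direct verification of $\gamma\circ\sigma=T\circ\gamma$. The only differences are minor. You get the homeomorphism property of $T$ from $T=\gamma\circ\sigma\circ\gamma^{-1}$, while the paper argues directly that $T$ is a continuous bijection. You also spell out the step $\sigma(x)_{[0,\ell-2]}=x_{[1,\ell-1]}=\Phi_{y_0}(x_{[0,\ell-2]})$ via uniqueness of the finite lift $x_{[0,\ell-1]}=\xi^u(y_0)$, which the paper leaves implicit.
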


A particularly significant are prefix permutations induced by return words: we call
    \[
      \cR^Y_h(w) \ = \ \left\{ \Phi_{wv} \mid wvw\in \cW(Y)\right\}
    \]
    the set of {\em return permutations} for the word $w$.
    \begin{remark}
      We note that $\cR^Y_h(w)$ could also be replaced by the possibly larger set
      $\tilde\cR^Y_h(y)=\left\{\Phi_v\mid vw\in \cW(Y) \textrm{ and } vw \textrm{ starts with }
        w\right\}$. All the results below still hold in this case and the proofs remain valid with
      only minor technical modifications.
    \end{remark}
    Further, we say a subset $P$ of the permutation group $\Per\lb\cA^{\ell-1}_\kappa\rb$ acts
    transitively on $\cA^{\ell-1}_\kappa$ if this holds for the subgroup $\langle P\rangle$ generated
    by $P$.  These notions now allow to state equivalent criteria for the minimality of permutative
    lifts.

    \begin{thm}\label{t:intro_minimality}
 Suppose that $h\in \cF(\kappa,\ell)$ is a permutative block code and $Y\subseteq \cA_\kappa^{\Z}$ is
 a minimal subshift. Let $X=\eta_h^{-1}(Y)$. Then the following are equivalent:
        \begin{itemize}
        \item[(i)] $(X,\sigma)$ is minimal;
        \item[(ii)] for every word $w\in\cW(Y)$, the return permutation set $\cR(w)$ acts transitively
          on $\cA_\kappa^{\ell-1}$;
        \item[(iii)] there exists a sequence of words $w^{(n)}\in\cW(Y)$ such that
          $|w^{(n)}|\nKonv\infty$ and for all $n\in\N$ the return permutation set $\cR(w^{(n)})$ acts
          transitively on $\cA_\kappa^{\ell-1}$;
        \item[(iv)] there exists a sequence of words $w^{(n)}\in\cW(Y)$ such that $w^{(n+1)}$ starts
          with $w^{(n)}$, $|w^{(n)}|\nKonv\infty$ and for all $n\in\N$ the family
          $\bigcup_{k\geq n} \cR(w^{(k)})$ acts transitively on $\cA_\kappa^{\ell-1}$.
        \end{itemize}
\end{thm}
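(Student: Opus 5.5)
Work throughout in the skew-product model of Theorem~\ref{t:skew_product_representation}: $(X,\sigma)$ is conjugate to $(Y\times\cA^{\ell-1}_\kappa,T)$ with $T^n(y,u)=(\sigma^n y,\Phi_{y_{[0,n-1]}}(u))$, where the cocycle identity comes from Proposition~\ref{p.intro_prefix_permutations}. The cylinders $[w]\times\{u\}$ with $w\in\cW(Y)$ form a basis, and minimality of $Y$ makes every $w$ recur syndetically in every $y\in Y$. I will prove (i)$\Rightarrow$(ii)$\Rightarrow$(iii)$\Rightarrow$(i) and (ii)$\Rightarrow$(iv)$\Rightarrow$(i). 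The implications (ii)$\Rightarrow$(iii) and (ii)$\Rightarrow$(iv) are immediate: take $w^{(n)}=y_{[0,n-1]}$ for a fixed $y\in Y$, and note that a union containing a transitive set is transitive.

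\textbf{(i)$\Rightarrow$(ii).} Fix $w$ and $u,u'$. By minimality of $X$, the point $(y,u)$ with $y\in[w]$ has an orbit entering $[w]\times\{u'\}$, so there is $n\ge0$ with $y_{[n,n+|w|-1]}=w$ and $\Phi_{y_{[0,n-1]}}(u)=u'$. If $n\ge|w|$, write $y_{[0,n-1]}=wv$ with $wvw\in\cW(Y)$, so that $\Phi_{wv}\in\cR(w)$ maps $u$ to $u'$. Overlapping occurrences ($0<n<|w|$) need care. Since $\cR(w)$ is contained in a finite group, $\langle\cR(w)\rangle$ equals the semigroup it generates. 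So it suffices to realise $u\mapsto u'$ as a product of return permutations. I would do this by choosing $n$ with $n\ge|w|$: apply minimality to the open set $T^{-|w|}\bigl([w]\times\{u'\}\bigr)$-type returns, or simply note that the orbit of $(y,u)$ visits $[w]\times\{u'\}$ infinitely often (minimal systems are recurrent to every open set infinitely often). Then take a large $n$, and the prefix $y_{[0,n-1]}$ starts with $w$ and is followed by $w$.

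\textbf{(iii)$\Rightarrow$(i) and (iv)$\Rightarrow$(i).} Let $M\subseteq Y\times\cA^{\ell-1}_\kappa$ be a minimal subset. It projects onto $Y$, and each fibre $M_y$ is nonempty. The plan is to show that $M_y$ is the full fibre for some $y$, and then $M$ is everything because $M$ is closed and the fibre sizes are semicontinuous and constant along orbits. In practice I would show that $M$ contains all of $[w]\times\cA^{\ell-1}_\kappa$ for arbitrarily long $w$, which gives density and hence $M=X$.

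Take $(y,u)\in M$ with $y\in[w]$, where $w=w^{(n)}$. For a return $wv$ (with $wvw$ occurring in $Y$), we want $(y',\Phi_{wv}(u))\in M$ for some $y'\in[w]$. By minimality of $M$, the orbit of $(y,u)$ returns to every neighbourhood of each point of $M$. The issue is that the particular return $wv$ must actually occur after the specific $y$. Here I use closedness: $M$ is closed and invariant, and for each $y\in Y$ the set $M_y$ is nonempty. Consider the set $S_w=\{u:\exists\,y\in[w],\ (y,u)\in M\}$. For a return $wv$, pick $z\in Y$ with $z_{[0,|wvw|-1]}=wvw$. Then $M_z\neq\emptyset$, and $T^{|wv|}$ maps $(z,u)$ to $(\sigma^{|wv|}z,\Phi_{wv}u)$. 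This only shows that $S_w$ intersects images, not that $S_w$ is invariant. \textbf{This is the main obstacle.} To overcome it, refine using the longer words: replacing $w$ by an extension $\tilde w$ gives $\cR(\tilde w)\subseteq$ semigroup generated by $\cR(w)$-type products. The idea is to let $\tilde w$ be long enough that membership in $M$ over $[\tilde w]$ is determined up to a small set. Concretely, by compactness and the $(\ell-1)$-separation in Proposition~\ref{p.intro_inverse_branches}, for $|\tilde w|$ large the set $S_{\tilde w}$ has the property that $(z,u)\in M$ for one $z\in[\tilde w]$ iff for all such $z$ (using that $M$ and its complement are clopen in $X$, since distinct minimal sets of a finite extension are separated by at least a fixed distance). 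Once this holds, $S_{\tilde w}$ is $\cR(\tilde w)$-invariant, so transitivity gives $S_{\tilde w}=\cA^{\ell-1}_\kappa$ and hence $M\supseteq[\tilde w]\times\cA^{\ell-1}_\kappa$.

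Under (iii), choose $w^{(n)}$ long enough. Under (iv), each $\cR(w^{(k)})$ with $k\ge n$ preserves $S_{w^{(k)}}$, and these sets are compatible with $S_{w^{(n)}}$ since $w^{(k)}$ starts with $w^{(n)}$. Hence $S_{w^{(n)}}$ is invariant under the union for large $n$, and transitivity finishes the argument.
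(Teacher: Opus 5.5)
Your proof is correct and follows the paper's route in substance. For (i)$\Rightarrow$(ii) you use recurrence of $(y,u)$ to the cylinder $[w]\times\{u'\}$ at a time $n\geq|w|$. For (iii)$\Rightarrow$(i) you choose $|w^{(n)}|$ so large that the lifted word $\xi^u(w^{(n)})$ decides membership in $M$, which makes your set $S_{w^{(n)}}$ invariant under $\cR(w^{(n)})$; the paper phrases the same step as an induction along a chain of return permutations. For (iv) the paper reduces to (iii) via the simpler inclusion $\bigcup_{k\geq n}\cR(w^{(k)})\subseteq\cR(w^{(n)})$, while your identity $S_{w^{(k)}}=S_{w^{(n)}}$ works equally well.

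The one step you should justify differently is the clopenness of $M$ in $X$. Mutual separation of minimal sets, or the $(\ell-1)$-separation of fibres, does not make $X\setminus M$ closed. What you need is Corollary~\ref{c.number_of_min_components}: by Hedlund's theorem every point of $X$ is almost periodic, so $X\setminus M$ is a finite union of minimal sets and hence closed. Shift-invariance then lets the window of length $|w^{(n)}|+\ell-1$ starting at position $0$ decide membership in $M$. Your remark that $\cR(\tilde w)$ lies in a semigroup generated by ``$\cR(w)$-type products'' is not needed and can be dropped.
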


The proof is given in Section~\ref{Minimality_via_return_permutations}. As we will see below, the last
formulation in (iv) is tailor-made for the application to substitution subshifts. \smallskip

In order to ensure the unique ergodicity of permutative lifts, there are actually two
possibilities. First, we establish a general criterion, similar in spririt to
Theorem~\ref{t:intro_minimality}, based on the concept of return permutations. This will be used in
the study of (permutative lifts of) Toeplitz flows in Section~\ref{ToeplitzFlows}. Since its
formulation requires some additional notation and technical preparations, we refer to
Section~\ref{UniqueErgodicity} for the precise statement. For the special case of primitive
substitution subshifts, it turns out that one can alternatively show that whenever a permutative lift
is minimal, it is linearly recurrent (and hence uniquely ergodic
\cite{Durand2000LinearRecurrence}\cite{Bruin2022SymbolicDynamics}). While a general proof of this fact
will be given in \cite{GohlkeJaegerKang2026PermutativeSubstitutionLifts}, we provide a short direct
argument for the case of the noble means substitutions in
Section~\ref{UniqueErgodicityForSubstitutions}.

In order to illustrate our findings and to demonstrate that the above minimality criteria can be
applied efficiently, we consider a natural two-parameter family of permutative block codes, given by
\[
h_{\kappa,\ell} : \cA_\kappa^{\ell}\to\cA_\kappa\quad , \quad x_0\ldots x_{\ell-1}\mapsto x_{\ell-1}-x_0\bmod \kappa \ , 
\]
with integer parameters $\kappa,\ell\geq 2$. For simplicity, we write $\eta_{\kappa,\ell}$ instead of
$\eta_{h_{\kappa,\ell}}$.  We then apply these to obtain permutative lifts of some classical examples
of substitution subshifts. Recall that the noble means substitution with parameter $p\in\N$ on the
alphabet $\cA_2=\{0,1\}$ is given by the substitution rule $0\mapsto 0^p1,\ 1\mapsto 0$. It includes
the Fibonacci substitution for $p=1$ and is conjugate to the silver mean substitution for $p=2$
\cite[Section 4.4]{BaakeGrimm2013AperiodicOrder}.

\begin{thm}
  \label{t:intro_fibonacci}
  Let $X_{\mathrm{nm},p}$ denote the subshift generated by the noble means substitution with parameter
  $p\in\N$. Then $\eta_{\kappa,\ell}^{-1}(X_\mathrm{Fib})$ is strictly ergodic for all
  $\kappa,\ell\geq 2$.
\end{thm}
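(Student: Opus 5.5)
The plan is to treat minimality and unique ergodicity separately. For minimality I would use criterion (iv) of Theorem~\ref{t:intro_minimality}. For unique ergodicity I would show that the lift is linearly recurrent, as announced in the introduction. Throughout, $\varsigma$ denotes the noble means substitution $0\mapsto 0^p1$, $1\mapsto 0$, and $Y=X_{\mathrm{nm},p}$ (the statement writes $X_{\mathrm{Fib}}$, but the same argument covers every $p$).

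\emph{Step 1: explicit prefix permutations.} For $h=h_{\kappa,\ell}$, a lift $x$ of a word $w$ satisfies $x_{n+\ell-1}=x_n+w_n \bmod \kappa$. Identify $\cA_\kappa^{\ell-1}$ with $(\Z/\kappa\Z)^{\ell-1}$ and let $S$ be the cyclic coordinate shift. Then each letter acts by $\Phi_a(u)=Su+a\,e_{\ell-2}$, an affine map. Consequently
\[
 \Phi_w(u)=S^{|w|}u+c(w),
\]
where the $j$-th entry of $c(w)$ is, up to the cyclic relabelling, the sum of the letters $w_i$ over all $i\equiv j \bmod (\ell-1)$. So every group $\langle P\rangle$ we meet is a subgroup of the finite affine group $\{u\mapsto S^ru+c\}$. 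It acts transitively as soon as it contains all translations by the unit vectors, or, more weakly, as soon as its translation subgroup together with the rotation parts reaches every point.

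\emph{Step 2: return words of the nested words.} Take $w^{(n)}=\varsigma^n(0)$; these are nested because $0$ is a prefix of $\varsigma(0)$. In $Y$ the return words to $0$ are $0$ and $01$, and applying $\varsigma^n$ shows that $\varsigma^n(0)$ and $\varsigma^n(01)$ are return words to $w^{(n)}$. Since $\varsigma^n(1)=\varsigma^{n-1}(0)$, the group generated by $\bigcup_{k\ge n}\cR(w^{(k)})$ contains every $\Phi_{\varsigma^{k}(0)}$ with $k\ge n-1$. The lengths $L_k=|\varsigma^k(0)|$ satisfy $L_{k+1}=pL_k+L_{k-1}$, and the vectors $c(\varsigma^k(0))$ obey the same kind of linear recursion, twisted by $S^{L_k}$. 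All these sequences are eventually periodic modulo $\kappa$ and modulo $\ell-1$. Hence it suffices to check transitivity along one full period of these residues, starting from an arbitrary $n$.

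\emph{Step 3 (main obstacle): the arithmetic.} We must show that the generated affine group acts transitively for all $\kappa,\ell\ge 2$. I would first produce pure translations. If $\Phi_v$ and $\Phi_w$ have rotation parts $S^r$ and $S^{s}$, then the commutator $\Phi_v\Phi_w\Phi_v^{-1}\Phi_w^{-1}$ is the translation by $(I-S^s)c(v)-(I-S^r)c(w)$. Powers $\Phi_w^{m}$ with $S^{m|w|}=I$ are translations by $\sum_{i<m}S^{i|w|}c(w)$. Consecutive lengths $L_k,L_{k+1}$ are coprime, and the letter sums of $\varsigma^k(0)$ grow according to the Fibonacci-type recursion. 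Using these two facts, I would try to show that these translations, together with their images under conjugation by $S$, span $(\Z/\kappa\Z)^{\ell-1}$. If the full translation group cannot be reached, I would instead show directly that the orbit of $0$ is everything, using that $\gcd(L_k,L_{k+1})=1$ makes all rotations available. This is the step where a case analysis over the residues of $L_k$ modulo $\ell-1$ may be needed.

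\emph{Step 4: unique ergodicity.} Once minimality is known, I would prove linear recurrence of $X=\eta_{\kappa,\ell}^{-1}(Y)$ via the skew product of Theorem~\ref{t:skew_product_representation}. Fix a word $x$ of $X$ whose projection contains $\varsigma^n(0)$, with prescribed state $u$. Its occurrences are the occurrences of $\varsigma^n(0)$ in $Y$ at which the state equals $u$. Between consecutive occurrences of $\varsigma^n(0)$ the state changes by a return permutation of the form $\Phi_{\varsigma^n(0)}$ or $\Phi_{\varsigma^n(01)}$. By eventual periodicity from Step 2, these permutations come from a finite set independent of $n$. So the self-similar structure of $Y$ — each $\varsigma^{n+N}(0)$ is a concatenation of boundedly many blocks $\varsigma^n(0),\varsigma^n(1)$ — combined with the transitivity from Step 3 at a fixed level $N$ should force state $u$ to reappear within a distance of at most $C\,L_n$, with $C$ independent of $n$. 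Linear recurrence of $Y$ then gives linear recurrence of $X$, and hence unique ergodicity by \cite{Durand2000LinearRecurrence}.
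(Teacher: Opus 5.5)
Your overall architecture (criterion (iv) on the nested words $\varsigma^n(0)$, then linear recurrence) matches the paper. However, Step~3, where the real work lies, is not carried out. You list tools (commutators, powers $\Phi_w^m$, coprimality of $L_k,L_{k+1}$) and anticipate a case analysis, but you never show that the group generated by the tail $\{\Phi_{\varsigma^k(0)}\mid k\ge n-1\}$ is transitive. It is not evident that these tools suffice. Commutators only produce translations with coordinate sum $0$, since $S$ permutes coordinates. Coprimality only makes the rotation parts generate $\langle S\rangle$, and that is compatible with a non-transitive group such as $\langle S\rangle$ itself.

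The source of the trouble is Step~2, where you settle for \emph{eventual} periodicity and are left with an unknown periodic tail. The missing observation is that the periodicity is \emph{pure}. The data $(L_{k-1}\bmod(\ell-1),\ L_k\bmod(\ell-1),\ c(\varsigma^{k-1}(0)),\ c(\varsigma^k(0)))$ live in a finite set and evolve by a bijection, because the lowest-index word occurs only once in $\varsigma^{k+1}(0)=\varsigma^k(0)^p\varsigma^{k-1}(0)$. Concretely, $L_{k-1}=L_{k+1}-pL_k$ and $c(\varsigma^{k-1}(0))=c(\varsigma^{k+1}(0))-S^{L_{k-1}}\sum_{i<p}S^{iL_k}c(\varsigma^k(0))$ are recovered from the next state. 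Hence $(\Phi_{\varsigma^k(0)})_{k\ge 0}$ is periodic from $k=0$ on. So every tail contains $\Phi_0=S$ and $\Phi_{0^p1}$, where $\Phi_{0^p1}(u)=S^{p+1}u+e_{\ell-2}$. Then $\Phi_{0^p1}\circ\Phi_0^{-(p+1)}$ is the translation by $e_{\ell-2}$, and its conjugates by powers of $S$ are the translations by all unit vectors. Transitivity follows for all $p,\kappa,\ell$ with no case analysis; this is how the paper concludes.

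Pure periodicity is also what Step~4 lacks. That the relevant permutations ``come from a finite set independent of $n$'' is automatic, since the permutation group of $\cA_\kappa^{\ell-1}$ is finite. Transitivity of a generated group also does not tell you which states are actually visited along the specific block sequence of $\varsigma^{n+N}(0)$. What you need is that the \emph{ordered} pattern of states repeats from level to level. Extend the recursion by $\varsigma^{-1}(0)=1$ and let $q$ be the period. Then $\Phi_{\varsigma^n(0)}=\Phi_0$ and $\Phi_{\varsigma^n(1)}=\Phi_1$, hence $\Phi_{\varsigma^n(w)}=\Phi_w$ for all words $w$ and all $n\in q\Z$. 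Minimality of $X$ gives $k$ with $\xi^{u'}(0)\triangleleft\xi^u(\varsigma^k(0))$ for all $u,u'$. Applying $\varsigma^n$ with $n\in q\Z$ turns this into $\xi^{u'}(\varsigma^n(0))\triangleleft\xi^u(\varsigma^{n+k}(0))$ for all $u,u'$. Together with the linear recurrence of $Y$, this yields linear recurrence of $X$ with a constant independent of $n$. Without this invariance, or an equivalent argument (e.g.\ proving minimality of each of the finitely many derived skew products over $Y$ with fibre maps $a\mapsto\Phi_{\varsigma^n(a)}$), your ``should force'' is not yet a proof.
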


The statement remains valid when the noble means substitution is replaced by the the Tribonacci
substitution (where $\kappa\geq 3$ is required for Tribonacci). The details are given in
Sections~\ref{SubstitutionApplications} (for minimality) and \ref{UniqueErgodicityForSubstitutions}
(for unique ergodicity).  A very interesting case is the period doubling substitution, induced by the
substitution rule $\Gamma_{\mathrm{pd}}:\cA_2\mapsto\cA_2^*$, $1\mapsto 10$, $0\mapsto 11$. In this
case, the minimality of the lift depends on the values of $\kappa$ and $\ell$.

\begin{thm}
  \label{t:intro_period_doubling}
  Let $X_\mathrm{pd}$ denote the period doubling subshift. Then the lift
  $X=\eta_{\kappa,\ell}^{-1}(X_\mathrm{pd})$ is strictly ergodic if $\ell\geq 2$ is even and if
  $\ell=3$ and $\kappa$ is odd. In all other cases, $X$ is non-minimal.
\end{thm}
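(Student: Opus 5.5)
The plan is to compute the prefix permutations of $h_{\kappa,\ell}$ explicitly and then apply Theorem~\ref{t:intro_minimality} along the chain $w^{(n)}=\Gamma_{\mathrm{pd}}^n(1)$.

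\emph{Prefix permutations.} Since $h_{\kappa,\ell}(x_0\ldots x_{\ell-1})=x_{\ell-1}-x_0$, a lift of the letter $a$ with prefix $u=u_0\ldots u_{\ell-2}$ has next letter $u_0+a$. Hence, identifying $\cA_\kappa^{\ell-1}$ with $(\Z/\kappa\Z)^{\ell-1}$ and writing $C$ for the cyclic left shift of coordinates,
\[
\Phi_a(u)\ =\ Cu+a\,e_{\ell-2}\ .
\]
By Proposition~\ref{p.intro_prefix_permutations}, every $\Phi_w$ is affine, of the form $\Phi_w(u)=C^{|w|}u+t(w)$. The coordinate $t(w)_i$ is the sum of the letters of $w$ in one residue class of positions modulo $\ell-1$; which class goes with which coordinate is determined by $|w|$ and $i$, and I will fix this bookkeeping once. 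The group generated by such affine maps acts transitively if and only if the orbit of $0$ is all of $(\Z/\kappa\Z)^{\ell-1}$. This reduces transitivity to checking whether certain translation vectors, together with their images under powers of $C$, generate $(\Z/\kappa\Z)^{\ell-1}$.

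\emph{Return words.} The word $00$ does not occur in $X_\mathrm{pd}$, so the return words to $1$ are $1$ and $10$. Therefore $\Gamma^n(1)$ has return words $\Gamma^n(1)$ and $\Gamma^n(10)$, of lengths $2^n$ and $2^{n+1}$. Since $\Gamma^{n+1}(1)$ starts with $\Gamma^n(1)$, criterion (iv) of Theorem~\ref{t:intro_minimality} applies to this chain. I will compute $t(\Gamma^n(1))$ and $t(\Gamma^n(0))$ recursively from $\Gamma^{n+1}(a)=\Gamma^n(1)\Gamma^n(\bar a)$-type identities; note that $\Gamma^n(1)$ and $\Gamma^n(0)$ differ only in the last letter.

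\emph{Case $\ell$ even.} Here $\ell-1$ is odd, so $C^{2^n}$ generates the full cyclic group $\langle C\rangle$. One then needs a translation difference, for instance $t(\Gamma^n(10))-2t(\Gamma^n(1))$-type combinations, equal to a single unit vector. The $C$-orbit of a unit vector generates everything, and this should follow from the "last letter differs" observation.

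\emph{Case $\ell=3$.} Here $C^{2^n}=\Id$ for $n\geq 1$, so only the translations matter. For example, $t(1011)=(2,1)$ and $t(10111010)=(4,1)$, whose difference is $(2,0)$. The pattern persists under the recursion, so the generated group is spanned by $(2,0)$ and $(0,1)$ (or similar). This is everything if and only if $\kappa$ is odd.

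\emph{Non-minimal cases.} For $\ell-1$ even, I will show that (ii) of Theorem~\ref{t:intro_minimality} fails for a suitable word such as $w=\Gamma(1)$. All return lengths are even, so $C^{\mathrm{even}}$ preserves the parity classes of coordinates. I will exhibit a proper affine invariant (a coset of a subgroup) that contains the orbit of $0$. When $\ell=3$ and $\kappa$ is even, this is the first coordinate mod $2$. When $\ell-1\geq 4$ is even, I expect a functional such as an alternating sum over coordinates to be preserved, possibly modulo a divisor of $\kappa$. Finding the right invariant for general even $\ell-1$ and all $\kappa$ is where I expect the main difficulty, together with the recursion for the translation vectors.

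\emph{Unique ergodicity.} In the minimal cases, I would argue that the skew product of Theorem~\ref{t:skew_product_representation} over a constant-length substitution is again generated by a (constant-length) substitution on the alphabet $\cA_2\times\cA_\kappa^{\ell-1}$, obtained by tracking prefixes through $\Gamma$. Minimality then gives primitivity, hence linear recurrence and unique ergodicity \cite{Durand2000LinearRecurrence}. Failing that, I would use the general return-permutation criterion of Section~\ref{UniqueErgodicity}.
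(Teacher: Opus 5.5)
\paragraph{Cases that work.} Your minimality arguments for $\ell$ even and for $\ell=3$ are correct and follow a different, more uniform route than the paper.
\begin{itemize}
\item $\ell$ even. From $\tau^{(n+1)}=\tau^{(n)}\vartheta^{(n)}$ and the fact that $\tau^{(n)},\vartheta^{(n)}$ differ only in the last letter, you get exactly $\Phi_{\tau^{(n+1)}}\circ\Phi_{\tau^{(n)}}^{-2}=(u\mapsto u\pm e_{\ell-2})$. Conjugating by $\Phi_{\tau^{(n)}}$ gives translations by $C^{2^nj}e_{\ell-2}$, i.e.\ by all unit vectors, since $\ell-1$ is odd. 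The paper instead works with $\breve\tau^{(k_n)}$ and two return words of length $(\ell-1)2^{k_n}$ that differ in a single letter.
\item $\ell=3$. $\langle\cR(\tau^{(n)})\rangle$ is the translation group generated by $(2^{n-1},\ast)$ and $(0,1)$. State explicitly that $t^0(\tau^{(n)})=2^{n-1}$, because every even position carries a $1$. This settles both parities of $\kappa$ at once. The paper uses periodicity via $\det A=4$ for odd $\kappa$ and a separate argument on the lifts for even $\kappa$.
\end{itemize}

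\paragraph{The gap: $\ell\geq5$ odd.} You leave this case open, and the kind of invariant you look for does not exist in general. An invariant coset containing the orbit of $0$ must be a proper subgroup containing that orbit. Take $\ell=7$, $\kappa=3$ and any $n\geq1$:
\begin{itemize}
\item $\langle\cR(\tau^{(n)})\rangle$ contains all translations supported on the odd coordinates, because $(\ell-1)/2=3$ is odd;
\item the even projection of the orbit of $0$ contains $(0,0,1)$, $(0,1,1)$ and $(1,1,1)$.
\end{itemize}
So the orbit generates $\Z_3^6$, and no linear functional modulo any divisor of $\kappa$ is preserved. Also, $w=\Gamma(1)$ is not a suitable word: $\cR(10)$ acts transitively both for $\ell=3$ (translations by $(1,0)$ and $(2,1)$) and for $(\ell,\kappa)=(5,2)$.

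The missing idea is a counting obstruction. Put $m=(\ell-1)/2$.
\begin{itemize}
\item Every return word $w$ of $\tau^{(n)}$, $n\geq1$, starts at an even position and has even length.
\item All even positions of $y\in X_{\mathrm{pd}}$ carry the letter $1$.
\item Hence $\Phi_w$ acts on the $m$ even coordinates as $E^{|w|/2}$, where $E(u_0,u_2,\ldots,u_{\ell-3})=(u_2,\ldots,u_{\ell-3},u_0+1)$.
\item Since $E^m$ is translation by $(1,\ldots,1)$, $E$ has order $m\kappa$.
\end{itemize}
Therefore the even projection of every orbit has at most $m\kappa<\kappa^m$ points, unless $m=\kappa=2$. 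In that exceptional case take $w=\tau^{(2)}=1011$. Its returns have length divisible by $4$, so only powers of $E^2=(u\mapsto u+(1,1))$ occur, and the orbits have $2$ points. This is the return-permutation form of the paper's comparison of $\pi_{\mathrm{even}}(\xi^{0\ldots0}(y))$ and $\pi_{\mathrm{even}}(\xi^{10\ldots0}(y))$.

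\paragraph{Unique ergodicity.} This is not established. The paper does not prove it here either; it defers to the linear-recurrence result of \cite{GohlkeJaegerKang2026PermutativeSubstitutionLifts}.

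Your claim that tracking prefixes through $\Gamma$ yields a substitution on $\cA_2\times\cA_\kappa^{\ell-1}$ is false as stated. A rule compatible with the skew product must send $(a,u)$ to the lift of $\Gamma(a)$ with some prefix $F_a(u)$. Admissibility of $10$, $11$ and $01$ then forces $F_0=F_1=F$ with
\[
F\circ\Phi_1=\Phi_{10}\circ F \quad\text{and}\quad F\circ\Phi_0=\Phi_{11}\circ F .
\]
For $\ell=2$, $\kappa=3$ these read $F(u+1)=F(u)+1$ and $F(u)=F(u)+2$, which is impossible. You would have to pass to a power $\Gamma^N$, using eventual periodicity of $n\mapsto(\Phi_{\tau^{(n)}},\Phi_{\vartheta^{(n)}})$. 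Alternatively, run the linear recurrence argument of Section~\ref{UniqueErgodicityForSubstitutions}.
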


We note that this statement covers a very classical example, which is the factor relation between the
period doubling subshift $X_\mathrm{pd}$ and the Thue-Morse subshift $X_\mathrm{TM}$. In fact, the
latter coincides with the lift of $X_\mathrm{pd}$ under the sliding block code $\eta_{2,2}$. Moreover,
lifts of Toeplitz subshifts via $\eta_{2,2}$ have been studied in the context of $\Z_2$-extensions and
their spectral theory (e.g.\
\cite{Lemanczyk1987ErgodicZ2Extensions,Lemanczyk1988ToeplitzZ2Extensions,FilipowiczKwiatkowskiLemanczyk1988ApproximationOfZ2Cocycles}).
We also note that $\eta_{2,2}^2=\eta_{2,3}$. Hence, the lift of the Thue-Morse subshift under
$\eta_{2,2}$ conincides with $\eta^{-1}_{2,3}\lb X_\mathrm{pd}\rb$ and is therefore not minimal by
Theorem~\ref{t:intro_fibonacci}. An easier example of a subshift with a non-minimal lift under
$\eta_{2,2}$ is given by the fixed point $0^\Z$, with preimages $0^\Z$ and~$1^\Z$.

The prefix permutations for $h_{2,2}$ are particularly easy to determine: we have
\[
  \Phi_w^{h_{2,2}} : \cA_2\to\cA_2 \quad , \quad a \mapsto a + \sum_{j=0}^{|w|-1} w_j \ \bmod 2 \ .
\]
This entails the following simplified minimality criterion for the special case of $h_{2,2}$.

\begin{cor} \label{c.intro_minimality_criterion} Let $Y\ssq \cA_2^\Z$ be a minimal subshift. Then
  $X=\eta_{2,2}^{-1}(Y)$ is minimal if and only if there exists sequences of words
  $w^{(n)},v^{(n)}\in\cW(Y)$ such that $|w^{(n)}|\nKonv \infty$, $w^{(n)}v^{(n)}w^{(n)}\in\cW(Y)$ and
  $\sum_{j=0}^{|w^{(n)}v^{(n)}|-1} (w^{(n)}v^{(n)})_j$ is odd for all $n\in\N$.
\end{cor}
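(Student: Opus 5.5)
We derive the corollary from Theorem~\ref{t:intro_minimality}, using the explicit form of the prefix permutations for $h_{2,2}$. Here $\ell=2$, so the prefix set is $\cA_2^{\ell-1}=\cA_2=\{0,1\}$. By the displayed formula, $\Phi_w^{h_{2,2}}$ is the translation by $s(w)=\sum_{j=0}^{|w|-1}w_j \bmod 2$. Hence every prefix permutation is either the identity or the transposition $\tau=(0\,1)$.

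Consequently, for any word $w\in\cW(Y)$ the group $\langle\cR(w)\rangle$ is either trivial or equal to $\{\Id,\tau\}$. Only the latter acts transitively on $\cA_2$. So $\cR(w)$ acts transitively exactly when it contains $\tau$, that is, when there is some $v$ with $wvw\in\cW(Y)$ and $s(wv)$ odd. (This step uses the multiplicativity $\Phi_{wv}=\Phi_v\circ\Phi_w$ from Proposition~\ref{p.intro_prefix_permutations} only implicitly, through the closed formula.) The whole argument therefore reduces to the elementary fact that a subgroup of $\Z/2\Z$ acting on two points is transitive if and only if it is nontrivial.

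For the \emph{if} direction, suppose sequences $w^{(n)},v^{(n)}$ as in the corollary are given. Then $\Phi_{w^{(n)}v^{(n)}}=\tau\in\cR(w^{(n)})$, so $\cR(w^{(n)})$ acts transitively for every $n$. Since $|w^{(n)}|\to\infty$, condition (iii) of Theorem~\ref{t:intro_minimality} holds, and therefore $X$ is minimal.

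For the \emph{only if} direction, suppose $X$ is minimal. By (i)$\Rightarrow$(ii) of Theorem~\ref{t:intro_minimality}, $\cR(w)$ acts transitively for every $w\in\cW(Y)$. We need words in $\cW(Y)$ of arbitrarily large length. If $Y$ is infinite this is automatic. If $Y$ is finite, it is still a nonempty subshift, so its sequences have words of every length. In either case, choose $w^{(n)}\in\cW(Y)$ with $|w^{(n)}|=n$. Transitivity of $\cR(w^{(n)})$ means it contains $\tau$, which yields a $v^{(n)}$ with $w^{(n)}v^{(n)}w^{(n)}\in\cW(Y)$ and $s(w^{(n)}v^{(n)})$ odd. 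Finally, $w^{(n)}v^{(n)}\in\cW(Y)$ is a factor of a word of $Y$, which gives $v^{(n)}\in\cW(Y)$ as required.
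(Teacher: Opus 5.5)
Your proof is correct. It is more direct than the paper's, although both rest on Theorem~\ref{t:intro_minimality}.

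The paper does not apply the theorem straight away. It obtains the corollary as the case $\kappa=2$ of Proposition~\ref{p.h2lift_substitution_minimality}, whose hypothesis only asks that every tail $\{t_k\mid k\geq N\}$ generate $\Z_\kappa$. Since no single $\cR(w^{(n)})$ need then be transitive, the ``if'' direction there needs two extra ingredients:
\begin{itemize}
\item commutativity of the rotations $\Phi_w$, which gives $\cR(w)\ssq\cR(\tilde w)$ whenever $\tilde w\triangleleft w$;
\item minimality of $Y$, which gives $w^{(n)}\triangleleft w^{(k)}$ for all large $k$, so that $\cR(w^{(n)})$ contains the rotations by all $t_k$ with $k\geq N$.
\end{itemize}
For the ``only if'' direction, the paper re-runs the argument of (i)$\Rightarrow$(ii) concretely. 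It takes $w^{(n)}=y_{[0,n]}$ for a fixed $y\in Y$ and locates $\xi^1(w^{(n)})$ inside $\xi^0(y)$, which gives $t_n=1$ for every $n$.

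You instead observe that with $\kappa=2$ and every sum odd, each $\cR(w^{(n)})$ already contains the transposition. So (iii)$\Rightarrow$(i) applies immediately, and for the converse you cite (i)$\Rightarrow$(ii) with arbitrary words of length $n$.

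Your route needs neither the subword-inclusion step nor the commutativity behind it. The paper's route gives a more general criterion, valid for every $\kappa$ and requiring only that the tails of $(t_k)$ generate $\Z_\kappa$.
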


A more detailed discussion is given in Section~\ref{ExampleFamily}.  Finally, we study the interplay
between permutative sliding block codes and Toeplitz subshifts in Section~\ref{ToeplitzFlows}. The
main results can be summarised as follows.

\begin{thm}\label{t.intro_toeplitz_results}
  \alphlist
\item[(a)] For any permutative block code $h$, there exists a regular Toeplitz subshift $Y$ such that $\eta_h^{-1}(Y)$
  is strictly ergodic.
\item[(b)] For any permutative block code $h$, there exists a regular Toeplitz subshift $Y$ such that $\eta^{-1}_h(Y)$
  is not minimal.
\item[(c)]  For any Toeplitz subshift $Y$, there exists a permutative block code $h$ such that $\eta_h^{-1}(Y)$
  is not minimal.
\item[(d)] There exists a regular Toeplitz subshift $Y$ and a permutative block code $h$ such that
  $\eta_h^{-1}(Y)$ is minimal, but not uniquely ergodic.\listend
\end{thm}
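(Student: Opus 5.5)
We will prove the four parts by constructing regular Toeplitz subshifts from periodic skeletons, and we will track return permutations through the skew-product representation of Theorem~\ref{t:skew_product_representation}. We fix the data as follows: $Y$ is the orbit closure of a Toeplitz sequence $y$ built along a period structure $p_1\mid p_2\mid\cdots$. At stage $n$, $y$ is specified on a set $\Per_{p_n}$ of residues mod $p_n$, and the density of the still-undetermined positions tends to $0$, which gives regularity and hence unique ergodicity of $Y$. At each stage the skeleton $B_n$ (the length-$p_n$ block, with holes) is filled in by letters. This filling induces a prefix permutation $\Phi_{B_n}$, and $\Phi_{B_n}$ depends on how the holes are filled. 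Because return words of long blocks in $Y$ are essentially concatenations of filled $B_n$'s, $\cR(w)$ for a word $w$ of length about $p_n$ is generated, up to conjugation, by the permutations of the filled blocks.

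For (a), we will choose the fillings so that the permutations $\Phi_{B_n^{(a)}}$ of the different fillings of $B_n$ generate a transitive group. Since $h$ is permutative, changing one hole letter changes the output permutation, so all of $\Per$ generated by single-letter changes is reachable. Minimality of $X$ then follows from Theorem~\ref{t:intro_minimality}(iii). For unique ergodicity we will invoke the criterion of Section~\ref{UniqueErgodicity}. Concretely, we aim to show that ergodic averages of cylinder functions over long words converge uniformly in the fibre coordinate. The mechanism is that the filled $B_n$-blocks occur with frequencies making the induced random walk on $\cA^{\ell-1}$ equidistribute: we arrange that each stage uses every relevant permutation with positive proportion and that the resulting group is also aperiodic.

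For (b), we take all holes filled consistently, so that $\Phi_{B_n}$ takes a value in a fixed non-transitive subgroup. The easiest choice is to make every block's permutation the identity, by appending a correcting letter chosen at a fixed periodic position via permutativity. Then every return permutation lies in a stabiliser and (ii) fails.

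For (c), given $Y$, we choose $h$ with $\ell$ larger than some period $p_n$, in the spirit of the example $\eta_{2,3}$. The aim is that the prefix permutation of each length-$p_n$ periodic pattern becomes trivial on a suitable invariant partition; the cleanest route is $h=h_{\kappa,\ell}$ with $\ell-1=p_n$, whose permutations respect residues.

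For (d), we will mix (a) and (b): fillings generate a transitive group, giving minimality, but the frequencies of the different permutations vanish along the stages (summable proportions). Two distinct invariant measures then arise as limits on sub-skew-products where the "rare" transitive permutations have measure zero. The main obstacle is this part, namely quantitatively controlling the frequencies so that Birkhoff averages fail to converge uniformly while every long word still sees a transitive set of return permutations. We will first try doubling periods with a single exceptional position per stage.
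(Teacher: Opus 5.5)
Your outline has the right ingredients (Toeplitz skeletons with holes of fixed length, return permutations, Theorems~\ref{t:intro_minimality} and~\ref{t:unique_ergodicity}), but the decisive step of (a) is missing.

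Theorem~\ref{t:unique_ergodicity} requires that, for \emph{every} entry prefix $u$, the prefixes of the lift over the occurrences of $w$ in long windows become equidistributed, uniformly in the window. Take $w^{(k+1)}=w^{(k)}u^{(k,0)}\cdots w^{(k)}u^{(k,q_k-2)}w^{(k)}$. The prefix over the $i$-th copy of $w^{(k)}$ is then $g^{(k,i)}(u)$, where $g^{(k,i)}=\Phi_{w^{(k)}u^{(k,0)}\cdots w^{(k)}u^{(k,i-1)}}$. So what must be controlled is the empirical distribution of the \emph{states} of this deterministic walk. That distribution depends on the order of the fillings, not only on the frequencies of the steps.

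Your conditions (every permutation used with positive proportion at each stage, aperiodic group) do not give this. For $h_{2,2}$ with steps $\Id$ and the flip, both used at every stage, placing the flips in adjacent pairs keeps the state at $\Id$ on all but one copy per block. This is exactly the paper's minimal but non-uniquely ergodic example in (d). Even with proportions bounded below, you would still owe a contraction estimate that is uniform across levels.

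The missing idea is the Euler circuit. Colour the left Cayley graph of $G_k=\langle S^{(k)}\rangle$, $S^{(k)}=\{\Phi_{w^{(k)}u}\mid u\in\cA_\kappa^{\ell-1}\}$, by $u$, and take the colours along an Euler circuit (Theorem~\ref{t:Euler-circuit}) as the fillings. Then $g^{(k,0)},\ldots,g^{(k,q_k-1)}$ run through every element of $G_k$ exactly $\kappa^{\ell-1}$ times. Since $S^{(k)}$ acts transitively by Lemma~\ref{l:independence_lemma}, orbit--stabiliser gives $\#\{i\mid g^{(k,i)}(v)=v'\}=\#G_k$ for all $v,v'$. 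This is exact balance inside every aligned $p_{k+1}$-block, independent of the entry prefix. Only occurrences straddling a filler and the window ends contribute, a proportion of order $(|w|+\ell)/p_k$ plus boundary terms.

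A randomised choice could also work: by Lemma~\ref{l:independence_lemma}, uniformly random fillings make the successive prefixes i.i.d.\ uniform. But that needs $q_k\to\infty$ and a quantitative large-deviation plus Borel--Cantelli argument, which you do not supply. Note also that transitivity requires $\ell-1$ free letters per hole, not single-letter changes.

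For (d), ``summable proportions'' is the paper's mechanism, but again what must be rare is the set of copies of $w^{(k)}$ whose prefix is moved. The paper uses $h_{2,2}$ and $w^{(k+1)}=(w^{(k)}0)^{q_k-3}w^{(k)}1w^{(k)}1w^{(k)}$, where each $w^{(k)}$ has an even number of $1$s. The flip then lies in $\cR(w^{(k)})$, giving minimality, while only one of the $q_k$ copies is flipped. Hence $D^\ap(w^{(1)},w^{(k)})$ is multiplied by about $(q_k-2)/q_k$ per level and stays bounded away from $0$ when $\sum_k 1/q_k<\infty$.

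Your fallback with doubling periods cannot work. With $q_k=2$ there is one hole per period at each level, so there is no room for rare fillings. For $h_{2,2}$, a level whose filling acts as the flip puts the two copies of $w^{(k)}$ in $w^{(k+1)}=w^{(k)}aw^{(k)}$ into opposite fibres, balancing all subwords of $w^{(k)}$ exactly. Aperiodicity of $\xi$ already forces such levels to occur infinitely often.

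Parts (b) and (c) point the right way but need repair.
In (b), one correcting letter cannot make $\Phi_B=\Id$ in general. What Lemma~\ref{l:independence_lemma} gives with $\ell-1$ correcting letters, and what suffices, is a common fixed prefix $u$. You also need:
\begin{itemize}
\item two different fillings (the paper alternates $0u^{(0)}$ and $1u^{(1)}$), since identical fillings produce a periodic sequence;
\item a test word such as $1^{2\ell}0^{2\ell}$ that occurs only at aligned positions, so that its return words really are concatenations of blocks.
\end{itemize}
In (c), the untwisted $h_{\kappa,\ell}$ with $\ell-1=p_n$ gives an invariant directly only if the symbol $0$ sits at some $(\ell-1)$-periodic position. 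The paper instead uses $w_{\ell-1}-w_0+c$ with $c=y_n$, $n\in\Per(y,\ell-1)$. Then $\Per(y,\ell-1,c)$ becomes the set of $(\ell-1)$-periodic positions of every lift and carries the values of the initial prefix. By Lemma~\ref{l.toeplitz_essential_period}, this separates the orbit closures of $\xi^{a\ldots a}(y)$ and $\xi^{b\ldots b}(y)$ for $a\neq b$.
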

These assertions are restated and proven below as (a) Theorem~\ref{t:unique_ergodic_lift}, (b)
Proposition~\ref{p.h_fixed_non-minimal_Toeplitz}, (c) Theorem~\ref{t.Toeplitz_fixed_h_non-minimal} and
(d) Theorem~\ref{t.Toeplitz_with_minimal_but_non-uniquely_ergodic_lift}. In all cases, the block code
and the Toeplitz subshift share the same alphabet.  One question which we leave open here is the
following:

\begin{question}
  Given a minimal/strictly ergodic (Toeplitz) subshift $Y\ssq\cA_\kappa^\Z$, does there exist a
  permutative sliding block code $h\in\cF(\kappa,\ell)$ of length $\ell\geq 2$ such that
  $X=\eta_h^{-1}(Y)$ is minimal/strictly ergodic?
\end{question}

\medskip

The article is organised as follows: in Section~\ref{Prelim}, we provide the required background
knowledge on symbolic and topological dynamics and sliding block codes. In
Section~\ref{InverseBranches}, we consider inverse branches and induced prefix permutations of
permutative sliding block codes and provide a proof of the skew product representation in
Theorem~\ref{t:skew_product_representation}.  In Section~\ref{Minimality}, we prove the equivalence of
the minimality criteria in Theorem~\ref{t:intro_minimality} and apply these to the lifts of
substitution systems under the parameter family $\eta_{\kappa,\ell}$ introduced above. The question of
unique ergodicity is treated in Section~\ref{UniqueErgodicity}. Finally, in
Section~\ref{ToeplitzFlows} we take a look at Toeplitz flows and their lifting properties in order to
prove the assertions stated in Theorem~\ref{t.intro_toeplitz_results}.  \medskip

{\bfseries Acknowledgements.}\quad We would like to thank Ela Krawczyk and Henk Bruin as well as
Philipp Gohlke for helpful discussions in general and the suggestion to consider linear recurrence in
our context in particular. This work was supported by the DFG-project 552775134 {\em `Finite
  topomorphic extensions of equicontinuous systems'} as well as the Chinese Scholarship Council (Grant
No.202308080192).

%%%%%%%%%%%%%%%%%%%%d%%%%%%%%%%%%%%%%%%%%%%%%%%%%%%%%%%%%%%%%%%%%%%%%%%%%%%%%%%%%%%%%%%%%
%%%%%%%%%%%%%%%%%%%%%%%%%%%%%%%%%%%%%%%%%%%%%%%%%%%%%%%%%%%%%%%%%%%%%%%%%%%%%%%%%%%%%%%%
%%%%%%%%%%%%%%%%%%%%%%%%%%%%%%%%%%%%%%%%%%%%%%%%%%%%%%%%%%%%%%%%%%%%%%%%%%%%%%%%%%%%%%%%

  \section{Preliminaries} \label{Prelim}

  \subsection{Shift spaces} \label{Prelim:shift_spaces} As before, given $\kappa\in\N$ we let
  $\cA_\kappa=\{0\ld \kappa-1\}$ and $\cA_\kappa^*=\nocup \cA_\kappa^n$, where $\cA_\kappa^0=\{e\}$,
  where $e$ is the empty word. We further define $|w|$ as the length of the word $w\in\cA_\kappa^*$,
  so that $w=w_0\ldots w_{|w|-1}$. Given two words $w,v$ with $|w|\leq |v|$, we write $w\triangleleft v$
  if there exists $k\in\{0\ld |v|-|w|\}$ such that $w_j=v_{k+j}$ for all $j=0\ld |w|-1$ and say {\em
    $w$ is contained in $v$} in this case. The same notation and terminology is used when $v$ is an
  infinite (one- or two-sided) sequence. The concatenation of two words $w,v\in\cA_\kappa^*$ is
  written as $wv$, and the same notation is used if $v$ is an infinite one-sided sequence. The
  concatenation of $p$ copies of a finite word $w$ will be denoted by $w^p$. Given $k<m\in\Z$ and
  $x\in\cA_\kappa^\Z$, we write $x_{[k,m]}=x_k\ldots x_m$. Analogous notation will be used for
  finite words in $\cA_\kappa^*$ or one-sided sequences in $\cA_\kappa^\N$.  Given any subset
  $Y\ssq \cA_\kappa^\Z$, we let
  \[
    \cW(Y) \ = \ \{w\in\cA_\kappa^* \mid \exists y\in Y: w
    \triangleleft y\} \ .
  \]
  and $\cW_n(Y)=\cW(Y)\cap \cA_\kappa^n$ for $n\in\N$. If $Y=\{\xi\}$, we write $\cW(\xi)$ instead of
  $\cW(\{\xi\})$. 

  We equip the shift space $\cA_\kappa^\Z$ with the metric
  $d(x,y) = e^{-\min\{|n|\mid n\in\Z,\ x_n\neq y_n\}}$, which makes it a compact and
  totally disconnected metric space. Given a finite word $w \in \cA_\kappa^{\{-n\ldots n\}}$, we
  define the corresponding {\em cylinder set of level $n$} as
  \[
       [w]_n \ = \ \left\{x\in\cA_\kappa^\Z\mid x_k=w_k \textrm{ for all } k=-n\ld n\right\}. 
  \]
  Similarly, given $x\in\cA_\kappa^\Z$, we let $[x]_n=[x_{-n}\ldots x_n]_n$. Note that thus
  $[x]_n=B_{e^{-n}}(x)$, where as usual $B_\delta(x)$ denotes the open ball of radius $\delta$ around
  $x$. More generally, we also consider finite sets $M\ssq\Z$ of positions and `words' $w\in\cA^M$,
  and then write
  \[
    [w]_M\ = \ \left\{x\in\cA^\Z\mid x_k=w_k \textrm{ for all } k\in M \right\} \ .
    \]
  Note that thus $[w]_n=[w]_{\{-n\ld n\}}$ (and in particular $[w]_n\neq [w]_{\{n\}}$ if $n\neq 0$).
  Finally, given $w\in\cA^n_\kappa$, we will also use the notation
  \[
    [w]_n^+ \ = \ [w]_{\{0\ld n-1\}} \ = \ \left\{x\in\cA^\Z_\kappa\mid x_k=w_k \textrm{ for all }
    k=0\ld n-1\right\} \ .
      \]
      By $\sigma:\cA_\kappa^\Z\to\cA_\kappa^\Z, \ \nzfolge{x_n} \mapsto \lb x_{n+1}\rb_{n\in\Z}$, we
      denote the (left) shift map on $\cA_\kappa^\Z$. For any collection of words
      $W\ssq \cA_\kappa^*$, we let
  \begin{equation}\label{e.permitted_words_subshift}
  X_W \ = \ \left\{x\in\cA^\Z_\kappa\mid \cW(x)\subseteq W\right\}
  \end{equation}
  and also write $X_\xi$ instead of $X_{W(\xi)}$. This obviously defines a (possibly empty) subshift
  \mbox{in $\cA^\Z_\kappa$.}  

  %%%%%%%%%%%%%%%%%%%%%%%%%%%%%%%%%%%%%%%%%%%%%%%%%%%%%%%%%%%%%%%%%%%%%%%%%%%%%%%%%%%%%%%%

  \subsection{Shift endomorphisms as factor maps between subshifts} \label{Prelim:shift_endomorphisms}
  A continuous mapping $\eta:\cA_\kappa^\Z\to\cA_\kappa^\Z$ that satisfies
  $\sigma\circ \eta=\eta\circ\sigma$ is called a {\em shift endomorphism}. For any $\ell\in\N$, we
  denote the set of {\em block codes} of length $\ell$ by
\[
\cF(\kappa,\ell) \ = \ \left\{h:\cA_\kappa^\ell\to \cA_\kappa\right\} \ .
\]
Given $m\geq 1$, any $h\in\cF(\kappa,\ell)$ induces mappings
$h_m:\cA_\kappa^{m+\ell-1}\to\cA_\kappa^m$ and a map $\eta_h:\cA_\kappa^\Z\to\cA_\kappa^\Z$, which are
given by
\begin{eqnarray}
  \label{e.finite_block_code_extension} h_m(w)_j & = & h\lb w_{[j,j+\ell-1]}\rb \quad \textrm{ for } j=0\ld
  m-1 \ , \\ \eta_h(x)_j & = & h\lb x_{[j,j+\ell-1]}\rb \quad \ \textrm{ for } j\in\Z
  \ ,\label{e.infinite_block_code_extension}
\end{eqnarray}
where $w\in\cA_\kappa^{m+\ell-1}$ and $x\in\cA_\kappa^\Z$, respectively. The map $\eta_h$ will be referred to as the
the {\em sliding block code} induced by $h$.

A closed $\sigma$-invariant subset $X\ssq\cA_\kappa^\Z$ is called a {\em subshift}.  Given
$\ell\in\N$, we denote the collection of all maps $h:\cW_\ell(X)\to \cA$ by
$\cF_X(\kappa,\ell)$. Then, similar to above, any $h\in\cF_X(\kappa,\ell)$ induces mappings
\[
  h_m:\cW_{m+\ell-1}(X)\to \cA_\kappa^m \eqand \eta^X_h:X\to\cA_\kappa^\Z \ ,
\]
which are again defined by (\ref{e.finite_block_code_extension}) and
(\ref{e.infinite_block_code_extension}), respectively. If we let $Y=\eta^X_h(X)$, then $\eta^X_h$ is a
factor map from $(X,\sigma)$ to $(Y,\sigma)$. Recall here that $\eta:X\to Y$ is a {\em factor map} if
$\eta$ is continuous and surjective and $\sigma\circ \eta=\eta\circ \sigma$. Note also that any block
code $h\in\cF_X(\kappa,\ell)$ can be extended to a block code in $\cF(\kappa,\ell)$, since the images
of any words in $\cA_\kappa^\ell\smin \cW_\ell(X)$ can be chosen freely in such an extension.

The following proposition is a version of the classical Curtis-Hedlund-Lyndon Theorem
\cite{Hedlund1969SlindingBlockCodes} restricted to subshifts (and the original statement is recovered
when $X=Y=\cA^\Z$). The proof does not differ from \cite{Hedlund1969SlindingBlockCodes} in any
essential way, but it is included for the convenience of the reader.
 \begin{thm}[Curtis-Hedlund-Lyndon]
   \label{p:hedlund_for_subshifts} Suppose that $X,Y\ssq\cA_\kappa^\Z$ are subshifts and $\eta:X\to Y$
   is a factor map. Then there exist $k\in\Z$, $\ell\in\N$ and $h\in\cF_X(\kappa,\ell)$ such that
   $\eta=\sigma^{-k}\circ\eta^X_h$.
 \end{thm}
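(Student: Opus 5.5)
The plan is the classical uniform continuity argument, carried out inside $X$. We consider the continuous map $\pi_0\circ\eta : X\to\cA_\kappa$, where $\pi_0(y)=y_0$ is the zeroth coordinate. Since $\cA_\kappa$ is discrete, this map is locally constant. Since $X$ is compact, it is uniformly continuous. Hence there exists $r\in\N_0$ such that for all $x,x'\in X$,
\[
  x_{[-r,r]}=x'_{[-r,r]} \quad\Longrightarrow\quad \eta(x)_0=\eta(x')_0 \ .
\]
To see this, take $\delta>0$ with $d(x,x')<\delta\follows \eta(x)_0=\eta(x')_0$ and pick $r$ with $e^{-(r+1)}<\delta$. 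Agreement on $[-r,r]$ gives $d(x,x')\leq e^{-(r+1)}$.

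Next we define the block code. Set $\ell=2r+1$ and $k=r$. For $w\in\cW_\ell(X)$, choose any $x\in X$ with $x_{[-r,r]}=w$; such an $x$ exists because $w$ occurs in some element of $X$ and $X$ is shift-invariant. Put $h(w)=\eta(x)_0$. By the choice of $r$, this value does not depend on the choice of $x$, so $h\in\cF_X(\kappa,\ell)$ is well defined.

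It remains to check that $\eta=\sigma^{-k}\circ\eta^X_h$. We fix $x\in X$ and $n\in\Z$. Using $\eta\circ\sigma^n=\sigma^n\circ\eta$, we get $\eta(x)_n=\eta(\sigma^n x)_0$. Since $(\sigma^n x)_{[-r,r]}=x_{[n-r,n+r]}$, this equals $h\lb x_{[n-r,n+r]}\rb$. On the other side, $(\sigma^{-k}\eta^X_h(x))_n=\eta^X_h(x)_{n-r}=h\lb x_{[n-r,n-r+\ell-1]}\rb=h\lb x_{[n-r,n+r]}\rb$, so the two sides agree.

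No step is a real obstacle. The only point needing care is the well-definedness of $h$ on $\cW_\ell(X)$, which uses both the uniform continuity bound and the shift-invariance of $X$. One should also keep the sign conventions for the shift $\sigma^{-k}$ straight.
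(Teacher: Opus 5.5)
Your proof is correct and follows essentially the same route as the paper. The paper obtains the finite window from local constancy of $x\mapsto\eta(x)_0$ together with a finite cover of $X$ by cylinder sets of maximal level $k$, which is the same compactness argument you phrase as uniform continuity, and it then uses shift-equivariance exactly as you do. Your version is slightly more careful: you check that $h$ is well defined on $\cW_\ell(X)$, and your window $x_{[n-r,n+r]}$ has the correct length, whereas the paper's $\sigma^{-k}(x)_{[0,2k+1]}$ should read $[0,2k]$.
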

 
 Note that if $\varphi$ is a factor map between the subshifts $(X,\sigma)$ and $(Y,\sigma)$, then so
 is $\eta=\sigma^k\circ\varphi$. Hence, we will always assume without loss of generality that $k=0$.
 \proof\quad As $\varphi$ is continuous, so is the map
 $\varphi_0:X\to\cA_\kappa,\ x\mapsto \varphi(x)_0$. Since $\varphi_0$ is finite-valued, it must be
 locally constant. Due to compactness, this means that $X$ can be written as a finite union of
 cylinder sets $C_1\ld C_n$ such that $\varphi_0$ is constant on each $C_i$. If $k$ is the maximal
 level of these cylinder sets, then $\varphi_0(x)=\varphi(x)_0$ only depends on $x_{-k}\ld
 x_k$. Hence, there exists a mapping $h:\cA_\kappa^{2k+1}\to\cA_\kappa$ such that
 $\varphi(x)_0=h\lb x_{-k}\ldots x_k\rb=h\lb \sigma^{-k}(x)_{[0,2k+1]}\rb$. The fact that $\varphi$
 commutes with the shift $\sigma$ now yields $\varphi(x)_n=h\lb\sigma^{-k}(x)_{[n,n+2k+1]}\rb$, which
 means that $\varphi=\eta_h\circ\sigma^{-k}=\sigma^{-k}\circ\eta_h$ as claimed.  \qed\medskip

A subshift $Y$ is called {\em linearly recurrent} or {\em linearly repetitive} if there exists some
constant $L\in\N$ such that $w\triangleleft v$ holds for all $w,v\in\cW(Y)$ whenever $|v|\geq L|w|$.

%%%%%%%%%%%%%%%%%%%%%%%%%%%%%%%%%%%%%%%%%%%%%%%%%%%%%%%%%%%%%%%%%%%%%%%%%%%%%%%%%%%%%%%%

\subsection{Topological dynamics} Throughout this article, a {\em topological dynamical system (tds)}
will be a pair $(X,T)$, where $X$ is a compact metric space and $T:X\to X$ a homeomorphism. The {\em
  orbit} of a point $x\in X$ under $T$ is given by $\cO_T(x)=\{T^n(x)\mid n\in\Z\}$. Similarly, the
{\em forward} and {\em backward} orbits of $x\in X$ are defined as $\cO_T^+(x)=\{T^n(x)\mid n\geq 0\}$
and $\cO_T^-(x)=\{T^n(x)\mid n\leq 0\}$, respectively. The tds $(X,T)$ is called {\em minimal} if all
orbits are dense, that is, $\overline{\cO_T(x)}=X$ for all $x\in X$. In this case, all forward and
backward orbits are dense as well.  In general, a point $x\in X$ is called {\em almost periodic} (with
respect to $T$) if $\lb\overline{\cO_T(x)},T\rb$ is minimal. Note that when $X$ is a subshift, then
almost periodicity of a sequence $x\in X$ is equivalent to the fact that every subword
$w\triangleleft x$ occurs infinitely often in $x$, with uniformly bounded gaps between successive
occurrences.

A tds $(X,T)$ is called {\em uniquely ergodic} if there exists a unique $T$-invariant probability
measure $\mu$ on $X$. This is equivalent to the fact that
\begin{equation}\label{e.unique_ergodicity}
  \nLim \jnergsum f\circ T^j(x) \ = \ \int_X f\ d\mu
\end{equation}
holds for all $x\in X$. In this case, the convergence in (\ref{e.unique_ergodicity}) is uniform in
$x\in X$. A tds $(X,T)$ is called {\em strictly ergodic} if it is both minimal and uniquely
ergodic. We note that this holds for all linearly recurrent subshifts. 

%%%%%%%%%%%%%%%%%%%%%%%%%%%%%%%%%%%%%%%%%%%%%%%%%%%%%%%%%%%%%%%%%%%%%%%%%%%%%%%%%%%%%%%%
%%%%%%%%%%%%%%%%%%%%%%%%%%%%%%%%%%%%%%%%%%%%%%%%%%%%%%%%%%%%%%%%%%%%%%%%%%%%%%%%%%%%%%%%
%%%%%%%%%%%%%%%%%%%%%%%%%%%%%%%%%%%%%%%%%%%%%%%%%%%%%%%%%%%%%%%%%%%%%%%%%%%%%%%%%%%%%%%%

\section{Permutative block codes: inverse branches and skew product form} \label{InverseBranches} As
mentioned, a block code $h\in\cF(\kappa,\ell)$ is called {\em permutative} if the two mappings
  \begin{eqnarray}
    \rho_u^{(h)} & : & \cA_\kappa \rightarrow \cA_\kappa \quad , \quad a\mapsto h(ua) \\ \lambda_u^{(h)} & : & \cA_\kappa
    \rightarrow \cA_\kappa \quad , \quad a \mapsto h(au)
  \end{eqnarray}
  are both permutations of the alphabet $\cA_\kappa$ for all $u\in\cA_\kappa^{\ell-1}$. when no
  ambiguity can arise, we will drop the reference to $h$ and simply write $\rho_u,\ \lambda_u$ . The
  block code $h$ is called {\em right permutative} if the $\rho_u$ are permutations for all
  $u\in\cA_\kappa^{\ell-1}$, and {\em left permutative} if this holds for the
  $\lambda_u$. \cite[Section 17]{Hedlund1969SlindingBlockCodes} shows that $h\in\cF(\kappa,\ell)$ is
  permutative if and only if $\#\eta^{-1}_h(y)=\kappa^{\ell-1}$. Non-permutative sliding block codes
  can be constant-to-one as well, but in this case the fibre cardinality must be strictly smaller. An
  example is given in \cite[Section 6]{Hedlund1969SlindingBlockCodes}.

  Now, suppose that $h$ is permutative, $w\in\cA_\kappa^m$ for some $m\geq 1$, $a\in\cA_\kappa$ and
  $v\in h_m^{-1}(w)$. If $v_{[m,m+\ell-2]} = u$ then $v\rho_u^{-1}(a)$ is the unique word of length
  $m+\ell$ that is mapped to $wa$ by $h_{m+1}$. Likewise, if $v_{[0,\ell-2]}=u$, then
  $\lambda_u^{-1}(a)v$ is the unique word of length $m+\ell$ which is mapped to $aw$. Using this fact,
  a straightforward induction shows that for every finite word $w\in\cA_\kappa^*$ and
  $u\in\cA_\kappa^{\ell-1}$ there exist unique words $\xi^u(w),\zeta^u(w)\in\cA_\kappa^{|w|+\ell-1}$
  such that
 \[
   h_{|w|}(\xi^u(w))\ = \ h_{|w|}(\zeta^u(w))\ = \ w
 \]
 and $\xi^u(w)$ starts with $u$, whereas $\zeta^u(w)$ ends with $u$. Similarly, for any sequence
 $y\in\cA_\kappa^\Z$ and a prefix $u\in\cA_\kappa^{\ell-1}$, there exists a unique sequence
 $\xi^u(y)\in\cA_\kappa^\Z$ such that
   \begin{eqnarray}\label{def:xi^u}
     \xi^u(y)_{[0,\ell-2]} \ = \ u \eqand \eta_h\lb\xi^u(y)\rb \ = \ y \ . 
   \end{eqnarray}
   
   Given a permutative block code $h\in \cF(\kappa,\ell)$ and $w\in \cA_\kappa^*$, this allows to
   define the {\em prefix permutation induced by $w$} as 
   \begin{equation} \label{e.def_prefix_permutation} \Phi_w^{(h)}: \cA_\kappa^{\ell-1}\rightarrow
     \cA_\kappa^{\ell-1} \ , \ u \mapsto \xi^u(w)_{[|w|,|w|+\ell-2]} \ . 
\end{equation}
In the same way, we define the {\em suffix permutation induced by $w$} as
\begin{equation}\label{e.def_inverse_prefix_permutation}
 \Psi^{(h)}_w : \cA_\kappa^{\ell-1}\to\cA_\kappa^{\ell-1}\quad , \quad u \mapsto
 \zeta^u(w)_{[0,\ell-2]} \ .
\end{equation}
As a direct consequence of the definitions, we obtain that $\Phi_w(\Psi_w(u))=u$ and
$\Psi_w(\Phi_w(u))=u$ for all $u\in\cA_\kappa^{\ell-1}$. Hence, these maps are inverse to each other
and must therefore be permutations of $\cA_{\kappa}^{\ell-1}$. We now prove
Propositions~\ref{p.intro_inverse_branches} and \ref{p.intro_prefix_permutations} stated in the
introduction.

\proof[\textit{\bfseries Proof of Proposition~\ref{p.intro_prefix_permutations}}] The fact that the mappings $\Phi_w$
are permutations already follows from the discussion above. It remains to show that
\begin{equation} \label{e.induced_permutation_composition}
  \Phi_{wv} \ = \ \Phi_v\circ\Phi_w 
\end{equation}
holds for all $w,v\in\cA^*_\kappa$. To that end, note that for any $u\in\cA_\kappa^{\ell-1}$, we have
that
\begin{equation}\label{e.xi_u_concatenation}
  \xi^u(wv) \ = \ \xi^u(w)_{[0,|w|-1]}\xi^{\Phi_w(u)}(v) \ . 
\end{equation}
Since $\xi^u(w)_{[0,|w|-1]}\xi^{\Phi_w(u)}(v)$ ends with $\Phi_v(\Phi_w(u))$, this proves
(\ref{e.induced_permutation_composition}).  \qed\medskip

As mentioned, the assertions of Proposition~\ref{p.intro_inverse_branches} follow from more general
results in \cite{Hedlund1969SlindingBlockCodes}, but we include a short direct argument for the
permutative case.

\proof[\textit{\bfseries Proof of
Proposition~\ref{p.intro_inverse_branches}}] \quad The existence of the mappings $\xi^u$ already
follows from the discussion above. Continuity is a consequence of the fact that
$\xi^u(y)_{[-n,n+\ell-1]}=\xi^u(y_{[-n,n]})$ only depends on $y_{-n}\ld y_n$. Further,
(\ref{e.fibres_via_xi_u}) follows directly from the existence and uniqueness of the sequences
$\xi^u(y)$ -- each element $x\in \eta_h^{-1}(y)$ coincides with $\xi^u(y)$ for $u=x_{[0,\ell-2]}$.

In order to see that different elements of the same fibre are $(\ell-1)$-separated, note that similar
to above, for each $n\in\N$ there exists a unique sequence $\xi^{u,n}(y)\in\cA_\kappa^\Z$ such that
$\eta_h\lb\xi^{u,n}(y)\rb=y$ and $\xi^{u,n}(y)_{[n,n+\ell-2]}=u$ (for $n=0$, we have
$\xi^{u,0}(y)=\xi^u(y)$).  Now, if $x, x'\in\eta^{-1}_h(y)$ agree on some interval $[n,n+\ell-2]$ with
$n\in\Z$, say $x_{[n,n+\ell-2]}=x'_{[n,n+\ell-2]}=u$, this yields $x = \xi^{u,n}(y) = x'$.  Hence,
different elements of the same fibre can never coincide on any interval of \mbox{length $\ell-1$.}
\qed\medskip

\begin{remark} \label{r.inverse_branches_general_notation}
  \begin{itemize}
  \item[(a)] In \cite{Hedlund1969SlindingBlockCodes} the question of the separation of fibre elements is
    considered in the more general setting of surjective sliding block codes, where proofs are more
    involved. The fact that different fibre elements of a permutative sliding block codes are always
    $(\ell-1)$-separated follows from \cite[Theorem 11.5]{Hedlund1969SlindingBlockCodes}, in
    combination with the constant cardinality of the fibres. 
  \item[(b)] We note that with the notation of the previous proof, we have
  \[
    \xi^{u,n}(y) \ = \ \begin{cases} \xi^{\Psi_{y_{[0,n-1]}}(u)}(y) & \textrm{ if }n\geq 0\\
                         \xi^{\Phi_{y_{[n,-1]}}(u)}(y) & \textrm{ if } n<0
                       \end{cases}
  \]
\end{itemize}
\end{remark}

We can now establish the skew product representation stated in
Theorem~\ref{t:skew_product_representation}.

\proof[\textit{\bfseries Proof of Theorem~\ref{t:skew_product_representation}}] Suppose that
$h\in \cF(\kappa,\ell)$ is a permutative block code, $Y\ssq \cA_\kappa^\Z$ is a subshift and
$X=\eta_h^{-1}(Y)$. Let
  \begin{eqnarray}\label{e.conjugacy_to_skew_product_2}
     \gamma : X \to Y\times\cA_\kappa^{\ell-1} & , & x \mapsto \left(\eta_h(x),x_{[0,\ell-2]}\right) \ , 
  \\ \label{e.skew_product_system_2}
      T : Y\times \cA_\kappa^{\ell-1}\to Y\times\cA_\kappa^{\ell-1} & , & (y,u)\mapsto
      \left(\sigma(y),\Phi_{y_0}(u)\right) \ .
  \end{eqnarray}
  We need to show that both $\gamma$ and $T$ are homeomorphisms and $\gamma$ is a conjugacy between
  the systems $(X,\sigma)$ and $(Y\times\cA_\kappa^{\ell-1},T)$.

  The continuity of both mappings is obvious from the definitions. Due to
  Proposition~\ref{p.intro_inverse_branches}, the inverse of the map $\gamma$ is explicitely given by
\[
    \gamma^{-1} : Y\times\cA_\kappa^{\ell-1}\to X \quad , \quad (y,u)\mapsto \xi^u(y) \ .
    \]
    The map $T$ is bijective since both $\sigma$ and the mappings $\Phi_{y_0}$ with $y_0\in\cA_\kappa$
    are bijective.  Hence, both $\gamma$ and $T$ are continuous bijections between compact Hausdorff
    spaces, and therefore homeomorphisms. The fact that $\gamma$ is indeed a conjugacy follows from a
    direct computation:
	\begin{eqnarray*}
	\gamma(\sigma(x)) & = & \left(\eta_h\circ \sigma(x), \sigma(x)_{[0,n-2]}\right)\ =
        \ \left(\sigma\circ \eta_h (x), \Phi_{\eta_h(x)_0}(x_{[0,n-2]})\right)\\ & = &
        T\left(\eta_h(x), x_{[0,n-2]}\right)\ = \ T\left(\gamma(x)\right).
	\end{eqnarray*}
\qed\medskip

For later use, we also include the following

\begin{lem}\label{l:independence_lemma}
	Consider a permutative block code $h\in \cF(\cA, \ell)$. Then for any $w,v\in \cA_\kappa^*$
        and $u\in \cA_\kappa^{\ell-1}$, the mapping $\cA_\kappa^{\ell-1}\rightarrow
        \cA_\kappa^{\ell-1},\ u'\mapsto \Phi_{wu'v}(u)$ is a bijection.
\end{lem}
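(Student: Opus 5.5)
Here $u'\in\cA_\kappa^{\ell-1}$ is inserted as a word between $w$ and $v$, and $|wu'v|=|w|+(\ell-1)+|v|$. By Proposition~\ref{p.intro_prefix_permutations} we have
\[
\Phi_{wu'v}(u) \ = \ \Phi_v\lb\Phi_{u'}(\Phi_w(u))\rb \ .
\]
Since $\Phi_v$ is a permutation, it suffices to show that for fixed $\tilde u=\Phi_w(u)\in\cA_\kappa^{\ell-1}$ the map $u'\mapsto \Phi_{u'}(\tilde u)$ is a bijection of $\cA_\kappa^{\ell-1}$. So the whole statement reduces to the case $w=v=e$. As this is a self-map of a finite set, I will prove injectivity (or, equivalently, surjectivity).

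The key step is to show that the lift of a word of length $\ell-1$ is rigid on both ends. The word $\xi^{\tilde u}(u')$ has length $2(\ell-1)$, its first $\ell-1$ letters are $\tilde u$, and its last $\ell-1$ letters are $\Phi_{u'}(\tilde u)$. Conversely, fix $\tilde u$ and a target $s\in\cA_\kappa^{\ell-1}$, and form the word $z=\tilde u s$ of length $2\ell-2$. Then $h_{\ell-1}(z)$ is a word $u'$ of length $\ell-1$, and $z$ is a lift of $u'$ starting with $\tilde u$. By uniqueness of $\xi^{\tilde u}(u')$ we get $z=\xi^{\tilde u}(u')$, hence $\Phi_{u'}(\tilde u)=s$. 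This proves surjectivity, and therefore bijectivity, since domain and codomain are the same finite set.

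For a more explicit argument, the inverse is $s\mapsto h_{\ell-1}(\tilde u s)$. Injectivity can also be checked directly: if $\Phi_{u'}(\tilde u)=\Phi_{u''}(\tilde u)=s$, then both lifts equal $\tilde u s$, so $u'=h_{\ell-1}(\tilde us)=u''$.

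I expect no serious obstacle. The only subtle point is the bookkeeping: one must note that a word of exactly length $\ell-1$ has a lift whose prefix and suffix blocks do not overlap and together exhaust it. Once that is observed, the lift is determined by the pair (prefix, suffix), and the claim follows from the uniqueness of the lifts $\xi^u(\cdot)$. Notably, permutativity enters only through the existence and uniqueness of $\xi^u$ and the fact that the $\Phi$'s are permutations, both established above.
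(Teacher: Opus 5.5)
Your proof is correct and uses essentially the same idea as the paper. The paper skips your reduction step: it concatenates $\xi^u(w)$ directly with the suffix-lift $\zeta^{\bar u}(v)$ and reads off $u'=h_{\ell-1}\bigl(\Phi_w(u)\Psi_v(\bar u)\bigr)$ as a preimage of $\bar u$, which is your inverse $s\mapsto h_{\ell-1}(\tilde u s)$ after your factorisation $\Phi_{wu'v}=\Phi_v\circ\Phi_{u'}\circ\Phi_w$, with $\tilde u=\Phi_w(u)$ and $s=\Psi_v(\bar u)=\Phi_v^{-1}(\bar u)$.
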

Note that we include the cases where $w$ or $v$ equals the empty word $e$.
\begin{proof}
	Fix $w,v\in\cA^*_\kappa$ and $u\in \cA_\kappa^{\ell-1}$. Let $\bar
        u\in\cA_\kappa^{\ell-1}$. Then the word $\xi^u(w)\zeta^{\bar u}(v)$, which starts with $u$ and
        ends in $\bar u$, is mapped by $h_{|w|+|v|+\ell-1}$ to a word $wu'v$, where
        $u'\in\cA_\kappa^{\ell-1}$ is given by $u'=h_{\ell-1}\lb\Phi_w(u)\Psi_v(\bar u)\rb$. Hence, we
        have that
        \[
        \xi^u(wu'v) \ = \ \xi^u(w)\zeta^{\bar u}(v)
        \]
        ends with $\bar u$, so that $\Phi_{wu'v}(u)=\bar u$ by definition of the induced prefix
        permutation $\Phi_{wu'v}$. This proves the surjectivity of the mapping $u'\mapsto
        \Phi_{wu'v}(u)$, which must therefore be a permutation of $\cA_\kappa^{\ell-1}$ as claimed.
\end{proof}

%%%%%%%%%%%%%%%%%%%%%%%%%%%%%%%%%%%%%%%%%%%%%%%%%%%%%%%%%%%%%%%%%%%%%%%%%%%%%%%%%%%%%%%%
%%%%%%%%%%%%%%%%%%%%%%%%%%%%%%%%%%%%%%%%%%%%%%%%%%%%%%%%%%%%%%%%%%%%%%%%%%%%%%%%%%%%%%%%
%%%%%%%%%%%%%%%%%%%%%%%%%%%%%%%%%%%%%%%%%%%%%%%%%%%%%%%%%%%%%%%%%%%%%%%%%%%%%%%%%%%%%%%%

\section{Minimality of lifts under permutative sliding block codes}\label{Minimality}

\subsection{Minimality via return permutations}\label{Minimality_via_return_permutations}

The main goal of this section is to provide a proof of Theorem~\ref{t:intro_minimality}. However, we
start with the observation that the lift of a minimal subshift can always be decomposed into a finite
number of minimal components. This is a consequence of the following

\begin{thm}[{\cite[Theorems 12.4]{Hedlund1969SlindingBlockCodes}}]\label{t:preimage_of_ap_is_ap}
	Let $h\in \cF(\kappa,\ell)$ be a permutative block code.  If $y\in \cA_\kappa^{\Z}$ is
        almost periodic, then any $x\in \eta^{-1}_h(y)$ is almost periodic.
\end{thm}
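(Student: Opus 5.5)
We work in the skew product representation of Theorem~\ref{t:skew_product_representation}, applied to the subshift $Y=\overline{\cO_\sigma(y)}$, which is minimal because $y$ is almost periodic. Let $X=\eta_h^{-1}(Y)$ and let $T$ be the skew product on $Y\times\cA_\kappa^{\ell-1}$. The claim then becomes: every point $(y,u)$ is almost periodic for $T$. This is a standard fact about group extensions of minimal systems, here with the finite group $G=\Per(\cA_\kappa^{\ell-1})$ acting by the locally constant cocycle $y\mapsto\Phi_{y_0}$. We prove it directly.

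\textbf{Step 1: lift to a principal extension.} Consider $S:Y\times G\to Y\times G$, $(y,g)\mapsto(\sigma(y),\Phi_{y_0}\circ g)$. This system commutes with right multiplication $R_k:(y,g)\mapsto(y,g\circ k)$ for $k\in G$. The map $(y,g)\mapsto (y,g(u))$ is a factor map onto the orbit closure structure of $T$. Almost periodicity passes to factors, so it suffices to show that every point of $Y\times G$ is $S$-almost periodic.

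\textbf{Step 2: find one almost periodic point in each fibre.} By Zorn's lemma there is a minimal set $M\subseteq Y\times G$, and its projection to $Y$ is all of $Y$ by minimality of $Y$. The sets $M\circ R_k$ are again minimal, being images of $M$ under a homeomorphism commuting with $S$. Fix $y\in Y$. Some $(y,g_0)$ lies in $M$, and for an arbitrary $g$ we have $(y,g)=R_{g_0^{-1}g}(y,g_0)\in M\circ R_{g_0^{-1}g}$. Hence $(y,g)$ lies in a minimal set and is therefore almost periodic. This holds for all $(y,g)$, and projecting via Step 1 shows that every $(y,u)$, and thus every $x\in\eta_h^{-1}(y)$, is almost periodic.

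\textbf{Main obstacle.} The only delicate point is justifying that the projection of $M$ to $Y$ is onto. This holds because the projection is a closed, nonempty $\sigma$-invariant subset of the minimal set $Y$. Equally, one must check that the transformed sets $M\circ R_k$ remain minimal, which follows from the commutation $S\circ R_k=R_k\circ S$. Alternatively, one could argue combinatorially: a word $x_{[a,b]}$ recurs in $x$ whenever $y_{[a,b+\ell-1]}$ recurs with the same prefix state. The states at successive returns evolve by a finite permutation group, so a pigeonhole argument on powers gives bounded gaps. However, the group-extension argument above is cleaner.
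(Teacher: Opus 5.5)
Your proof is correct, but it does not follow the paper, which gives no proof here and simply cites Hedlund's Theorem~12.4. As remarked after Corollary~\ref{c.number_of_min_components}, that result holds for arbitrary surjective sliding block codes. In that setting the fibres carry no group structure, and there is no cocycle into a finite group to exploit.

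You instead use the permutative structure through Theorem~\ref{t:skew_product_representation}. You pass to the principal extension $(Y\times G,S)$ with $G=\Per(\cA_\kappa^{\ell-1})$. The right translations $R_k$ commute with $S$ and act transitively on every fibre $\{y\}\times G$, so every point lies in a translate $R_k(M)$ of a single minimal set. Almost periodicity then descends along the factor map $(y,g)\mapsto(y,g(u))$ and the conjugacy $\gamma$. All of these steps check out.

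What your route buys:
\begin{itemize}
\item Self-containedness: only Theorem~\ref{t:skew_product_representation} and elementary topological dynamics are needed.
\item Extra structure: the minimal subsets of $Y\times G$ are precisely the $G$-translates of $M$. This is finer information than the bare finiteness in Corollary~\ref{c.number_of_min_components}.
\end{itemize}
What Hedlund's route buys is generality. Since the paper only applies the statement to permutative codes, giving up that generality costs nothing here.

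One caveat concerns only your closing aside: the combinatorial alternative does not work as sketched. Powers of a return word need not occur in $y$. Pigeonholing the cocycle values at successive returns of a word only yields return blocks with trivial permutation, i.e.\ repetitions of \emph{some} state, not syndetic returns to the initial prefix. Closing that gap essentially requires the minimal-set argument of your Step~2.
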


\begin{cor}\label{c.number_of_min_components}
  Let $h\in \cF(\kappa,\ell)$ be a permutative block code and $Y\ssq \cA_\kappa^\Z$ a minimal subshift. Then
  $X=\eta_h^{-1}(Y)$ is the disjoint union of at most $\kappa^{\ell-1}$ minimal sets.
\end{cor}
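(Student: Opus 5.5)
We combine Theorem~\ref{t:preimage_of_ap_is_ap} with the standard fact that the orbit closures of almost periodic points form a partition into minimal sets, and then count the pieces using the fibre structure from Proposition~\ref{p.intro_inverse_branches}.

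\emph{Every point of $X$ has a minimal orbit closure.} Let $x\in X=\eta_h^{-1}(Y)$. Since $Y$ is minimal, $y=\eta_h(x)$ is almost periodic. By Theorem~\ref{t:preimage_of_ap_is_ap}, $x$ is almost periodic as well, so $M_x=\overline{\cO_\sigma(x)}$ is a minimal set. It lies in $X$, since $X$ is closed and $\sigma$-invariant.

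\emph{These minimal sets partition $X$.} Two minimal sets are either equal or disjoint. Indeed, if $M_x\cap M_{x'}\neq\emptyset$, pick $z$ in the intersection; minimality of each set gives $M_x=\overline{\cO(z)}=M_{x'}$. Hence $X$ is the disjoint union of the distinct sets $M_x$, $x\in X$.

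\emph{Counting the pieces.} Fix any $y\in Y$. Each minimal component $M\subseteq X$ satisfies $\eta_h(M)=Y$: the image is closed, invariant and nonempty in the minimal set $Y$. Hence $M$ meets the fibre $\eta_h^{-1}(y)$. The components are pairwise disjoint, so distinct components meet this fibre in distinct points. By Proposition~\ref{p.intro_inverse_branches} the fibre has exactly $\kappa^{\ell-1}$ elements, so there are at most $\kappa^{\ell-1}$ components. In particular there are only finitely many, so each is also open in $X$ and the decomposition is a genuine topological one.

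The only substantive input is Theorem~\ref{t:preimage_of_ap_is_ap}, which we take from Hedlund. The remaining steps are routine, so there is no real obstacle. The one point to be careful about is that $\eta_h(M)=Y$ for each component $M$, and this follows directly from the minimality of $Y$.
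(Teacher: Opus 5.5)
Your proposal is correct and follows the paper's proof essentially step for step. You use Theorem~\ref{t:preimage_of_ap_is_ap} to get almost periodicity and hence a partition into minimal sets. Then you use $\eta_h(M)=Y$ to show each minimal set meets every fibre, and bound the number of sets by $\#\eta_h^{-1}(y)=\kappa^{\ell-1}$.
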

\proof\quad Due to Theorem~\ref{t:preimage_of_ap_is_ap}, every point in $X$ is almost periodic and
therefore contained in some minimal set. Consequently, $X$ is a disjoint union of minimal sets. Since
$Y$ is minimal, we must have $\eta_h(M)=Y$ for every shift-invariant closed subset $M\ssq
X$. Therefore every minimal set must intersect each fibre $\eta_h^{-1}(y)$, $y\in Y$. However, as the
fibres contain only $\kappa^{\ell-1}$ points by Proposition~\ref{p.intro_inverse_branches}, there can
also be at most $\kappa^{\ell-1}$ minimal sets. \qed\medskip

Note that both these statments hold in the more general situaton that $\eta_h$ is surjective (but $h$
is not necessarily permutative), see \cite{Hedlund1969SlindingBlockCodes}.\medskip We can now turn to
the

        \proof[\textit{\bfseries Proof of Theorem \ref{t:intro_minimality}}]

        We note that (ii)\follows(iv) is obvious and show the implications (i)$\follows$(ii),
        (iv)\follows(iii) and (iii)$\follows$(i). \smallskip

        (i)\follows(ii).\quad Let $w\in\cW(Y)$ and $u,u'\in\cA_\kappa^{\ell-1}$. Then both $\xi^u(w)$
        and $\xi^{u'}(w)$ are contained in
       \[
       \cW(X)\ = \ \bigcup_{k=0}^{\ell-1}\cA_\kappa^k \cup \bigcup_{u\in\cA_\kappa^{\ell-1}}\xi^u(\cW(Y)) \ .
       \]
       Hence, by minimality, every $x=\xi^u(y)\in X$ contains both $\xi^u(w)$ and $\xi^{u'}(w)$
       infinitely often as subwords. By shifting if necessary, we may assume
       $x_{[0,|w|+\ell-2]}=\xi^u(w)$, which implies $y_{[0,|w|-1]}=w$.  Further, if
       $\xi^u(y)_{[k,k+|w|+\ell-2]}=\xi^{u'}(w)$, then we must also have $y_{[k,k+|w|-1]}=w$ and
       $\xi^{u}(y)_{[k+1,k+\ell-1]}=u'$. Consequently, we obtain $\Phi_{wv}(u)=u'$, where
       $v=y_{[|w|,k]}$. Since $u,u'\in\cA_\kappa^{\ell-1}$ were arbitrary, this shows that $\cR(w)$
       acts transitively on $\cA_\kappa^{\ell-1}$.\smallskip
         
       (iv)\follows(iii).\quad Suppose that (iv) holds, fix $n\in\N$ and let $k\geq n$. Let
       $\Phi_{w^{(k)}v}$ be a return permutation of $w^{(k)}$, that is,
       $w^{(k)}vw^{(k)}\in\cW(Y)$. Since $w^{(k)}$ starts with $w^{(n)}$, we have $w^{(k)}=w^{(n)}w'$
       for some $w'\in\cA_\kappa^*$. This means that $w^{(n)}w'vw^{(n)}\in\cW(Y)$, so that
       $\Phi_{w^{(k)}v}=\Phi_{w^{(n)}w'v} \in\cR(w^{(n)})$. As $k\geq n$ and
       $\Phi_{w^{(k)}v}\in\cR(w^{(k)})$ were arbitrary, this shows that
       $\bigcup_{k\geq n}\cR\lb w^{(k)}\rb\ssq \cR\lb w^{(n)}\rb$. Therefore $\cR\lb w^{(n)}\rb$ acts
       transitively on $\cA_\kappa^{\ell-1}$, as required in (iii).  \smallskip
         
   (iii)\follows(i).\quad Suppose for a contradiction that (iii) holds, but there exist a minimal
         strict subset $M\subsetneq X$. Let $M'=X\smin M$ and note that $M'$ is compact as a finite
         union of minimal sets by Corallary~\ref{c.number_of_min_components}. Then, since $M$ and $M'$
         must have a positive distance to each other, there exists $n\in \N$ such that
         \[
         \cW_{|w^{(n)}|+\ell-1}(M)\cap \cW_{|w^{(n)}|+\ell-1}(M') \ = \ \varnothing \ .
         \]
         In order to simplify notation, we let $w=w^{(n)}$ and $m=|w^{(n)}+\ell-1|$ for the remainder
         of this proof.  Now, let $u,u'\in\cA_\kappa^{\ell-1}$ be such that $\xi^{u}(w)\in \cW_{m}(M)$
         and $\xi^{u'}(w)\in \cW_{m}(M')$. Note that since $M$ and $M'$ intersect every fibre due to
         the minimality of $(Y,\sigma)$, such $u,u'$ must always exist.  Using the fact that $\cR(w)$
         acts transitively on $\cA_\kappa^{\ell-1}$, we can choose a finite sequence
         $u^{(0)}=u,u^{(1)}\ld u^{(q)}=u'$ in $\cA_\kappa^{\ell-1}$ and a corresponding sequence of
         return permutations $\Phi_{wv^{(i)}}\in\cR(w)$, $i=0\ld q-1$, such that
         \[
             \Phi_{wv^{(i)}}\lb u^{(i)}\rb \ = \ u^{(i+1)}
       \] 
       holds for $i=0\ld q-1$. We now show, by finite induction on $i$, that
       \begin{equation} \label{e.min_criterion_induction}
                    \xi^{u^{(i)}}(w) \in \cW_{m}(M)
       \end{equation}
       for $i=0\ld q$, contradicting the assumption that $\xi^{u^{(q)}}(w)=\xi^{u'}(w)\in\cW_{m}(M')$
       and $\cW_m(M)\cap\cW_m(M')=\emptyset$.  
       
       For $i=0$, condition (\ref{e.min_criterion_induction}) holds by choice of $u=u^{(0)}$. For the
       induction step $i\to i+1$, choose $y\in\cA_\kappa^\Z$ such that
       $y_{[0, |wv^{(i)}w|-1]}=wv^{(i)}w$. Then, since $\xi^{u^{(i)}}(w)$ is contained in $\cW_m(M)$
       (and therefore not in $\cW_m(M')$), we must have $\xi^{u^{(i)}}(y)\in M$. However, we also have
       $\Phi_{wv^{(i)}}\lb u^{(i)}\rb=u^{(i+1)}$, which implies
       $\xi^{u^{(i)}}(y)_{[|wv^{(i)}|,|wv^{(i)}|+\ell-2]}=u^{(i+1)}$ and therefore
       \[
       \xi^{u^{(i)}}(y)_{[|wv^{(i)}|,|wv^{(i)}|+|w|+\ell-2]}\ = \ \xi^{u^{(i+1)}}(w) \ .
       \]
      Hence, the word $\xi^{u^{(i+1)}}(w)$ appears in the sequence $\xi^{u^{(i)}}(y)\in M$ and thus
      lies in $\cW_m(M)$, as required. This completes the proof.
     \qed

%%%%%%%%%%%%%%%%%%%%%%%%%%%%%%%%%%%%%%%%%%%%%%%%%%%%%%%%%%%%%%%%%%%%%%%%%%%%%%%%%%%%%%%%

\subsection{An example family}\label{ExampleFamily}

As specific examples, we consider the family of block codes $h_{\ell,\kappa}\in\cF(\kappa,\ell)$,
given by
\begin{equation}\label{e.def_example_family}
  h_{\ell,\kappa} : \cA_\kappa^\ell\to\cA_\kappa \quad , \quad w_0\ldots w_{\ell-1} \mapsto
  w_{\ell-1}-w_0\bmod \kappa \ ,
\end{equation}
with integer parameters $\kappa,\ell\geq 2$.  Recall that we write $\eta_{\ell,\kappa}$ instead of
$\eta_{h_{\ell,\kappa}}$.  In the further discussion, it will be convient to identify the alphabet
$\cA_\kappa$ with the cyclic group $\Z_\kappa=\Z/\kappa\Z$ of order $\kappa$.  Likewise, we consider
$\cA_{\kappa}^n$ as the direct product group $\Z_\kappa^n$, equipped with the coordinate-wise addition
as group structure. In this context, we will sometimes write $(w_0\ld w_{n-1})$ instead of
$w_0\ldots w_{n-1}$ for words in $\cA_\kappa^n$.  Given $w,v\in\cA_\kappa^*$ such that $v=\xi^u(w)$
for some $u\in\cA_\kappa^{\ell-1}$, we see from (\ref{e.def_example_family}) that
$v_{n+\ell-1}=v_n+w_n \bmod \kappa$ and hence, by induction,
\begin{equation}
  v_{n+k\ell-1} \ = \ v_n + \sum_{j=0}^{k-1} w_{n+j(\ell-1)}\bmod \kappa \ .
\end{equation}
This allows for a straightforward computation of the induced prefix permutations. For $k=0\ld \ell-2$,
we let
\begin{equation}
  t^k(w) \ = \ \sum_{j=0}^{\lfloor (|w|-k-1)/(\ell-1)\rfloor} w_{k+j(\ell-1)} \ \bmod \kappa \ ,
\end{equation}
where $\lfloor\alpha\rfloor=\sup\{n\in\Z\mid n\leq \alpha\}$ for all $\alpha\in\R$. In other words,
$t^k(w)$ is the sum of the entries of $w$ over all positions in the set
$\lb k+(\ell-1)\Z\rb \cap [0,|w|-1]$.  Then, if $|w|$ is a multiple of $\ell-1$ (which is the simpler
case) we have
\begin{equation} \label{e.return_permutation_formula_1} \Phi_w(u_0\ld u_{\ell-2}) \ =
\ (u_{0}\ld u_{\ell-2}) + (t^0(w)\ld t^{\ell-2}(w)) \ ,
\end{equation}
(with addition is carried out in $\cA_\kappa^{\ell-1}\simeq \Z_\kappa^{\ell-1}$). In general, we have
\begin{equation}\label{e.return_permutation_formula_2} 
  \begin{split} & \Phi_w(u_0\ld u_{\ell-2})  \ = \ (u_{|w|\bmod (\ell-1)}\ld u_{\ell-2},u_{0}\ld u_{|w|-1\bmod(\ell-1)})\\
   & \qquad + (t^{|w|\bmod (\ell-1)}(w)\ld t^{\ell-2}(w),t^{0}(w)\ld t^{|w|-1\bmod(\ell-1)}(w)) \ .
  \end{split}
\end{equation}
\begin{remark}\label{r.transitive_permutation_pair}
  For the application of Theorem~\ref{t:intro_minimality}, we need to ensure that the family of return
  permutations for a given word $w$ acts transitively on $\cA_\kappa^{\ell-1}$. This is certainly true
  if it contains the two permutations
  \[
  \begin{split}
  & \pi : (u_0,u_1\ld u_{\ell-2})\mapsto (u_1\ld u_{\ell-2},u_0) \eqand\\ &  \pi': (u_0,u_1\ld u_{\ell-2})\mapsto
  (u_0\ld u_{k-1},u_k+1,u_{k+1}\ld u_{\ell-2}) 
  \end{split}
  \]
  for some $k\in\{0\ld \ell-2\}$.  When $h=h_{\kappa,\ell}$ for some $\kappa,\ell\geq 2$, so that the
  return permutations are given by (\ref{e.return_permutation_formula_1}) and
  (\ref{e.return_permutation_formula_2}), this is always the case if there exist $v,v'\in\cA_\kappa^*$
  such that
  \begin{itemize}
    \item $wvw,wv'w\in\cW(Y)$,
    \item $|wv|\bmod \kappa=1$;
    \item $(t^0(wv),t^1(wv)\ld t^{\ell-2}(wv))=(0,0\ld 0)$;
    \item $|wv'| \bmod \kappa=0$;
    \item $\exists! k\in\{0\ld \ell-2\}: t^k(wv')=1$ and $t^j(wv')=0$ for all $j\neq k$.
  \end{itemize}
  In this case, we have $\Phi_{wv}=\pi$ and $\Phi_{wv'}=\pi'$. This will be instrumental for the
  application to substitution subshifts in Section~\ref{SubstitutionApplications} below.
\end{remark}
In the special case $\ell=2$, all induced prefix permutations are simply rotations on the cyclic group
$A_\kappa\simeq \Z_\kappa$. More precisely, we have
\[
     \Phi_w(a) \ = \ a+\sum_{j=0}^{|w|-1}w_j \ \bmod \kappa \ .
     \]
     This means, in particular, that all induced prefix permutations commute with each
     other. Moreover, in this case the skew product map $T$ in
     Theorem~\ref{t:skew_product_representation} is a group extension, since all fibre maps
     $\Phi_{y_0}:a\mapsto a+y_0\bmod \kappa$ are rotations on the cyclic group $\cA_\kappa$.  If
     $w\in\cA_\kappa^*$ contains a subword $\tilde w\triangleleft w$, say $w=w'\tilde{w}w''$, and
     $wvw\in\cW(Y)$, then we also have $\tilde w(w''vw')\tilde w\in\cW(Y)$ (since this is a subword of
     $wvw$) and (\ref{e.induced_permutation_composition}) together with the commutativity yields
\[
\Phi_{\tilde w(w''vw')} \ = \ \Phi_{w'}\circ \Phi_{\tilde w(w''vw')} \circ \Phi_{w'}^{-1} \ = \ \Phi_{wv} \ .
\]
As a consequence, we obtain $\cR(w)\ssq \cR(\tilde w)$. Further, in the situation of
Theorem~\ref{t:intro_minimality}(iii), the minimality of $Y$ together with the fact that
$|w^{(n)}|\nKonv \infty$ imply that for each $n\in\N$ there exists $N\geq n$ such that
$w^{(n)}\triangleleft w^{(k)}$ holds for all $k\geq N$. This leads to the following simplified
criterion for the minimality of lifts under the sliding block codes $\eta_{2,\kappa}$, $\kappa\in\N$.
\begin{prop}\label{p.h2lift_substitution_minimality}
  Let $Y\ssq \cA_\kappa^\Z$ be a minimal subshift, $\kappa\geq 2$ and $X=\eta_{2,\kappa}^{-1}(Y)$.
  Then $X$ is minimal if and only if there exists a sequence of words $w^{(n)},v^{(n)}\in\cW(Y)$ such
  that
  \begin{itemize}
  \item[(i)] $w^{(n)}v^{(n)}w^{(n)}\in\cW(Y)$ for all $n\in\N$;
  \item[(ii)] $|w^{(n)}|\nKonv \infty$;
  \item[(iii)] the sequence $(t_n)_{n\in\N}$ in $\Z_\kappa$ given by
  \begin{equation}\label{e.h2_substitution_minimality_def_tn}
  t_n \ = \ t^0\left(w^{(n)}v^{(n)}\right) \ = \ \sum_{k=0}^{|w^{(n)}v^{(n)}|-1}
  \left(w^{(n)}v^{(n)}\right)_k \bmod \kappa \
  \end{equation}
  has the property that $\{t_k\mid k\geq N\}$ generates $\Z_\kappa$ for all $N\in\N$.
  \end{itemize}
\end{prop}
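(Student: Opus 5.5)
The plan is to reduce both directions to Theorem~\ref{t:intro_minimality}. For $\ell=2$ every prefix permutation is a rotation $a\mapsto a+t^0(w)$ of $\Z_\kappa$. So a set of return permutations acts transitively on $\cA_\kappa$ if and only if the corresponding rotation amounts generate $\Z_\kappa$. This holds because the subgroup generated by a family of rotations is the rotation group by the subgroup generated by the amounts, and a subgroup of rotations is transitive iff it is all of $\Z_\kappa$.

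For the direction ``minimal $\follows$ sequence exists'', I assume $X$ is minimal and fix any sequence $w^{(n)}\in\cW(Y)$ with $|w^{(n)}|\to\infty$ (for instance prefixes of a point of $Y$). By (i)$\follows$(ii) of Theorem~\ref{t:intro_minimality}, each $\cR(w^{(n)})$ acts transitively, so the rotation amounts $t^0(w^{(n)}v)$ with $w^{(n)}vw^{(n)}\in\cW(Y)$ generate $\Z_\kappa$. Since $\Z_\kappa$ is finite, finitely many such $v$ suffice, say $v_1\ld v_r$ with $r\le\kappa$. I then build the sequence $(w^{(n)},v^{(n)})$ by repeating each $w^{(n)}$ $r$ times, paired successively with $v_1\ld v_r$. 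With this enumeration, every tail $\{t_k\mid k\ge N\}$ contains a full generating block for some $n$, which gives (iii); (i) and (ii) are immediate.

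For the converse, I assume the sequence exists and aim to verify Theorem~\ref{t:intro_minimality}(iii) for suitable words. The key tool is the observation preceding the proposition: by commutativity, $\cR(w)\ssq\cR(\tilde w)$ whenever $\tilde w\triangleleft w$. Fix $m$ and set $\tilde w=w^{(m)}$. By minimality of $Y$ and $|w^{(k)}|\to\infty$, there is $N$ with $\tilde w\triangleleft w^{(k)}$ for all $k\ge N$. Hence $\Phi_{w^{(k)}v^{(k)}}\in\cR(\tilde w)$ for all $k\ge N$, so the rotation amounts in $\cR(\tilde w)$ contain $\{t_k\mid k\ge N\}$. That set generates $\Z_\kappa$ by (iii), so $\cR(w^{(m)})$ acts transitively. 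Since $|w^{(m)}|\to\infty$, condition (iii) of Theorem~\ref{t:intro_minimality} holds and $X$ is minimal.

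The main technical point is the inclusion $\cR(w)\ssq\cR(\tilde w)$. It needs care: if $w=w'\tilde w w''$ and $wvw\in\cW(Y)$, then $\tilde w(w''vw')\tilde w\in\cW(Y)$. Moreover, $t^0(\tilde w w''vw')=t^0(w'\tilde ww''v)$ because $t^0$ is just a letter sum mod $\kappa$, hence invariant under cyclic rearrangement. This makes the argument cleaner than the conjugation identity quoted in the text. The second point to check is the uniform-recurrence claim that $w^{(m)}$ occurs in all sufficiently long words of $\cW(Y)$, which follows from minimality of $Y$.
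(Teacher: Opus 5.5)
Your proof is correct, and your sufficiency argument matches the paper's exactly: the inclusion $\cR(w^{(k)})\ssq\cR(w^{(n)})$ for large $k$ via commutativity and uniform recurrence, then Theorem~\ref{t:intro_minimality}(iii). For necessity, the paper instead takes $w^{(n)}=y_{[0,n]}$ and uses minimality of $X$ to locate $\xi^1(w^{(n)})$ inside $\xi^0(y)$, which gives a single return word with $\Phi_{w^{(n)}v^{(n)}}(0)=1$, i.e.\ $t_n=1$ for all $n$, avoiding your re-indexing; your repetition device is a valid substitute and needs only transitivity of the generated group.
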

  \begin{proof}
    Suppose that (i)--(iii) hold and fix $n\in\N$. By minimality, we can choose $N\geq n$ such that
    $w^{(n)}\triangleleft w^{(k)}$ for all $k\geq N$. Then, by the above, we have
    $\cR(w^{(k)})\ssq \cR(w^{(n)})$ for all $k\geq N$. Therefore $\cR(w^{(N)})$ contains all the
    permutations
    \[
    \Phi_{v^{(k)}w^{(k)}} : \cA_\kappa\to\cA_\kappa \quad , \quad a \mapsto a+t_k \bmod \kappa \ 
    \]
    with $k\geq N$. Since $\{t_k\mid k\geq N\}$ generates $\Z_k$, this implies that $\cR(w^{(n)})$
    acts transitively on $\cA_\kappa$. Hence, Theorem~\ref{t:intro_minimality}(iii) yields the
    minimality of $X$.\smallskip

    Conversely, suppose that $X$ is minimal. Choose some $y\in Y$ and let $w^{(n)}=y_{[0,n]}$. Then by
    minimality $\xi^1\lb w^{(n)}\rb$ occurs in $\xi^0(y)$, say
    $\xi^1\lb w^{(n)}\rb = \xi^0(y)_{[k,k+n]}$, where we can assume without loss of generality that
    $k>n$. If we let $v^{(n)}=y_{[n+1,k-1]}$, so that $w^{(n)}v^{(n)}=y_{[0,k-1]}$, then we obtain
    that $y_{[0,k+n]}=w^{(n)}v^{(n)}w^{(n)}$ and
    \[
      \Phi_{w^{(n)}v^{(n)}}(0) \ = \ \Phi_{y_{[0,k-1]}}(0) \ = \ 1 \ ,
    \]
    which further implies $t_n=t^0\lb w^{(n)}v^{(n)}\rb=1$. Hence, properties (i)--(iii) are
    satisfied.
  \end{proof}

  Note that in the special case $\kappa=2$, Corollary~\ref{c.intro_minimality_criterion} follows as a
  direct consequence.

  %%%%%%%%%%%%%%%%%%%%%%%%%%%%%%%%%%%%%%%%%%%%%%%%%%%%%%%%%%%%%%%%%%%%%%%%%%%%%%%%%%%%%%%%
  
  \subsection{Applications to substitution systems} \label{SubstitutionApplications}

  The aim of this section is to illustrate the above minimality criteria by the application to some
  well-known substitution systems. More precisely, we will have a look at the the noble means
  substitution with parameter $p\in\N$ ($\mathrm{nm},p$) (which includes the Fibonacci and the silver
  mean substitution), the Tribonacci substitution ($\mathrm{Tri}$) and the period doubling
  substitution ($\mathrm{pd}$). It will become clear from the proofs below that there is considerable
  potential for further generalisations. However, our intention here is not to perform a systematic
  study of the largest-possible classes of substitutions to which our methods can be applied -- this
  will be subject of subsequent work \cite{GohlkeJaegerKang2026PermutativeSubstitutionLifts}. Instead,
  the results presented below should rather be understood as a `proof of principle'.

  In order to avoid redundancies, we fix some uniform notation for all examples. First, the respective
  substitution rule will be denoted by $\Gamma_\alpha$, where
  $\alpha\in\{(\mathrm{nm},p),\mathrm{Tri},\mathrm{pd}\}$. The $n$-th substitution word is defined as
  \begin{equation}\label{e.def_substitution_words}
    \tau^{(n)}\ = \ \Gamma^n_\alpha(0) \ ,
  \end{equation}
  with the exception of period doubling, where we let $\tau^{(n)}=\Gamma_\mathrm{pd}^n(1)$ (see
  Section~\ref{PeriodDoubling} for an explanation). Note that we keep the dependence of the
  substitution words on $\alpha$ implicit.  Each of these substitutions induces a subshift $X_\alpha$,
  which is defined via (\ref{e.permitted_words_subshift}) as $X_\alpha = X_{W(\alpha)}$, where
  $W(\alpha)=\left\{w\in\cA^*_\kappa\mid \exists n\in\N: w\triangleleft \tau^{(n)}\right\}$.  By
  \begin{equation}
    \label{e.def_substitution_word_length} r_n \ = \ |\tau^{(n)}| \ \bmod (\ell-1)\  , 
  \end{equation}
  we denote the length of the $n$-th substitution word $\bmod \ell-1$. Together with the coefficients
  \begin{equation}\label{e.def_return_permutation_parameters}
    t^k_n \ = \ t^k\lb \tau^{(n)}\rb \ = \ \sum_{j=0}^{\lfloor (|w^{(n)}|-k-1)/(\ell-1)\rfloor}
    \tau^{(n)}_{k+j(\ell-1)} \ \bmod \kappa \ ,
  \end{equation}
  for $n\in\N$ and $k=0\ld \ell-2$, these parameters determine the prefix permutations
  $\Phi_{\tau^{(n)}}$, as described by (\ref{e.return_permutation_formula_1}) and
  (\ref{e.return_permutation_formula_2}) above. When $\ell=2$, we also write
  \begin{equation} \label{e.def_substitution_word_index_sum}
    t_n \ = \ \sum_{j=0}^{|\tau^{(n)}|-1}\tau^{(n)}_j 
  \end{equation}
  instead of $t_n^0$.  We refer to \cite{BaakeGrimm2013AperiodicOrder,Bruin2022SymbolicDynamics} for a
  general background on substitution systems, as well as for further specific information about the
  examples discussed below.

%%%%%%%%%%%%%%%%%%%%%%%%%%%%%%%%%%%%%%%%%%%%%%%%%%%%%%%%%%%%%%%%%%%%%%%%%%%%%%%%%%%%%%%%

\subsubsection{Noble means substitutions} \label{NobleMeans}

The {\em noble means substitution} with parameter $p\in\N$ is given by the substitution rule
\begin{equation}
\Gamma_{\mathrm{nm},p} : \cA_2\to\cA_2^2 \quad , \quad \left\{
\begin{split}
  0 & \mapsto & 0^p1\\
  1 & \mapsto & 0
\end{split} \right\} \ .
\end{equation}
Note that for $p=1$, we obtain the Fibonacci substitution $0\mapsto 01,\ 1\mapsto 0$. Moreover,
$\Gamma_{\mathrm{nm},2}$ is conjugate to the {\em silver mean substitution}
$ \Gamma_{\mathrm{silver}} : \cA_2\to\cA_2^2, \ 0 \mapsto 010,\ 1 \mapsto 0$ by the concatenation with
$0$ from the left, respectively the right, in the sense that
$\Gamma^n_{\mathrm{nm},2}(0)0=0\Gamma^n_\mathrm{silver}(0)$. Therefore, if $X_\mathrm{silver}$ denotes
the subshift induced by the silver means substitution, then $X_{\mathrm{nm},2}=X_\mathrm{silver}$ (see
\cite[Section 4.4]{BaakeGrimm2013AperiodicOrder} for details). Consequently, we do not have to
distinguish between these examples and only treat the general case of the noble means substitutions.
The substitution words satisfy the recursive equation
\begin{equation}\label{e.noble_means_recursive_equation}
  \tau^{(n+1)} \ = \ \left(\tau^{(n)}\right)^p\tau^{(n-1)} \ .
\end{equation}

\begin{thm}\label{t.noble_means_minimality}
  For all parameters $p,\kappa,\ell\geq 2$, the subshift $X=\eta^{-1}_{\kappa,\ell}\lb
  X_{\mathrm{nm},p}\rb$ is minimal.
\end{thm}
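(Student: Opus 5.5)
We will apply Theorem~\ref{t:intro_minimality}, criterion (iv), to the nested sequence of substitution words $w^{(n)}=\tau^{(n)}$. Each $\tau^{(n+1)}=(\tau^{(n)})^p\tau^{(n-1)}$ starts with $\tau^{(n)}$, and the lengths tend to infinity. So it suffices to show that for every $n$ the family $\bigcup_{k\geq n}\cR(\tau^{(k)})$ generates a subgroup of $\Per(\cA_\kappa^{\ell-1})$ acting transitively. The natural return words come from the recursion. Since $p\geq 2$, the word $\tau^{(k)}\tau^{(k)}$ is a subword of $\tau^{(k+1)}$, so $\Phi_{\tau^{(k)}}\in\cR(\tau^{(k)})$. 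Moreover, $\tau^{(k+1)}\tau^{(k)}=(\tau^{(k)})^p\tau^{(k-1)}\tau^{(k)}$ is a subword of $\tau^{(k+2)}=(\tau^{(k+1)})^p\tau^{(k)}$. Hence $\tau^{(k)}\tau^{(k-1)}\tau^{(k)}\in\cW(Y)$, which gives $\Phi_{\tau^{(k)}\tau^{(k-1)}}=\Phi_{\tau^{(k-1)}}\circ\Phi_{\tau^{(k)}}\in\cR(\tau^{(k)})$. By Proposition~\ref{p.intro_prefix_permutations}, the group generated by these return permutations for all $k\geq n$ contains every $\Phi_{\tau^{(k)}}$ with $k\geq n-1$.

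The problem therefore reduces to a question about the affine maps given by (\ref{e.return_permutation_formula_2}). Each $\Phi_{\tau^{(k)}}$ has the form $u\mapsto \pi^{r_k}(u)+t_k$, where $\pi$ is the cyclic shift of coordinates, $r_k=|\tau^{(k)}|\bmod(\ell-1)$, and $t_k=(t^{r_k}_k,\ldots)\in\Z_\kappa^{\ell-1}$ collects the sums $t^j(\tau^{(k)})$ in the rotated order. We must show that, for every $n$, the group $G_n$ generated by $\{\Phi_{\tau^{(k)}}\mid k\geq n\}$ acts transitively on $\Z_\kappa^{\ell-1}$. Since $G_n$ is contained in the affine group $\Z_\kappa^{\ell-1}\rtimes\langle\pi\rangle$, we only need its translation part, together with the rotations, to reach everything. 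Concretely, it is enough to exhibit in $G_n$ the cyclic shift $\pi$ and a translation by a unit vector $e_j$. Then the $\pi$-conjugates of that translation give all $e_j$, and the remark on the pair $\pi,\pi'$ (Remark~\ref{r.transitive_permutation_pair}) applies.

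To produce these elements, we track the pairs $(r_k,t_k)$ through the recursion. Using $\Phi_{\tau^{(k+1)}}=\Phi_{\tau^{(k-1)}}\circ\Phi_{\tau^{(k)}}^p$, the affine data satisfy a linear recurrence of the form $r_{k+1}=pr_k+r_{k-1}$ modulo $\ell-1$, together with a similar recurrence for the translation parts, twisted by powers of $\pi$. Both recurrences live on finite sets, so the sequence of pairs $(\Phi_{\tau^{(k)}},\Phi_{\tau^{(k-1)}})$ is eventually periodic. Because the recursion matrix $\begin{pmatrix}p&1\\1&0\end{pmatrix}$ is invertible, it is in fact purely periodic. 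Consequently $G_n$ is the same group $G$ for all $n$, namely the one generated by all $\Phi_{\tau^{(k)}}$, including $\Phi_0=\Phi_{\tau^{(0)}}$ and $\Phi_{\tau^{(1)}}=\Phi_0^p\Phi_1$, where $\Phi_a$ denotes the one-letter prefix permutation. Inverting the recursion then shows $\Phi_1\in G$. Thus $G$ contains the one-letter permutations $\Phi_0:u\mapsto\pi(u)+(0,\ldots,0)$ and $\Phi_1:u\mapsto\pi(u)+(0,\ldots,0,1)$, with the single $1$ in the last coordinate (this should be checked against formula (\ref{e.return_permutation_formula_2})).

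From these, $\Phi_0=\pi$ lies in $G$, and $\Phi_1\circ\Phi_0^{-1}$ is a translation by a unit vector. Hence $G$ contains all translations, so it is transitive, and criterion (iv) gives minimality of $X$. The main obstacle is the periodicity/invertibility step: we must justify carefully that the tail groups $G_n$ do not shrink, that is, that $\Phi_{\tau^{(n-1)}}$ can be recovered from the $\Phi_{\tau^{(k)}}$ with $k\geq n$. The clean route is to observe that the map $(\Phi_{\tau^{(k)}},\Phi_{\tau^{(k-1)}})\mapsto(\Phi_{\tau^{(k+1)}},\Phi_{\tau^{(k)}})$ is a bijection of the finite set of pairs of permutations, since $\Phi_{\tau^{(k-1)}}=\Phi_{\tau^{(k+1)}}\circ\Phi_{\tau^{(k)}}^{-p}$. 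Every orbit of a bijection on a finite set is periodic, so the initial pair $(\Phi_{\tau^{(1)}},\Phi_{\tau^{(0)}})$ recurs at arbitrarily large $k$.
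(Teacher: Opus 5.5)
Your proof is correct and has the same skeleton as the paper's proof. Both use criterion (iv) of Theorem~\ref{t:intro_minimality} along the nested words $\tau^{(n)}$. Both use periodicity of $(\Phi_{\tau^{(n)}})_n$, so that tail families lose no generators. Both get transitivity from $\Phi_0$ and $\Phi_{\tau^{(1)}}$. The paper takes $w^{(n)}=\tau^{(n-1)}$ with the single return word $\tau^{(n)}$, justified by $\tau^{(n)}\tau^{(n-1)}\in\cW(Y)$. Your pair of return words $\tau^{(k)}$ and $\tau^{(k)}\tau^{(k-1)}$ for $\tau^{(k)}$ does the same job.

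\textbf{Where you differ: the periodicity step.} The paper works in coordinates. It writes recursions for the residues $r_n$ and for the vectors $V_n$ of the sums $t^k_n$. It then argues that the transfer matrices $A_{r_n}$ are invertible over $\Z_\kappa$, because the lowest-index coefficients enter only once. You instead observe that $(A,B)\mapsto(B\circ A^p,A)$ is a bijection of $\Per(\cA_\kappa^{\ell-1})^2$, with inverse $(C,A)\mapsto(A,C\circ A^{-p})$. This encodes the same structural fact: $\tau^{(n-1)}$ occurs exactly once in $\tau^{(n+1)}=(\tau^{(n)})^p\tau^{(n-1)}$. But it uses no formula for the prefix permutations, so it works verbatim for every permutative $h$. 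Two consequences follow:
\begin{itemize}
\item Combined with (i)$\Rightarrow$(ii) applied to $w=0$, your argument shows that for any permutative $h$ the lift of $X_{\mathrm{nm},p}$ is minimal if and only if $\langle\Phi_0,\Phi_1\rangle$ acts transitively.
\item Setting $\tau^{(-1)}=1$, it directly yields a period $q$ with $(\Phi_{\tau^{(q)}},\Phi_{\tau^{(q-1)}})=(\Phi_0,\Phi_1)$. This is exactly what Section~\ref{UniqueErgodicityForSubstitutions} uses.
\end{itemize}
The coordinate approach, in exchange, gives explicit control of the $t^k_n$. It is also the template the paper reuses for period doubling. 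There the analogous map $(A,B)\mapsto(B^2\circ A,A)$ is not a bijection of all pairs, and invertibility genuinely depends on arithmetic ($\det A=4$, so $\kappa$ must be odd).

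\textbf{Small points.}
\begin{itemize}
\item By Proposition~\ref{p.intro_prefix_permutations}, $\Phi_{\tau^{(1)}}=\Phi_{0^p1}=\Phi_1\circ\Phi_0^p$, not $\Phi_0^p\Phi_1$. This is harmless for group membership.
\item Your guess is right: $\Phi_1(u)=(u_1,\ldots,u_{\ell-2},u_0+1)$.
\item The sentence attributing pure periodicity to the invertibility of the matrix for the $r_k$ does not justify it on its own. That matrix governs only the rotation parts, not the translation parts. The bijection argument in your last paragraph is the actual proof, so that sentence can be deleted.
\end{itemize}
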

\proof\quad In order to apply Theorem~\ref{t:intro_minimality}(iv), we let $w^{(n)}=\tau^{(n-1)}$
and $v^{(n)}=\lb\tau^{(n-1)}\rb^{p-2}\tau^{(n-2)}$, so that we obtain
$w^{(n)}v^{(n)}=\tau^{(n)}$. Note that in addition we have
$w^{(n)}v^{(n)}w^{(n)}=\tau^{(n)}\tau^{(n-1)}\in\cW\lb X_{\mathrm{nm},p}\rb$, which means that
$\Phi_{\tau^{(n)}}\in\cR\lb w^{(n)}\rb$.

For the length parameters $r_n$, (\ref{e.noble_means_recursive_equation}) yields $r_{n+1}=p\cdot
r_n+r_{n-1}\bmod \kappa$. Consequently, we have
\[
\twovector{r_n}{r_{n+1}} \ = \ \Matrix{cc}{0&1\\1&p} \cdot \twovector{r_{n-1}}{r_n} \ .
\]
As the matrix $\Matrix{cc}{0&1\\1&p} $ acting on the finite space $\Z_{\ell-1}^2$ is invertible, the
sequence \nfolge{r_n} must be periodic.  For the parameters $t^k_n$ defined by
(\ref{e.def_return_permutation_parameters}), we obtain the recursive formulas
    \begin{equation}\label{e.noblemeans_general_recursion}
  t^k_{n+1} \ = \ \left(\sum_{j=0}^{p-1} t^{k-jr_n\bmod (\ell-1)}_{n}\right) +
  t_{n-1}^{k-pr_n\bmod (\ell-1)} \ \bmod \kappa \ .
  \end{equation}
  If we define vectors $V_n=\lb t^0_{n-1}\ld t^{\ell-2}_n,t^0_n\ld t^{\ell-2}_n\rb^t$, then
  (\ref{e.noblemeans_general_recursion}) translates to
    \[
        V_{n+1} \ = \ A_{r_n} \cdot V_n \ ,
        \]
        where the matrices $A_{r_n}$ acting on $\Z_\kappa^{2(\ell-1)}$ are again invertible. The
        latter comes from the fact that the parameters $t^k_{n-1}$, $k=0\ld \ell-2$, only appear once
        in the equations (\ref{e.noblemeans_general_recursion}), and therefore $V_n$ can be recovered
        from the data contained in $V_{n+1}$. Alternatively, one may also compute the determinants of
        the $A_ {r_n}$ over the ring $\Z_\kappa$, which are non-vanishing.

        As the matrices $A_{r_n}$ form a periodic sequence acting on a finite space, the vectors $V_n$
        must be periodic. This further yields the periodicity of the induced prefix permutations
        $\Phi_{\tau^{(n)}}$ via (\ref{e.return_permutation_formula_2}). Therefore, it suffices to show
        that the family of permutations $\left\{\Phi_{\tau^{(n)}}\mid n\in\N\right\}$ acts
        transitively on $\cA^{\ell-1}$. However, this is already true for the two permutations
   \begin{eqnarray*}
     \Phi_{\tau^{(0)}} = \Phi_0 & : & \quad u_0\ldots u_{\ell-2} \ \mapsto \ u_1\ldots u_{\ell-2}u_0
     \ , \\ \Phi_{\tau^{(1)}} = \Phi_{0^{p-1}1} & : & \quad u_0\ldots u_{\ell-2} \ \mapsto
     \ u_{p\bmod(\ell-1)}\ldots u_{\ell-2}u_0 \ldots \lb u_{p-1\bmod \ell-1}+1\rb \ .
   \end{eqnarray*}
   Consequently, Theorem~\ref{t:intro_minimality} implies the minimality of
   $X=\eta^{-1}\lb X_{\mathrm{nm},p}\rb$ as required.\qed 

%%%%%%%%%%%%%%%%%%%%%%%%%%%%%%%%%%%%%%%%%%%%%%%%%%%%%%%%%%%%%%%%%%%%%%%%%%%%%%%%%%%%%%%%

\subsubsection{Tribonacci substitution}\label{Tribonacci}

The {\em Tribonacci subsitution} on three symbols is given by
\begin{equation}
\Gamma_{\mathrm{Tri}} : \cA_3\to\cA_3^2 \quad , \quad \left\{
\begin{split}
  0 & \mapsto & 01\\
  1 & \mapsto & 02\\
  2 & \mapsto & 0 
\end{split} \right\} \ .
\end{equation}
For the substitution words, we have the recursive formula
\begin{equation}\label{e.recursion_tribonacci}
  \tau^{(n+1)} \ = \ \tau^{(n)}\tau^{(n-1)}\tau^{(n-2)} \ . 
\end{equation}
The minimality of the lifts of the {\em Tribonacci subshift} $X_\mathrm{Trib}$ follows by the same
general pattern as in the previous example. The decisive observation is that the substitution word
with the lowest index appears only once in the recursive rule (\ref{e.recursion_tribonacci}), which
ensures the invertibility of the `transfer matrices'. Again, we provide some details in the proof
below.
\begin{thm}\label{t.Tribonacci_minimality}
  For all integers $\kappa\geq 3$ and $\ell\geq 2$, the subshifts $X=\eta^{-1}_{\kappa,\ell}\lb
  X_\mathrm{Tri}\rb$ are minimal.
\end{thm}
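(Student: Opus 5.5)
We follow the pattern of the proof of Theorem~\ref{t.noble_means_minimality} and apply Theorem~\ref{t:intro_minimality}(iv). We take $w^{(n)}=\tau^{(n-1)}$ for $n\geq 1$. Since every substitution word starts with $0$, the word $\tau^{(n)}=\Gamma_{\mathrm{Tri}}^{n-1}(01)$ starts with $\tau^{(n-1)}$. Hence the $w^{(n)}$ are nested and $|w^{(n)}|\to\infty$. Next, by (\ref{e.recursion_tribonacci}) we have $\tau^{(n)}=\tau^{(n-1)}v^{(n)}$ with $v^{(n)}=\tau^{(n-2)}\tau^{(n-3)}$ (for $n\geq 3$). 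Moreover, $\tau^{(n)}\tau^{(n-1)}$ is a prefix of $\tau^{(n+1)}$, so $w^{(n)}v^{(n)}w^{(n)}\in\cW(X_{\mathrm{Tri}})$. This gives $\Phi_{\tau^{(n)}}\in\cR(w^{(n)})$. The first few indices can be handled directly from $\tau^{(0)}=0$, $\tau^{(1)}=01$, $\tau^{(2)}=0102$, and they do not matter anyway in view of the periodicity argument below. Note that $\kappa\geq 3$ is needed only so that the letter $2$ belongs to $\cA_\kappa$, i.e.\ so that $X_{\mathrm{Tri}}\ssq\cA_\kappa^\Z$ makes sense.

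Second, we show that $n\mapsto\Phi_{\tau^{(n)}}$ is periodic, which yields $\bigcup_{k\geq n}\cR(w^{(k)})\supseteq\{\Phi_{\tau^{(m)}}\mid m\in\N\}$ for every $n$. The lengths satisfy $r_{n+1}=r_n+r_{n-1}+r_{n-2}\bmod(\ell-1)$. The companion matrix of this recursion has determinant $\pm1$ and is therefore invertible on $\Z_{\ell-1}^3$. From (\ref{e.recursion_tribonacci}) we get
\[
t^k_{n+1} \ = \ t^k_n + t^{k-r_n\bmod(\ell-1)}_{n-1} + t^{k-r_n-r_{n-1}\bmod(\ell-1)}_{n-2} \ \bmod\kappa \ .
\]
Put $V_n=(t^\cdot_{n-2},t^\cdot_{n-1},t^\cdot_n)\in\Z_\kappa^{3(\ell-1)}$. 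Then the combined state $(r_{n-2},r_{n-1},r_n,V_n)$ evolves under a fixed map on a finite set. This map is a bijection, because each $t^\cdot_{n-2}$ occurs exactly once in the recursion and can therefore be recovered from the next state. A bijection of a finite set is periodic, so the orbit of the state is purely periodic from the start. By (\ref{e.return_permutation_formula_2}), $\Phi_{\tau^{(n)}}$ is determined by $r_n$ and $t^\cdot_n$, so $\Phi_{\tau^{(n)}}$ is periodic in $n$ as well. I expect this bookkeeping, i.e.\ writing out the shifted indices correctly and checking invertibility, to be the main technical point. It is nevertheless routine and mirrors the noble means case.

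Finally, we check transitivity of $\{\Phi_{\tau^{(0)}},\Phi_{\tau^{(1)}}\}$. We have $\Phi_0=\pi$, the cyclic shift of Remark~\ref{r.transitive_permutation_pair}. For $\ell=2$, $\Phi_{01}(a)=a+1$, which is already transitive on $\Z_\kappa$. For $\ell\geq 3$, (\ref{e.return_permutation_formula_2}) gives $t^0(01)=0$ and $t^1(01)=1$, so that
\[
\Phi_{01}(u_0\ld u_{\ell-2}) \ = \ (u_{2\bmod(\ell-1)}\ld u_0,\,u_1+1)
\]
after the appropriate rotation. Composing with a suitable power of $\pi$ yields a map $\pi'$ that adds $1$ to a single coordinate. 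By Remark~\ref{r.transitive_permutation_pair}, $\pi$ and $\pi'$ generate a group acting transitively on $\cA_\kappa^{\ell-1}$. Therefore condition (iv) of Theorem~\ref{t:intro_minimality} holds, and $X$ is minimal.
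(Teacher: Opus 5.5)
Your proposal is correct and follows the paper's proof essentially step for step. You use the same choice $w^{(n)}=\tau^{(n-1)}$, $v^{(n)}=\tau^{(n-2)}\tau^{(n-3)}$, the same periodicity of $\Phi_{\tau^{(n)}}$ via invertibility of the transfer map (which holds because the lowest-index word appears only once), and the same transitivity from $\Phi_0$ and $\Phi_{01}$, feeding into Theorem~\ref{t:intro_minimality}(iv). Folding $(r_{n-2},r_{n-1},r_n)$ into the state, so that one bijection of a finite set drives everything, is a slightly cleaner phrasing of the paper's ``periodic sequence of invertible matrices'' argument, and you correctly take the length recursion modulo $\ell-1$ rather than modulo $\kappa$ as the paper misprints it.
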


\proof\quad We let $w^{(n)}=\tau^{(n-1)}$ and $v^{(n)}=\tau^{(n-2)}\tau^{(n-3)}$, so that we have
$w^{(n)}v^{(n)}=\tau^{(n)}$ and
$w^{(n)}v^{(n)}w^{(n)}=\tau^{(n)}\tau^{(n-1)}\in\cW\lb X_\mathrm{Trib}\rb$. The latter yields
$\Phi_{\tau^{(n)}}\in\cR\lb w^{(n)}\rb$.

The lengths $r_n$ satisfy the recursive equation $r_{n+1}= r_n+r_{n-1}+r_{n-2}\bmod \kappa$, so that
\[
\Matrix{c}{r_{n-1}\\r_n\\r_{n+1}} \ = \ \Matrix{ccc}{0&1& 0\\0&0& 1\\ 1 & 1 & 1} \cdot
\Matrix{c}{r_{n-2}\\r_{n-1}\\r_n} \ .
\]
Due to the bijectivity of the matrix, the sequence \nfolge{r_n} is periodic. For the parameters
$t^k_n$ given by (\ref{e.def_return_permutation_parameters}), we have the recursive equations
    \begin{equation}\label{e.tribonacci_general_recursion}
  t^k_{n+1} \ = \ t^k_n + t^{k-r_n\bmod (\ell-1)}_{n-1} +
  t_{n-2}^{k-r_n-r_{n-1}\bmod (\ell-1)} \ \bmod \kappa \ .
  \end{equation}
  Hence, the vectors $V_n=\lb t^0_{n-2}\ld t^{\ell-2}_{n-2},t^0_{n-1}\ld t^{\ell-2}_n,t^0_n\ld
  t^{\ell-2}_n\rb^t$ satisfy
    \[
        V_{n+1} \ = \ A_{r_n,r_{n-1}} \cdot V_n \ , 
        \]
        with invertible matrices $A_{r_n,r_{n-1}}$ determined by
        (\ref{e.tribonacci_general_recursion}). This yields the periodicity of $\nfolge{V_n}$, and
        hence of $\nfolge{\Phi_{\tau^{(n)}}}$. Consequently, it suffices again to show that the
        family of permutations $\left\{\Phi_{\tau^{(n)}}\mid n\in\N\right\}$ acts transitively on
        $\cA^{\ell-1}$. This is true since it contains the two permutations
   \begin{eqnarray*}
     \Phi_{\tau^{(0)}} = \Phi_0 & : & \quad u_0\ldots u_{\ell-2} \ \mapsto \ u_1\ldots u_{\ell-2}u_0
     \ , \\ \Phi_{\tau^{(1)}} = \Phi_{01} & : & \quad u_0\ldots u_{\ell-2} \ \mapsto
     \ u_{2}\ldots u_{\ell-2}u_0 \ldots \lb u_{1}+1\rb \ .
   \end{eqnarray*}
   As before, Theorem~\ref{t:intro_minimality} now implies the minimality of $X=\eta^{-1}_{\kappa, \ell}\lb
   X_{\mathrm{Tri}}\rb$.  \qed\medskip

%%%%%%%%%%%%%%%%%%%%%%%%%%%%%%%%%%%%%%%%%%%%%%%%%%%%%%%%%%%%%%%%%%%%%%%%%%%%%

 \subsubsection{Period doubling substitution}
  \label{PeriodDoubling}
  The {\em period-doubling substitution} is given by 
\begin{equation}
\Gamma_{\mathrm{pd}} : \cA_2\to \cA_2^2 \quad, \quad \left\{
\begin{split}
0 & \mapsto &11\\
1 & \mapsto &10
\end{split} \right\}.
\end{equation}
We note that in comparision with the usual representation of the period doubling substitution, we
interchange the roles of the symbols $0$ and $1$. The reason is the fact that in this way, the
Thue-Morse subshift can be obtained as the lift of $X_\mathrm{pd}$ under $\eta_{2,2}$. Otherwise, we
would have to replace $h_{2,2}$ by the block code $\tilde h_{2,2}:x_0x_1\mapsto x_1-x_0+1\bmod 2$ in
order to obtain the same result. If we let
\begin{equation}\label{e.def_substitution_words_pd}
\tau^{(n)}\ = \ \Gamma^n_{\mathrm{pd}}(1) \eqand \vartheta^{(n)}\ = \ \Gamma^n_{\mathrm{pd}}(0)\ , 
\end{equation}
then it is straightforward to check that
\begin{eqnarray}\label{e.period_doubling_recursive_equation}
\tau^{(n+1)} \ = \ \tau^{(n)}\vartheta^{(n)}, \
\vartheta^{(n+1)} \ = \ \tau^{(n)}\tau^{(n)} .
\end{eqnarray}
Alternatively, we can also represent $\tau^{(n+1)}$ as
\begin{equation}
  \label{e.period_doubling_recursive_equation_2}
  \tau^{(n+1)} \ = \ \tau^{(n)}\tau^{(n-1)}\tau^{(n-1)} \ .
\end{equation}
Moreover, we have $|\tau^{(n)}| = |\vartheta^{(n)}| = 2^n$ for every $n\in \N$.  We define a mapping
\begin{equation}
\breve \cdot : \cA_{\kappa}^{*}\to \cA_{\kappa}^{*} \quad, \quad w\mapsto \breve w = w_{[0, |w|-2]}
\ ,
\end{equation}
thus reducing the length of each finite word by omitting the last symbol.  Using the recursive
relations \eqref{e.period_doubling_recursive_equation}, a straightforward induction shows that, for
every $n\geq 1$,
\begin{equation}
\breve{\tau}^{(n)} \ = \ \breve{\vartheta}^{(n)},
\end{equation}
while
\begin{equation}
\tau^{(n)}_{2^n-1}\ = \ n + 1 \bmod 2 \ = \ 1 - \vartheta^{(n)}_{2^n-1}\ .
\end{equation}

In order to prove Theorem~\ref{t:intro_period_doubling}, we distinguish four cases ((1) $\ell$ even;
(2) $\ell=3,\ \kappa$ odd; (3) $\ell\geq 5$ odd; (4) $\ell=3,\ \kappa$ even) and treat each of them
separately.

\begin{prop} \label{p.period_doubling_case1}
  For any $\kappa\geq 2$ and any even $\ell\geq 2$, the lift
  $X = \eta_{\ell, \kappa}^{-1}(X_{\mathrm{pd}})$ of the period doubling subshift under
  $\eta_{\ell, \kappa}$ is minimal.
\end{prop}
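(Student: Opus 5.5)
We prove minimality by verifying criterion (iii) of Theorem~\ref{t:intro_minimality} for the words $w^{(n)}=\tau^{(n)}$, $n\geq 1$; these satisfy $|\tau^{(n)}|=2^n\to\infty$. Unlike the noble means case, we do not try to establish periodicity of the sequence $\Phi_{\tau^{(n)}}$. Here the natural transfer maps $I+S^{r}$ are in general not invertible over $\Z_\kappa$ when $\kappa$ is even, so that route is awkward. Instead, for each fixed $n$ we produce two explicit return permutations of $\tau^{(n)}$ and show directly that together they generate a transitive group. The two facts we use are that $\tau^{(n)}$ and $\vartheta^{(n)}$ differ only in their last letter, and that $|\tau^{(n)}|=2^n$ is coprime to $\ell-1$ because $\ell-1$ is odd.

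\emph{Step 1: return words.} From (\ref{e.period_doubling_recursive_equation}) we get
\[
\tau^{(n+2)}\ = \ \tau^{(n+1)}\vartheta^{(n+1)}\ = \ \tau^{(n)}\vartheta^{(n)}\tau^{(n)}\tau^{(n)} .
\]
Hence both $\tau^{(n)}\tau^{(n)}$ and $\tau^{(n)}\vartheta^{(n)}\tau^{(n)}$ lie in $\cW(X_\mathrm{pd})$. Taking $v=e$ and $v=\vartheta^{(n)}$ in the definition of $\cR(\tau^{(n)})$, we obtain $\Phi_{\tau^{(n)}}\in\cR(\tau^{(n)})$ and $\Phi_{\tau^{(n)}\vartheta^{(n)}}=\Phi_{\vartheta^{(n)}}\circ\Phi_{\tau^{(n)}}\in\cR(\tau^{(n)})$. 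It follows that the group $\langle\cR(\tau^{(n)})\rangle$ contains both $\Phi_{\tau^{(n)}}$ and $\Phi_{\vartheta^{(n)}}$.

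\emph{Step 2: comparing $\Phi_{\tau^{(n)}}$ and $\Phi_{\vartheta^{(n)}}$.} Fix a prefix $u$ and write $m=2^n$. Since $\breve\tau^{(n)}=\breve\vartheta^{(n)}$, the lifts $\xi^u(\tau^{(n)})$ and $\xi^u(\vartheta^{(n)})$ are built by the same induction up to position $m+\ell-3$, so they agree there. At the last position the defining relation $v_{m+\ell-2}=v_{m-1}+w_{m-1}$ of $h_{\kappa,\ell}$ applies. Because $\tau^{(n)}_{m-1}-\vartheta^{(n)}_{m-1}=\pm1$, the two lifts differ there by $\pm1$. Therefore
\[
\Phi_{\vartheta^{(n)}}(u)\ = \ \Phi_{\tau^{(n)}}(u)\pm e_{\ell-2},
\]
where $e_{\ell-2}$ denotes the last unit vector in $\Z_\kappa^{\ell-1}$. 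Consequently the group contains the translation $\theta: u\mapsto u+e_{\ell-2}$, since $\Phi_{\vartheta^{(n)}}\circ\Phi_{\tau^{(n)}}^{-1}=\theta^{\pm1}$.

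\emph{Step 3: transitivity.} By (\ref{e.return_permutation_formula_2}), $\Phi_{\tau^{(n)}}$ has the affine form $u\mapsto S^{r}u+c$. Here $S$ is the cyclic coordinate shift on $\Z_\kappa^{\ell-1}$, $r=2^n \bmod (\ell-1)$, and $c=(t^k(\tau^{(n)}))_k$ up to reordering. Conjugating the translation $\theta$ by powers of $\Phi_{\tau^{(n)}}$ produces translations by $S^{jr}e_{\ell-2}$ for all $j\in\N$. Since $\ell-1$ is odd, $\gcd(2^n,\ell-1)=1$, so $S^r$ generates all cyclic shifts. Hence every unit vector $e_k$, $k=0,\ldots,\ell-2$, is obtained as a translation. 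These translations generate the whole translation group of $\Z_\kappa^{\ell-1}$, which already acts transitively on $\cA_\kappa^{\ell-1}$. This establishes (iii) of Theorem~\ref{t:intro_minimality}, and minimality follows.

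The main point needing care is Step 2. One must verify precisely that changing only the final input letter changes only the final lifted letter, and that it does so by an additive unit in the coordinate $\ell-2$ of the suffix (after the index bookkeeping of (\ref{e.return_permutation_formula_2})). Step 3 additionally requires the conjugation computation to yield translations by the rotated vectors, which holds because the linear part $S^r$ of the affine map acts on the translation part. Everything else follows directly from earlier results.
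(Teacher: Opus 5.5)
Your proof is correct, but it takes a different route from the paper's.

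\textbf{The paper's route.} It applies criterion (iii) to the truncated words $\breve\tau^{(k_n)}$. Every block $\tau^{(k_n)}$ or $\vartheta^{(k_n)}$ starts with $\breve\tau^{(k_n)}$, so any block-aligned concatenation of $\ell-1$ such blocks inside $\tau^{(m_n+2)}=\tau^{(m_n)}\vartheta^{(m_n)}\tau^{(m_n)}\tau^{(m_n)}$, for a much larger $m_n$, is a return word. The paper picks two such words $v,v'$ of length $(\ell-1)2^{k_n}$. One straddles the $\tau^{(m_n)}\vartheta^{(m_n)}$ junction and the other the $\vartheta^{(m_n)}\tau^{(m_n)}$ junction, at the same relative position, so they differ in exactly one letter. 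Because $\ell-1$ divides their length, formula (\ref{e.return_permutation_formula_1}) makes $\Phi_v,\Phi_{v'}$ pure translations. These differ by the unit vector $e_{\alpha(i)}$, with $\alpha(i)=(i+1)2^{k_n}-1 \bmod (\ell-1)$. The target coordinate is steered by the position $i$ of the defect, and coprimality makes $\alpha$ bijective. Then $\Phi_v^{\kappa-1}\circ\Phi_{v'}$ is the unit translation.

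\textbf{Your route.} You work with $\tau^{(n)}$ itself and only two return permutations, $\Phi_{\tau^{(n)}}$ and $\Phi_{\tau^{(n)}\vartheta^{(n)}}$. From them you extract the single translation $\theta$. You then move it to every coordinate by conjugating with the affine map $\Phi_{\tau^{(n)}}$, whose linear part is the rotation $S^r$ with $\gcd(r,\ell-1)=1$.

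\textbf{Comparison.} Coprimality of $2^k$ and $\ell-1$ drives both arguments. You exploit the non-commutativity of the affine return permutations, whereas the paper deliberately restricts to commuting translations. Your version is shorter and avoids the auxiliary sequences $k_n,m_n$ and the block decomposition. In exchange, it needs the general affine form (\ref{e.return_permutation_formula_2}) and the conjugation bookkeeping. The paper's version only needs the simpler formula (\ref{e.return_permutation_formula_1}) and obtains each unit translation directly from two return permutations of one fixed length.
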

\begin{proof}
Choose a strictly increasing sequence $(k_n)_{n\in \N}$ of integers with $2^{k_0} > \ell-1$. We shall
apply Theorem~\ref{t:intro_minimality}(iii) with $w^{(n)} = \breve{\tau}^{(k_n)}$.

For each $n\in \N$, choose $m_n>k_n$ such that $2^{m_n} > (\ell-1)\cdot 2^{k_n}$. Since
$\tau^{(n)}_{2^n}$ alternates between $0$ and $1$, we may assume that 
\begin{equation}
  \label{e.period_doubling_even_assumption}
  \tau^{(m_n)}_{2^{m_n}-1} \ = \ 0 \eqand \vartheta^{(m_n)}_{2^{m_n}-1} \ = \ 1
\end{equation}
by increasing $m_n$ if necessary. Recall that by the recursive relations in
(\ref{e.period_doubling_recursive_equation}), we have 
\[
	\tau^{(m_n + 2)} \ = \ \tau^{(m_n)}\vartheta^{(m_n)}\tau^{(m_n)}\tau^{(m_n)}\ .
\]
We may further decompose $\tau^{(m_n+2)}$ into words of length $2^{k_n}$
\begin{equation}\label{e.decomposition_of_w^{(m_n+2)}}
  \begin{split}
	\tau^{(m_n+2)} \ = \ &  \underbrace{v^{(0)}\ldots v^{(2^{m_n - k_n}-1)}}_{\tau^{(m_n)}}
        \underbrace{v^{(2^{m_n - k_n})}\ldots v^{(2\cdot 2^{m_n-k_n}-1)}}_{\vartheta^{(m_n)}} \\ & 
        \underbrace{v^{(2\cdot2^{m_n - k_n})}\ldots v^{(3\cdot 2^{m_n - k_n}-1)}}_{\tau^{(m_n)}}
        \underbrace{v^{(3\cdot 2^{m_n - k_n})}\ldots v^{(4\cdot 2^{m_n - k_n})-1}}_{\tau^{(m_n)}} \ , 
  \end{split}
\end{equation}
where $v^{(i)} \in \left\{\tau^{(k_n)},\ \vartheta^{(k_n)}\right\}$ for $i\in \left\{0\ld 4\cdot
  2^{m_n - k_n}-1\right\}$.  Now fix $i\in {0,\ldots,\ell-2}$ and consider the subwords $v$ and $v'$
of $\tau^{(m_n+2)}$ of length $(\ell-1)\cdot 2^{k_n}$ given by 
\begin{eqnarray*}
 v  & = &  v^{(2^{m_n - k_n}-i-1)}\ldots v^{(2^{m_n - k_n}-i+\ell-3)}  \ , \\
 v' & = &  v^{(2\cdot2^{m_n - k_n}-i-1)}\ldots v^{(2\cdot2^{m_n - k_n}-i+\ell-3)}\ .
\end{eqnarray*}
Note that $v$ and $v'$ are exactly at distance $2^{m_n-k_n}$ of each other and
$v^{(2^{m_n-k_n}-1)}\in \left\{\tau^{(k_n)}, \vartheta^{(k_n)}\right\}$. Further, since
$\breve{\tau}^{(m_n)} = \breve{\vartheta}^{(m_n)}$, we have $v_j = v'_j$ if
$j\neq (i+1)\cdot 2^{k_n}-1$. Due to (\ref{e.period_doubling_even_assumption}), we have $m_n$,
$v_{(i+1)\cdot 2^{k_n}-1}= \tau^{(m_n)}_{2^{m_n}-1} = 0$ and
$v'_{(i+1)\cdot 2^{k_n}-1} = \vartheta^{(m_n)}_{2^{m_n}-1} =1$.  Moreover, it follows from the
decomposition \eqref{e.decomposition_of_w^{(m_n+2)}} that
$\Phi_v, \Phi_{v'}\in \cR\lb \breve{\tau}^{(k_n)}\rb$.  Since $\ell-1$ divides $|v|$,
formula~\eqref{e.return_permutation_formula_1} yields
\[
	\Phi_v(u_0\ld u_{\ell-2})  \ = \ (u_{0}\ld u_{\ell-2}) + (t^{0}(v)\ld  t^{\ell-2}(v)) \ .
\]
where the addition is taken coordinatewise modulo $\kappa$.  Since $|v| = |v'|$, the same formula
holds with $v$ replaced by $v'$.  As $\ell-1$ is odd, we have $\gcd(2^{k_n}, \ell-1)=1$. Hence, the
map
\[
  \alpha: \Z_{\ell-1}\to \Z_{\ell-1} \quad ,\quad j\mapsto (j+1)2^{k_n}-1 \bmod{\ell-1}
\]
is a bijection. From the observations above, it follows that $t^k(v) = t^k(v')$ whenever $k\neq
\alpha(i)$, while $t^{\alpha(i)}(v') = t^{\alpha(i)} + 1 \bmod \kappa$. Therefore, for every $u\in
\cA_{\kappa}^{\ell-1}$ and $k\neq \alpha(i)$.
\[
	\left(\Phi_v^{\kappa-1}\circ \Phi_{v'}(u)\right)_k \ = \ u_k + \kappa\cdot t^k(v) \bmod \kappa
        \ = \ u_k\ ,
\]
while at the same time
\[
\left(\Phi_v^{\kappa-1}\circ \Phi_{v'}(u)\right)_{\alpha(i)} \ = \ u_{\alpha(i)} + \kappa\cdot
t^{\alpha(i)}(v) + 1 \bmod \kappa \ = \ u_{\alpha(i)} +1 \bmod \kappa\ .
\]
As $\Phi_v,\Phi_{v'}\in\cR\lb \breve\tau^{(k_n)}\rb$ and $i\in \{0\ld \ell-2\}$ was arbitrary, this
shows that $\cR(\breve{\tau}^{(k_n)})$ acts transitively on $\cA_{\kappa}^{\ell-1}$ and thus
completes the proof.
\end{proof}

Next, we consider the case $\ell=3$ and $\kappa$ odd.

\begin{prop}
  Suppose that $\kappa\geq 2$ is odd. Then $X=\eta^{-1}_{\kappa,3}\lb X_\mathrm{pd}\rb$ is minimal.
\end{prop}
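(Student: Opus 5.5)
The plan is to mimic the proof of Proposition~\ref{p.period_doubling_case1}: apply Theorem~\ref{t:intro_minimality}(iii) with $w^{(n)}=\breve\tau^{(k_n)}$ for a strictly increasing sequence $k_n\geq 1$. Since $\ell-1=2$ and every return word we use will have length a multiple of $2^{k_n}$, hence even, formula~\eqref{e.return_permutation_formula_1} shows that all the return permutations we produce are translations
\[
u=(u_0,u_1)\ \mapsto\ (u_0+t^0(v),\,u_1+t^1(v)) \quad\text{on } \Z_\kappa^2 .
\]
Transitivity of $\cR(\breve\tau^{(k_n)})$ therefore reduces to showing that the translation vectors $(t^0(v),t^1(v))$ of the available return words generate $\Z_\kappa^2$. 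For this it suffices to exhibit two vectors: $(0,1)$ and some $(a,b)$ with $a$ a unit in $\Z_\kappa$.

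\emph{The vector $(0,1)$.} This comes from the argument of Proposition~\ref{p.period_doubling_case1}. Take the words $v,v'$ of length $2\cdot 2^{k_n}$ cut out of $\tau^{(m_n+2)}=\tau^{(m_n)}\vartheta^{(m_n)}\tau^{(m_n)}\tau^{(m_n)}$ at distance $2^{m_n}$ from each other, with $m_n$ chosen so that \eqref{e.period_doubling_even_assumption} holds. Because $\breve\tau^{(m_n)}=\breve\vartheta^{(m_n)}$, the words $v$ and $v'$ differ in exactly one position, of the form $(i+1)2^{k_n}-1$. Since $2^{k_n}$ is even, this position is odd, so only $t^1$ changes, by $1$. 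The element $\Phi_v^{\kappa-1}\circ\Phi_{v'}$ is then the translation by $(0,1)$. Note that, unlike the even-$\ell$ case, the map $\alpha$ is no longer bijective, so this step alone only moves the second coordinate.

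\emph{The vector $(a,b)$ with $a$ a unit.} This is the step where oddness of $\kappa$ enters. Take the return word $v=\tau^{(m)}$ for some $m>k_n$. Since $\tau^{(m)}\tau^{(m)}$ and $\tau^{(m)}\vartheta^{(m)}$ both occur in $X_\mathrm{pd}$ and both $\tau^{(m)}$ and $\vartheta^{(m)}$ begin with $\breve\tau^{(k_n)}$, we have $\Phi_{\tau^{(m)}}\in\cR(\breve\tau^{(k_n)})$. Because $\Gamma_\mathrm{pd}(a)=1a'$ for each letter, every even position of $\tau^{(m)}$ carries the symbol $1$. Hence
\[
t^0(\tau^{(m)})=2^{m-1}\bmod\kappa,
\]
which is a unit in $\Z_\kappa$ precisely because $\kappa$ is odd. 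Together with $(0,1)$, this translation generates $\Z_\kappa^2$, so $\cR(\breve\tau^{(k_n)})$ acts transitively for every $n$, and Theorem~\ref{t:intro_minimality} yields minimality.

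The main care point is the bookkeeping in the $(0,1)$ step: one has to check that the words $v,v'$ of length $2^{k_n+1}$ really are of the form $wv''$ with $wv''w\in\cW(X_\mathrm{pd})$ for $w=\breve\tau^{(k_n)}$. This should follow as before from the decomposition \eqref{e.decomposition_of_w^{(m_n+2)}} into blocks $\tau^{(k_n)},\vartheta^{(k_n)}$, each of which begins with $\breve\tau^{(k_n)}$.
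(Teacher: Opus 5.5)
Your proof is correct, but it takes a different route from the paper.

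\emph{The paper's route.} It applies Theorem~\ref{t:intro_minimality}(iv) with $w^{(n)}=\tau^{(n-1)}$ and return words $\tau^{(n)}=\tau^{(n-1)}\tau^{(n-2)}\tau^{(n-2)}$. From this decomposition it derives the recursion $t^j_{n+1}=t^j_n+2t^j_{n-1}$, whose transfer matrix has determinant $4$ and is therefore invertible over $\Z_\kappa$ for odd $\kappa$. This forces $(\Phi_{\tau^{(n)}})_{n\geq 1}$ to be periodic. It then only remains to check that the translations $\Phi_{\tau^{(1)}}$ (by $(1,0)$) and $\Phi_{\tau^{(2)}}$ (by $(2,1)$) generate $\Z_\kappa^2$.

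\emph{Your route.} You use criterion (iii) with $w^{(n)}=\breve\tau^{(k_n)}$ and exhibit explicit elements of $\cR(\breve\tau^{(k_n)})$ for every $n$. The first is the one-defect pair $v,v'$ from Proposition~\ref{p.period_doubling_case1}; you correctly note that the defect position $(i+1)2^{k_n}-1$ is odd, so only $(0,1)$ is produced. The second is $\Phi_{\tau^{(m)}}$ with $t^0(\tau^{(m)})=2^{m-1}$, read off from the fact that every even position of $\tau^{(m)}$ carries a $1$.

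\emph{What each buys.} Your argument dispenses with the transfer matrix and the periodicity argument. It pins the role of odd $\kappa$ down to the invertibility of $2^{m-1}$, which is the same arithmetic fact as $\det A=4$ being a unit. The cost is re-using the block bookkeeping of the even-$\ell$ case. The paper's argument additionally yields periodicity of the prefix permutations $\Phi_{\tau^{(n)}}$, which is the kind of information exploited in the linear-recurrence argument of Section~\ref{UniqueErgodicityForSubstitutions}.

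Incidentally, your even-position observation already suffices on its own, so the $(0,1)$ step can be dropped. The odd positions of $\tau^{(m)}$ carry $1-\tau^{(m-1)}_j$, which gives $t^1(\tau^{(m)})=(2^{m-1}+(-1)^m)/3$. The translation vectors of the return words $\tau^{(m)}$ and $\tau^{(m+1)}$ then have determinant $\pm 2^{m-1}$, a unit for odd $\kappa$.
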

\begin{proof}
  In this case, the proof is similar to that of Theorem~\ref{t.noble_means_minimality} for the noble
  means subshifts. We aim to apply Theorem~\ref{t:intro_minimality}(iv) with
  \[
    w^{(n)}\ = \ \tau^{(n-1)} \eqand v^{(n)} \ = \ \tau^{(n-2)}\tau^{(n-2)} \ .
  \]
  Note that thus $\Phi_{\tau^{(n)}}\in\cR(w^{(n)})$.  Since $|\tau^{(n)}|=2^n$ for all $n\in\N$ and
  $\ell-1=2$, we have $r_n=|\tau^{(n)}|\bmod (\ell-1)=0$ for all $n\geq 1$. The recursive equation
  (\ref{e.period_doubling_recursive_equation_2}) therefore yields
  \[
  t^0_{n+1} \ = \ t^0_n+2t^0_{n-1} \ \bmod \kappa \eqand t^1_{n+1} \ = \ t^1_n+2t^1_{n-1} \ . 
  \]
  for all $n\geq 2$. If we let $V_n=\lb t^0_n,t^1_n,t^0_{n-1},t^1_{n-1}\rb^t$, then this means
  \[
  V_{n+1} \ = \ A\cdot V_n
  \]
  with
  \[
         A \ = \ \Matrix{cccc}{1 & 0 & 2 & 0\\0 & 1 & 0 & 2\\1 & 0 & 0 & 0\\0 & 1 & 0 & 0} \ . 
         \]
         Now, the matrix $A$ is invertible over the ring $\cA_\kappa=\Z_\kappa$ (note that $\det(A)=4$
         and $\kappa$ is odd). Hence, similar as in the proof of
         Theorem~\ref{t.noble_means_minimality}, we obtain periodicity of the sequence
         $\lb\Phi_{\tau^{(n)}}\rb_{n\geq 1}$ is periodic. (The only difference to the noble means case
         is the fact that the sequence starts at $n=1$ instead of $n=0$.).  Thus, in the light of
         Theorem~\ref{t:intro_minimality}(iv), it remains to show that the family of permutations
         $\{\Phi_{\tau^{(n)}}\mid n\in\N\}$ acts transitively on $\cA^2_{\kappa}$. However, this is
         already true for the pair
      \begin{eqnarray*}
        \Phi_{\tau^{(1)}} & = & \Phi_{10} \ : \ u_0u_1\mapsto (u_0+1)u_1 \ , \\ \Phi^{\tau^{(2)}} & =
        & \Phi_{1011} \ : \ u_0u_1\mapsto (u_0+2)(u_1+1) \ .
      \end{eqnarray*}
      This yields the minimality of $X$.
\end{proof}

It remains to treat the cases where $\eta^{-1}_{\kappa,\ell}\lb X_\mathrm{pd}\rb$ is non-minimal. We
start with odd parameters $\ell\geq 5$. To that end, we introduce two auxilliary mappings
\begin{eqnarray*}
  \pi_\mathrm{even} & : & \cA_\kappa^\Z\to\cA_\kappa^{\N_0} \quad , \quad y \mapsto \nofolge{y_{2n}} \ ,\\
  \pi_\mathrm{odd} & : & \cA_\kappa^\Z\to\cA_\kappa^{\N_0} \quad , \quad y \mapsto \nofolge{y_{2n+1}} \ .
\end{eqnarray*}
\begin{prop} \label{p.period_doubling_non-minimality}
  Suppose that $\kappa\geq 2$ and $\ell\geq 5$ is odd. Then $X=\eta^{-1}_{\kappa,\ell}\lb
  X_\mathrm{pd}\rb$ is non-minimal.
\end{prop}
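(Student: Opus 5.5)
The plan is to exploit the Toeplitz structure of $X_\mathrm{pd}$ in the present convention ($0\mapsto 11$, $1\mapsto 10$). Every $y\in X_\mathrm{pd}$ has a constant letter on one full residue class modulo $2$: $\pi_\mathrm{even}(\sigma^p y)\equiv 1$ for a unique phase $p\in\{0,1\}$. Moreover, the other parity class, $\pi_\mathrm{odd}(\sigma^p y)$, is again a period doubling sequence (as a one-sided sequence). The map $y\mapsto p(y)$ is continuous and satisfies $p(\sigma y)=p(y)+1 \bmod 2$. For $x\in X$ with $\eta_{\kappa,\ell}(x)=y$, the defining relation $x_{n+\ell-1}=x_n+y_n$ together with the fact that $\ell-1=2d$ is even ($d\geq 2$) gives the following. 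On the constant parity class the lifted sequence satisfies
\[
x_{p+2(m+d)} \ = \ x_{p+2m}+1 \bmod \kappa \quad \textrm{for all } m\in\Z .
\]
Hence the difference vector $\delta(x)=\bigl(x_{p+2m+2}-x_{p+2m}\bigr)_{m=0,\ldots,d-1}\in\Z_\kappa^d$ is $d$-periodic in $m$ and has coordinate sum $1$. It is a continuous function of $x$, since $p$ is locally constant.

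The next step is to record how $\delta$ transforms under the shift. One application of $\sigma$ changes the phase, and two applications rotate $\delta$ cyclically by one index. This yields a continuous equivariant map from $(X,\sigma)$ onto a finite permutation system $(F,\tau)$, with $F\subseteq\Z_2\times\{\delta\in\Z_\kappa^d:\sum\delta_m=1\}$ and every $\tau$-orbit of length at most $2d$. By the skew product representation (Theorem~\ref{t:skew_product_representation}), and because the prefix $x_{[0,\ell-2]}$ can be prescribed freely, every vector $\delta$ with coordinate sum $1$ is realised over a point of each phase. Thus $F$ contains $2\kappa^{d-1}$ points. If $(X,\sigma)$ were minimal, its image in $F$ would be a single $\tau$-orbit. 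Hence non-minimality follows as soon as $2\kappa^{d-1}>2d$, which holds for all pairs except $(\kappa,d)=(2,2)$, that is, $\kappa=2$, $\ell=5$. Equivalently, one exhibits two words $\xi^u(w),\xi^{u'}(w)$ that can never occur in a common element of $X$.

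The main obstacle is this boundary case $\kappa=2$, $\ell=5$, where the above count gives no contradiction. There I would pass to the next level of the Toeplitz skeleton. Since $\pi_\mathrm{odd}(\sigma^p y)$ is again a period doubling sequence, $y$ is constant on a residue class modulo $4$ as well. As $\ell-1=4$, on that class the lift satisfies $x_{n+4}=x_n+c$ with $c$ constant, so every element of the class carries the same value. Adding this value, together with the phase modulo $4$, to the invariant refines $F$ to a finite system with phases in $\Z_4$. Its cardinality exceeds the maximal orbit length, so the same argument applies. The points to check carefully are that the combined data is still a continuous function of $x$ transforming equivariantly under $\sigma$, and that all combinations are realised in $X$ (again via free choice of the prefix in the skew product). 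I expect verifying the transition rule for the refined data under a single shift to be the fiddliest part of the argument.
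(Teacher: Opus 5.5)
Your argument is correct, and it takes a genuinely different route from the paper's.

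\textbf{Comparison with the paper.} The paper fixes $y$ with right half equal to the period doubling sequence. It writes down the even subsequences of the two specific lifts $\xi^{0\cdots0}(y)$ and $\xi^{10\cdots0}(y)$. It then shows that no forward limit point of $\xi^{0\cdots0}(y)$ equals $\xi^{10\cdots0}(y)$, by a case distinction on the parity of the shift and, in the odd case, aperiodicity of $y$.

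You instead build a continuous equivariant map $x\mapsto(p(\eta_{\kappa,\ell}(x)),\delta(x))$ onto a finite permutation system and conclude by counting. Equivalently, the rotation class of $\delta(x)$ is a continuous $\sigma$-invariant function, and it is non-constant as soon as $\kappa^{d-1}>d$. This needs a two-sided fact that the paper avoids: every $y\in X_\mathrm{pd}$ has a \emph{unique} parity class on which it is constantly $1$. This is standard, from recognizability and $1111\notin\cW(X_\mathrm{pd})$.

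In return you get a robust invariant that also handles the delicate case $\kappa=2$. Put $d=(\ell-1)/2$. For $\kappa=2$ the even subsequences of the paper's two points are $(0^d1^d)^\infty$ and $(10^d1^{d-1})^\infty$, and these are shifts of one another. In your language, their $\delta$-vectors $(0,\ldots,0,1)$ and $(1,0,\ldots,0)$ lie in the same rotation class. So the paper's even-$j$ step does not by itself separate them, whereas your count produces a separated pair for $d\geq 3$.

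\textbf{Correction in the boundary case $\kappa=2$, $\ell=5$.} In the paper's convention ($0\mapsto 11$, $1\mapsto 10$) one has $y_{p+1+2m}=1-z_m$ with $z\in X_\mathrm{pd}$. So the odd subsequence is the \emph{complement} of a period doubling sequence, not a period doubling sequence. Consequently the unique residue class $r+4\Z$ on which $y$ is constant carries the letter $c=0$. This, and not merely constancy of $c$, is what makes $x$ constant on $r+4\Z$ via $x_{n+4}=x_n+y_n$.

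With $c=0$ the transition bookkeeping you were worried about disappears. Take $r\in\{0,1,2,3\}$ to be this mod-$4$ phase of $y$, and set $b(x)=x_{r(\eta_{2,5}(x))}$.
\begin{itemize}
\item[(i)] $b$ is continuous, since $r$ is locally constant.
\item[(ii)] $b\circ\sigma=b$, because $r(\sigma y)\equiv r(y)-1 \bmod 4$ and $x$ is constant on $r(y)+4\Z$.
\item[(iii)] $b$ takes both values, since position $r$ lies inside the freely choosable prefix $x_{[0,3]}$.
\end{itemize}
Hence $X$ splits into two disjoint nonempty closed invariant sets. The same argument works for any $\kappa$ whenever $4\mid \ell-1$.
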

\begin{proof} Let $y\in X_\mathrm{pd}$ be chosen such that its right side coincides with the period
  doubling sequence, that is, $y_{[0,|\tau^{(n)}|-1]}=\tau^{(n)}$ for all $n\in\N$. We aim to show
  that $\xi^{10\ldots 0}(y)$ cannot be contained in the forward orbit of $\xi^{0\ldots 0}(y)$, thus
  excluding minimality.

  To that end, note that since $y_k=1$ holds for all $n\in 2\N_0$ (a well-known and easy to prove fact
  about the period doubling sequence), we have that
  \begin{eqnarray}\label{e.pd_lift_even_positions}
  \pi_\mathrm{even}\lb\xi^{0\ldots 0}(y)\rb & = & \lb 0^{\ell-1}1^{\ell-1} \ldots
  (\kappa-1)^{\ell-1}\rb^\infty \ , \\
  \label{e.pd_lift_even_postions_2}
    \pi_\mathrm{even}\lb\xi^{10\ldots0}(y)\rb & = & \lb 10^{\ell-2}21^{\ell-2}\ldots
    0(\kappa-1)^{\ell-2}\rb^\infty \ .
  \end{eqnarray}
    Now, let \kfolge{n_k} be any sequence of positive integers such that $n_k\nearrow\infty$ and
    \[
    x \ = \ \kLim \sigma^{n_k}\lb\xi^{0\ldots 0}(y)\rb 
    \]
    exists. By going over to a subsequence if necessary, we may assume that $j=n_k\bmod
    (\ell-1)\kappa$ is constant in $k$. If $j$ is even, then we obtain
    \[
       \pi_\mathrm{even}(x) \ = \ \sigma^{j/2}\lb \pi_\mathrm{even}\lb \xi^{0\ldots 0}(y)\rb\rb \ =
       \ \sigma^{j/2}\lb 0^{\ell-1}1^{\ell-1} \ldots (\kappa-1)^{\ell-1}\rb^\infty \ .
    \]
    This, however, implies that $x\neq \xi^{10\ldots 0}(y)$.  If $j$ is odd, then we obtain that
    \[
    \pi_\mathrm{odd}(x) \ = \ \sigma^{(j+1)/2}\lb \pi_\mathrm{even}\lb\xi^{0\ldots 0}(y)\rb\rb \ =
    \ \sigma^{(j+1)/2}\lb 0^{\ell-1}1^{\ell-1} \ldots (\kappa-1)^{\ell-1}\rb^\infty \ .
    \]
    If we had $x=\xi^{10\ldots 0}(y)$, then combinded with (\ref{e.pd_lift_even_postions_2}) this
    would imply that the sequence $\xi^{10\ldots 0}(y)_{[0,+\infty)}$ is periodic, contradicting the
    aperiodicity of $y_{[0,+\infty)}$.  Altogether, this shows that $\xi^{10\ldots 0}(y)$ is not
    contained in the forward orbit of $\xi^{0\ldots 0}(y)$, so that $X$ is non-minimal.
\end{proof}

Finally, we treat the case $\ell=3$ and $\kappa\geq 2$ even. 
\begin{prop}
  If $\kappa\geq 2$ is even, then $X=\eta^{-1}_{\kappa,3}\lb X_\mathrm{pd}\rb$ is non-minimal.
\end{prop}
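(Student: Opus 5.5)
The plan is to reduce to the case $\kappa=2$ and then identify the lift with a lift of the Thue--Morse subshift, whose non-minimality can be checked directly through Corollary~\ref{c.intro_minimality_criterion}.

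\emph{Reduction to $\kappa=2$.} Let $\rho:\cA_\kappa^\Z\to\cA_2^\Z$ be reduction mod $2$ in each coordinate. Since $\kappa$ is even, reduction $\Z_\kappa\to\Z_2$ is a group homomorphism. Hence $h_{\kappa,3}(x_0x_1x_2)\bmod 2=h_{2,3}(x_0\bmod 2,\,x_1\bmod 2,\,x_2\bmod 2)$, which gives $\eta_{2,3}\circ\rho=\rho\circ\eta_{\kappa,3}$. Both $X_\mathrm{pd}$ and its lifts live on the subalphabet $\{0,1\}$, where $\rho$ acts as the identity. Therefore $\rho$ maps $X=\eta_{\kappa,3}^{-1}(X_\mathrm{pd})$ into $X'=\eta_{2,3}^{-1}(X_\mathrm{pd})$, and it commutes with $\sigma$. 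It is also onto: every $x'\in X'$ is itself a $\{0,1\}$-valued sequence, and viewed in $\cA_\kappa^\Z$ it satisfies $x'_{n+2}-x'_n\equiv y_n \pmod 2$. Using Proposition~\ref{p.intro_inverse_branches}, I take $x=\xi^{u}(y)$ for $\eta_{\kappa,3}$ with $u=x'_{[0,1]}$ and $y=\eta_{2,3}(x')$. Then $x\bmod 2$ and $x'$ agree on $[0,1]$ and both solve the same recursion mod $2$, so they coincide. Since a factor of a minimal system is minimal, it suffices to show that $X'$ is not minimal.

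\emph{Identification with a Thue--Morse lift.} Over $\Z_2$ one computes
\[
\eta_{2,2}^2(x)_n=(x_{n+2}-x_{n+1})-(x_{n+1}-x_n)=x_{n+2}+x_n=\eta_{2,3}(x)_n,
\]
so $\eta_{2,3}=\eta_{2,2}^2$. With the convention chosen for $\Gamma_\mathrm{pd}$ we have $X_\mathrm{TM}=\eta_{2,2}^{-1}(X_\mathrm{pd})$. Consequently $X'=\eta_{2,2}^{-1}(X_\mathrm{TM})$, and by Corollary~\ref{c.intro_minimality_criterion} it is enough to show the following. For every sufficiently long $w\in\cW(X_\mathrm{TM})$ and every $v$ with $wvw\in\cW(X_\mathrm{TM})$, the digit sum of $wv$ is even.

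\emph{Parity of return words in Thue--Morse.} This step carries the real content and is the main obstacle. I will use recognizability of the Thue--Morse substitution $0\mapsto01$, $1\mapsto10$ at two levels.
\begin{itemize}
\item Every point of $X_\mathrm{TM}$ decomposes uniquely into $2$-blocks from $\{01,10\}$, and also uniquely into $4$-blocks from $\{0110,1001\}$.
\item There is a constant $L$ such that any word of length at least $L$ (e.g.\ $L=8$, to be checked) determines the phase of both decompositions at each of its occurrences.
\end{itemize}
So if $|w|\ge L$, any two occurrences of $w$ in a point of $X_\mathrm{TM}$ lie at positions congruent mod $4$, which forces $|wv|\equiv 0\pmod 4$. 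Moreover, $wv$ begins at the same $2$-block phase as $w$ and has even length. Hence the occurrence of $wv$ followed by $w$ inside a point of $X_\mathrm{TM}$ lets me write $wv$ as $c\,B_1\cdots B_{m}\,c'$. Here the $B_i\in\{01,10\}$, and $c,c'$ are the boundary halves; the boundary halves are empty if the phase is $0$, and otherwise they are single letters which together form one $2$-block. In either case the digit sum of $wv$ equals $|wv|/2$, since each $2$-block contributes $1$. As $4\mid |wv|$, this sum is even, which finishes the proof.

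The only delicate point is verifying the recognizability constant. I would do this by listing the Thue--Morse words of length $4$ to $8$ and checking that $00$ or $11$ (which can sit only across a $2$-block boundary), together with the patterns $0110$ and $1001$, fix the phases mod $2$ and mod $4$.
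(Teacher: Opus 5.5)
Your argument is correct, and it takes a genuinely different route from the paper.

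\textbf{The paper's route.} The paper works with $\eta_{\kappa,3}$ throughout. For the period doubling sequence $y$ it uses $y_{2n}=1$ and $y_{4n+1}=0$. The first fact shows that on even positions both $\xi^{00}(y)$ and $\xi^{10}(y)$ count up by one every two steps. The second shows that every position $\equiv 2 \bmod 4$ has equal neighbours. Hence the parity in $\Z_\kappa$ of the symbols at such positions separates the two lifts; this parity is well defined because $\kappa$ is even. The alignment argument turning this into disjointness of orbit closures is only sketched and left to the reader.

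\textbf{Your route.} You use that $\Z_\kappa\to\Z_2$ is a homomorphism for even $\kappa$ to obtain a factor map onto $\eta_{2,3}^{-1}(X_\mathrm{pd})$. Your surjectivity argument, via $\xi^u$ and uniqueness of lifts, is exactly what is needed here. You then use $\eta_{2,3}=\eta_{2,2}^2$ and $X_\mathrm{TM}=\eta_{2,2}^{-1}(X_\mathrm{pd})$ to reduce to the Thue--Morse lift. Finally you kill all return permutations of long words by recognizability.

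This reverses the remark in the paper's introduction, where non-minimality of the Thue--Morse lift is derived from the period doubling result. Since you prove the Thue--Morse statement directly, there is no circularity. Your approach gives a complete proof and makes the role of the parity of $\kappa$ transparent. The paper's approach is more hands-on and avoids recognizability.

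\textbf{The recognizability constant.} This is easy to pin down.
\begin{itemize}
\item The squares $00$ and $11$ occur only at odd positions.
\item An occurrence of $0101$ or $1010$ at an odd position would force the factor $101010$ or $010101$, which contains an overlap.
\item Hence all occurrences of a word of length $\geq 4$ have the same parity.
\item De-substituting once then shows that all occurrences of a word of length $\geq 7$ lie in one residue class mod $4$. So $L=8$ works.
\end{itemize}

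\textbf{Wording points.}
\begin{itemize}
\item Only the lifts under $\eta_{2,3}$ are $\{0,1\}$-valued, not those under $\eta_{\kappa,3}$. This is all you actually use.
\item In the odd-phase case, the $2$-block is $c'c$, where $c=w_0$ is the first letter of the following copy of $w$. The letters $c,c'$ at the two ends of $wv$ are not themselves one block.
\item Instead of Corollary~\ref{c.intro_minimality_criterion}, you could invoke Theorem~\ref{t:intro_minimality}(i)$\Rightarrow$(ii) directly, since $\cR(w)=\{\Id\}$ once $|w|\geq L$.
\end{itemize}
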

\begin{proof}
  As in the previous proof, we choose $y$ to be the period doubling sequence and consider the points
  $\xi^{00}(y)$ and $\xi^{10}(y)$. Similar to above, we have
  \begin{eqnarray}
    \label{e.pd_finalproof_1} \pi_\mathrm{even}\lb\xi^{00}(y)\rb & = & (01\ldots(\kappa-1))^\infty \ , \\
        \label{e.pd_finalproof_2} \pi_\mathrm{even}\lb\xi^{10}(y)\rb & = & (1\ldots(\kappa-1)0)^\infty \ . 
  \end{eqnarray}
  Since $y_k=0$ holds for all $k\in 4\N_0+1$, $\xi^{00}(y)$ is of the form
  \[
  \xi^{00}(y) \ = \ 0a_11a_12a_23a_2 \ldots a_\kappa(\kappa-1)a_\kappa0a_{\kappa+1}1a_{\kappa+1}\ldots \ ,
  \]
  with $a_i\in\cA_\kappa$ (where we assume $\kappa\geq 6$ for better illustration). At the same time, 
   \[
  \xi^{00}(y) \ = \ 1a_12a_13a_24a_2 \ldots a_{\kappa-1}(\kappa-1)a_\kappa0a_{\kappa}1a_{\kappa+1}\ldots \ . 
  \]
  Hence, if an odd number appears at an even position in $\xi^{00}(y)$, then its left and right
  neighbours coincide, and the same holds for even numbers at even positions in
  $\xi^{10}(y)$. Together with (\ref{e.pd_finalproof_1}) and (\ref{e.pd_finalproof_2}), this implies
  that $\xi^{10}(y)$ cannot belong to the forward orbit closure of $\xi^{00}(y)$. (A more formal
  argument can be given along the lines of the proof of
  Proposition~\ref{p.period_doubling_non-minimality}; we leave the details to the reader.)  This
  yields that $X$ is not minimal, as claimed.
\end{proof}

  \subsection{Decomposition of non-minimal lifts into minimal components}

  When a permutative lift of a minimal subshift is not minimal, we have the following.

  \begin{lem}
    Suppose that $h\in\cF(\kappa,\ell)$ is permutative, $Y\ssq\cA^\Z_\kappa$ is a minimal subshift and
    $X=\eta^{-1}_h(Y)$ is not minimal. Then for each minimal set $M\ssq X$ there exists an integer
    $r(M)\in\{1\ld \kappa^{\ell-1}-1\}$ such that, for all $y\in Y$, we have 
    \[
      \# M \cap \eta^{-1}_h(y) \ = \ r(M)\ . 
    \]
  \end{lem}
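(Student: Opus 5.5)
We will show that the function $y\mapsto \#\lb M\cap\eta^{-1}_h(y)\rb$ is both invariant under the shift and locally constant, hence constant on $Y$ by minimality. We then locate its value in $\{1\ld\kappa^{\ell-1}-1\}$ using the fibre cardinality from Proposition~\ref{p.intro_inverse_branches} and the decomposition in Corollary~\ref{c.number_of_min_components}.

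For a minimal set $M\ssq X$, define $r_M:Y\to\{0\ld\kappa^{\ell-1}\}$ by $r_M(y)=\#\lb M\cap\eta_h^{-1}(y)\rb$. Since $\sigma(M)=M$ and $\eta_h\circ\sigma=\sigma\circ\eta_h$, the shift maps $M\cap\eta_h^{-1}(y)$ bijectively onto $M\cap\eta_h^{-1}(\sigma(y))$. Hence $r_M(\sigma(y))=r_M(y)$.

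The main step is local constancy of $r_M$. We work in the skew product picture of Theorem~\ref{t:skew_product_representation}, where $\eta_h^{-1}(y)=\{\xi^u(y)\mid u\in\cA_\kappa^{\ell-1}\}$ and the maps $\xi^u$ are continuous. By Corollary~\ref{c.number_of_min_components}, $X$ is a finite disjoint union of minimal sets, so $M$ and $M'=X\smin M$ are disjoint compact sets, and both are open in $X$. Fix $y\in Y$ and $u$. Then $\xi^u(y)$ lies in exactly one of $M$ or $M'$. Both sets are open in $X$, and $\xi^u$ maps $Y$ continuously into $X$, so $\xi^u(y')$ lies in the same set for all $y'\in Y$ close to $y$. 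Taking a neighbourhood that works simultaneously for the finitely many $u$ gives $r_M(y')=r_M(y)$ for $y'$ near $y$. So $r_M$ is continuous with finitely many values. Since it is also shift invariant, each level set is a nonempty-or-empty closed invariant subset of $Y$, and minimality of $Y$ forces $r_M$ to be constant, say $r_M\equiv r(M)$. Alternatively, one could avoid continuity: take $y$ with dense forward orbit and argue that $r_M$ is upper and lower semicontinuous. The clopen argument is cleaner, though, and it is the one we will use.

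Finally we bound $r(M)$. Since $Y$ is minimal, $\eta_h(M)$ is a nonempty closed invariant subset of $Y$, so $\eta_h(M)=Y$ and hence $r(M)\geq1$. If $r(M)=\kappa^{\ell-1}$, then $M$ contains every fibre by Proposition~\ref{p.intro_inverse_branches}, so $M=X$ and $X$ would be minimal, a contradiction. Hence $r(M)\in\{1\ld\kappa^{\ell-1}-1\}$.

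I expect the only delicate point to be the clopenness of $M$ in $X$. This rests on the finiteness of the number of minimal components in Corollary~\ref{c.number_of_min_components}, which in turn uses Theorem~\ref{t:preimage_of_ap_is_ap}.
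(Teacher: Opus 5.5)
Your proof is correct, but it reaches constancy of $r_M$ by a different route than the paper.

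The paper never uses that $M$ is open in $X$. For arbitrary $y,\hat y\in Y$ it chooses $n_k\nearrow\infty$ with $\sigma^{n_k}(y)\to\hat y$ and, after passing to a subsequence, $\sigma^{n_k}(x^{(i)})\to\hat x^{(i)}$ for the points $x^{(1)}\ld x^{(m)}$ of $M\cap\eta_h^{-1}(y)$. Closedness and invariance of $M$, together with continuity of $\eta_h$, place the limits in $M\cap\eta_h^{-1}(\hat y)$. The $(\ell-1)$-separation of fibre points from Proposition~\ref{p.intro_inverse_branches} keeps them pairwise distinct, so $r_M(\hat y)\geq r_M(y)$. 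Equality follows by exchanging the roles of $y$ and $\hat y$. This is essentially the alternative you mention in passing, except that only this single one-sided inequality is needed, since minimality of $Y$ makes it symmetric.

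The paper's route has the more economical inputs. It uses only that $M$ is closed and invariant, the separation of fibre points, and minimality of $Y$. So it does not depend on Corollary~\ref{c.number_of_min_components}, and hence not on Hedlund's Theorem~\ref{t:preimage_of_ap_is_ap}.

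Your route uses that heavier input and in exchange gives a stronger intermediate fact: the set $\{u\mid \xi^u(y)\in M\}$ of branches landing in $M$ is itself locally constant in $y$. It needs nothing about fibres beyond injectivity and continuity of $u\mapsto\xi^u(y)$. You also check the range $\{1\ld\kappa^{\ell-1}-1\}$ explicitly, which the paper leaves implicit.
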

  We will refer to $r(M)$ as the {\em rank of $M$ over $Y$}.

  \proof Let $y,\hat y\in Y$ and assume that $\#\left(\eta^{-1}_h(y)\cap M\right)=m$ and
  $\eta^{-1}_h(y)\cap M=\left\{x^{(1)}\ld x^{(m)}\right\}$. By minimality of $Y$, compactness of $M$
  and continuity of $\eta_h$, we can choose $n_k\nearrow\infty$ such that
  $\kLim \sigma^{n_k}(y)=\hat y$ and
  $\kLim \sigma^{n_k}(x^{(i)})=\hat x^{(i)} \in\eta^{-1}_h(\hat y)\cap M$. Since the $x^{(i)}$ are
  pairwise $(\ell-1)$-separated by Proposition~\ref{p.intro_inverse_branches}, so are the limits
  points $\hat x^{(i)}$. This shows
  $\#\lb\eta^{-1}_h(\hat y)\cap M\rb \geq \#\lb\eta^{-1}_h(y)\cap M\rb$.  As $y,\hat y$ were
  arbitrary, we obtain that $\#\lb\eta^{-1}_h(y)\cap M\rb$ is independent of $y\in Y$.\qed\medskip

  However, it is important to note that the ranks of two distinct minimal subsets of the lift may well
  be different. An easy example is given by the lift of the fixed point $0^\Z$ under
  $\eta^{-1}_{2,3}$, which is given by the two fixed points $0^\Z$ and $1^\Z$ and the two-periodic
  orbit $\left\{(01)^\Z,(10)^\Z\right\}$. It turns out that all a priori possible combinations of
  ranks may occur. In order to see this, the following basic observation is needed.

\begin{lem}\label{l.permutative_block_code_via_a_return_permutation}
  Let $\kappa\in \N$, and $\pi$ be a permutation of $\cA_{\kappa}$. Then there exists a permutative
  block code $h\in \cF(\kappa,2)$ such that $\Phi^{(h)}_0 = \pi$.
\end{lem}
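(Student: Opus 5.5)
For $\ell=2$ the prefixes are single letters, and the prefix permutation of the one-letter word $0$ can be read off directly from the definition. I will write this down and then choose $h$ to realise a prescribed $\pi$. For $h\in\cF(\kappa,2)$ and $u\in\cA_\kappa$, the word $\xi^u(0)$ is $u\,b$, where $b$ is the unique letter with $h(ub)=0$. Since $h(ub)=\rho_u(b)$, we get $b=\rho_u^{-1}(0)$, so
\[
\Phi^{(h)}_0(u) \ = \ \rho_u^{-1}(0) \ .
\]
The task is therefore to find a permutative $h$ with $h(u\,\pi(u))=0$ for every $u\in\cA_\kappa$.

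I would take the explicit choice
\[
h(ab) \ = \ \pi^{-1}(b)-a \bmod \kappa \ ,
\]
working in $\Z_\kappa$ as in Section~\ref{ExampleFamily}. The verification has three parts.
\begin{itemize}
\item Right permutativity: for fixed $a$, the map $b\mapsto \pi^{-1}(b)-a$ is a permutation followed by a translation, hence a bijection. So every $\rho_a$ is a permutation.
\item Left permutativity: for fixed $b$, the map $a\mapsto \pi^{-1}(b)-a$ is a reflection composed with a translation of $\Z_\kappa$, hence bijective. So every $\lambda_b$ is a permutation.
\item The prescribed value: $h(u\,\pi(u))=\pi^{-1}(\pi(u))-u=0$, so $\rho_u^{-1}(0)=\pi(u)$, which gives $\Phi^{(h)}_0=\pi$.
\end{itemize}

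The only real point of care is getting the formula $\Phi_0(u)=\rho_u^{-1}(0)$ right. This requires reading off from \eqref{e.def_prefix_permutation} that for $\ell=2$ the suffix of $\xi^u(w)$ is just its last letter. Beyond that, the proof only needs a block code on $\cA_\kappa^2$ that is a Latin square (both $\rho_a$ and $\lambda_b$ bijective) whose zero entries sit on the graph of $\pi$. The choice above achieves this by composing the standard Latin square $(a,b)\mapsto b-a$ with $\pi^{-1}$ in the second coordinate. The case $\kappa=1$ is trivial.
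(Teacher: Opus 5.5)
Your proof is correct and takes essentially the same route as the paper: an explicit permutative code, a twisted version of $(a,b)\mapsto b-a$, whose zeros lie exactly on the graph of $\pi$. The paper chooses $h(ab)=b-\pi(a)\bmod\kappa$, twisting the first coordinate by $\pi$ instead of the second by $\pi^{-1}$; this avoids the inverse but is otherwise equivalent.
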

\begin{proof} It suffices to choose $h(ab)=b-\pi(a)\bmod \kappa$.
\end{proof}

Given a permutation $\pi$ of $\cA_\kappa$, we denote by $\cP(\pi)$ the partition of $\cA_\kappa$ into
$\pi$-orbits. Note that each element $a\in\cA_\kappa$ is periodic under $\pi$ with some period
$p(a)\leq\kappa$ and its orbit is given by $\cO_\pi(a)=\{\pi^i(a)\mid i=0\ld p(a)-1\}$. Now, choose
integers $1\leq r_1\ld r_n \leq \kappa$ such $\sum_{i=1}^n r_i=\kappa$. Let $\pi$ be a permutation of
$\cA_\kappa$ such that $\cP(\pi)$ can be written as $\cP(\pi)=\{P_1\ld P_n\}$ with $\#P_i=r_i$ for all
$i=1\ld \kappa$. Note that thus $p(a)=r_i$ for all $a\in P_i$.

Now, choose $h\in \cF(\kappa,2)$ such that $\Phi_0^{(h)}=\pi$. Then for any $a\in\cA_\kappa$, we
have that
\[
  \xi^a\lb 0^\Z \rb \ = \ \left( a\pi(a)\ldots \pi^{p(a)-1}(a)\right)^\infty
\]
which is a periodic point of period $p(a)$. As a consequence, $\eta^{-1}_h\lb 0^\Z\rb$ partitions into
$n$ different minimal sets (periodic orbits) $M_1\ld M_n$ , each of cardinality $\#
M_i=r(M_i)=r_i$. This provides an easy demonstration for the fact that in general there are no
combinatorial restrictions on how a permutative lift of a minimal subshift is partitioned into minimal
sets of different rank. More sophisticated examples based on Toeplitz flows will be given in
\cite{Kang2027Thesis}. These will be based on the following generalisation of
Theorem~\ref{t:intro_minimality}.

\begin{thm}
  Suppose that $h\in\cF(\kappa,\ell)$ is permutative, $Y\ssq\cA^\Z_\kappa$ is a minimal subshift and
  $X=\eta^{-1}_h(Y)$. Further, let $\cQ$ be a partition of $\cA^{\ell-1}_\kappa$ and assume that there
  exists a sequence of words \nfolge{w^{(n)}} such that the following hold (for all $n\in\N$ where
  applicable):
  \begin{itemize}
  \item[(i)] $|\tau^{(n)}| \nearrow +\infty$ as $n\to\infty$;
  \item[(ii)] $\tau^{(n+1)}$ starts with $\tau^{(n)}$;
  \item[(iii)] $\cR\lb\tau^{(n)}\rb$ leaves each $Q\in\cQ$ invariant, that is, $\pi(Q)=Q$ for all
    $\pi\in\cR\lb\tau^{(n)}\rb$;
  \item[(iv)] $\cR\lb\tau^{(n)}\rb$ acts transitively on each $Q\in\cQ$.
  \end{itemize}
  Then $X=\biguplus_{Q\in\cQ} M_Q$, where each $M_Q$ is a minimal set given by
  \begin{equation}
    M_Q \ = \ \left\{ x\in X\mid v\triangleleft x \follows \exists n\in\N,\ u\in \cQ: v\triangleleft \xi^u\lb
      w^{(n)}\rb\right\} \ . 
\end{equation}
Moreover, we have $r(M_Q)=\#Q$ for all $Q\in\cQ$.
\end{thm}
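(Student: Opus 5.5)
The plan is to attach to every point $x\in X$ a well-defined ``label'' $Q(x)\in\cQ$. The sets $\{x\in X\mid Q(x)=Q\}$ will then be the minimal components, and we identify them with the $M_Q$ afterwards. Throughout, I read the hypotheses with $\tau^{(n)}=w^{(n)}$, and the condition $u\in\cQ$ in the definition of $M_Q$ as $u\in Q$.

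\emph{Step 1: defining the label.} Let $x=\xi^u(y)\in X$. By minimality of $Y$, the word $w^{(1)}$ occurs in $y$ with bounded gaps. For an occurrence $y_{[k,k+|w^{(1)}|-1]}=w^{(1)}$, define the local label as the element of $\cQ$ containing $x_{[k,k+\ell-2]}$.

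Take two consecutive occurrences at $k<k'$ and set $v=y_{[k+|w^{(1)}|,k'-1]}$. Then $w^{(1)}vw^{(1)}\in\cW(Y)$. Moreover, by \eqref{e.xi_u_concatenation}, $x_{[k',k'+\ell-2]}=\Phi_{w^{(1)}v}\lb x_{[k,k+\ell-2]}\rb$. Since $\Phi_{w^{(1)}v}\in\cR(w^{(1)})$, hypothesis (iii) shows that it preserves every $Q\in\cQ$. Hence the local labels of consecutive occurrences agree, and so all occurrences of $w^{(1)}$ in $y$ carry the same label. (Non-consecutive pairs of occurrences give return words as well, so chaining is not even needed.)

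This defines $Q(x)$. The label is shift-invariant. It is also locally constant, hence continuous, because it is read off from any window containing an occurrence of $w^{(1)}$, and the gaps between occurrences are bounded.

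Since $w^{(n)}$ starts with $w^{(1)}$, every occurrence of $w^{(n)}$ is an occurrence of $w^{(1)}$. Therefore the label may equivalently be computed using $w^{(n)}$ for any $n$.

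Set $M'_Q=\{x\in X\mid Q(x)=Q\}$. These sets are closed, $\sigma$-invariant and partition $X$. Fix $y$ and an occurrence position $k$ of $w^{(1)}$. By Remark~\ref{r.inverse_branches_general_notation}(b), the map $u\mapsto$ (the lift of $y$ with block $u$ at position $k$) is a bijection onto $\eta_h^{-1}(y)$. Hence $\#\lb M'_Q\cap\eta_h^{-1}(y)\rb=\#Q$.

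\emph{Step 2: minimality of $M'_Q$.} Here I rerun the argument for (iii)$\follows$(i) in Theorem~\ref{t:intro_minimality} inside $M'_Q$. Suppose $M\subsetneq M'_Q$ is minimal. Put $M'=M'_Q\smin M$, which is compact as a finite union of minimal sets by Corollary~\ref{c.number_of_min_components}.

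Choose $n$ so large that the sets $\cW_m(M)$ and $\cW_m(M')$, with $m=|w^{(n)}|+\ell-1$, are disjoint. Both $M$ and $M'$ meet every fibre, so we can pick $u,u'$ with $\xi^u(w^{(n)})\in\cW_m(M)$ and $\xi^{u'}(w^{(n)})\in\cW_m(M')$. These words occur in points of $M'_Q$ at occurrences of $w^{(n)}$, so Step 1 gives $u,u'\in Q$.

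By hypothesis (iv), $u$ and $u'$ are joined by a chain of elements of $\cR(w^{(n)})$ that stays inside $Q$. The same induction as in the proof of Theorem~\ref{t:intro_minimality} then forces $\xi^{u'}(w^{(n)})\in\cW_m(M)$, a contradiction. In particular $r(M'_Q)=\#Q$.

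\emph{Step 3: $M'_Q=M_Q$.} The words $w^{(n)}$ are nested prefixes, so they define a one-sided sequence. Any two-sided extension of it lies in $Y$, so by minimality of $Y$ every word of $\cW(Y)$ occurs in some $w^{(n)}$.

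For the inclusion $M'_Q\ssq M_Q$, let $x\in M'_Q$ and let $v$ be a subword of $x$. The underlying $y$-window of $v$ lies inside an occurrence of $w^{(n)}$ for some large $n$; this follows from the bounded gaps of $w^{(n)}$ in $y$ together with $|w^{(n)}|\to\infty$. At that occurrence the prefix of $x$ lies in $Q$ by Step 1, so $v\triangleleft\xi^{u}(w^{(n)})$ with $u\in Q$.

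For the reverse inclusion, I first show that inside $\xi^u(w^{(n)})$ with $u\in Q$, every occurrence of $w^{(1)}$ within $w^{(n)}$ has its prefix in $Q$. The occurrence at position $0$ has prefix $u\in Q$, and later occurrences are reached by return permutations of $w^{(1)}$, which preserve $Q$ by (iii). Now let $x\in M_Q$. Any window of $x$ containing an occurrence of $w^{(1)}$ is of the form $\xi^u(w^{(n)})$-subword with $u\in Q$, so the local label there is $Q$, and hence $Q(x)=Q$.

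I expect Step 1 to be the main obstacle: checking carefully that the labels are consistent across different occurrences and different $n$, and that they are read off correctly from finite windows. Everything after that is a routine adaptation of the proof of Theorem~\ref{t:intro_minimality}.
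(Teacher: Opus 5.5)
Steps~1 and~2 are sound, and they are organised differently from the paper. The paper fixes one point $y$ beginning with every $w^{(n)}$. It uses (iv) to put all $\xi^u(y)$, $u\in Q$, into one minimal component, and (iii) to keep $\xi^{u'}(y)$, $u'\notin Q$, out of it. You instead build a locally constant invariant label from (iii), which gives the clopen partition and $r=\#Q$ on every fibre at once, and then use (iv) only for minimality. However, the first inclusion $M'_Q\ssq M_Q$ in Step~3 rests on a false claim.

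\textbf{The false claim in Step~3.} You assert that the $y$-window underlying a subword $v$ of $x$ lies inside a single occurrence of some $w^{(n)}$ in $y$. Bounded gaps together with $|w^{(n)}|\to\infty$ do not give this, and it fails in general. Take $Y=X_{\mathrm{pd}}$, $w^{(n)}=\tau^{(n)}$, $h=h_{2,2}$ and $\cQ=\{\cA_2\}$. The hypotheses hold: (iii) is trivial, and (iv) follows from Theorem~\ref{t:intro_minimality} because the Thue--Morse lift is minimal. Let $y$ be a two-sided fixed point of $\Gamma^2_{\mathrm{pd}}$ with $y_{[0,\infty)}$ the period doubling sequence. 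Then $y$ is a concatenation of the blocks $\tau^{(n)},\vartheta^{(n)}$ placed at $2^n\Z$, and the block starting in $2^{n+1}\Z$ is always $\tau^{(n)}$. An induction on $n$ then shows that for $n\geq 2$ the word $\tau^{(n)}$ occurs in $y$ only at positions in $2^n\Z$. Since also $y_{-1}y_0\neq 10=\tau^{(1)}$, no occurrence of any $\tau^{(n)}$ contains both positions $-1$ and $0$.

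\textbf{Repair for Step~3.} Use Step~2 instead. Choose $y^*\in Y$ with $y^*_{[0,|w^{(n)}|-1]}=w^{(n)}$ for all $n$. Such a point exists by compactness; your remark that every two-sided extension of the limit word lies in $Y$ is wrong, only some do. For $u\in Q$, the point $\xi^u(y^*)$ has label $Q$, so it lies in the minimal set $M'_Q$ and is almost periodic. Hence every subword of every $x\in M'_Q$ occurs in $\xi^u(y^*)_{[0,\infty)}$. It therefore occurs in $\xi^u(y^*)_{[0,|w^{(n)}|+\ell-2]}=\xi^u(w^{(n)})$ for some $n$.

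\textbf{A smaller gap in Step~1.} Two occurrences of $w^{(1)}$ at $k<k'$ may overlap, i.e.\ $k'<k+|w^{(1)}|$. Then $y_{[k,k'-1]}$ is not of the form $w^{(1)}v$, so $\Phi_{y_{[k,k'-1]}}$ is not a priori in $\cR(w^{(1)})$. Your parenthetical remark about non-consecutive pairs does not cover this case. The fix is to pass through an occurrence $k''\geq k'+|w^{(1)}|$, which exists by recurrence, and write $\Phi_{y_{[k,k'-1]}}=\Phi_{y_{[k',k''-1]}}^{-1}\circ\Phi_{y_{[k,k''-1]}}$. Both factors lie in $\cR(w^{(1)})$, so this map preserves every $Q\in\cQ$.

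\textbf{The reverse inclusion.} The same overlap issue appears where you propagate the label along occurrences of $w^{(1)}$ inside $\xi^u(w^{(n)})$. It is fixed either by working inside a longer $w^{(n')}$, or by noting that $\xi^u(w^{(n)})$ is an initial segment of $\xi^u(y^*)$, to which the repaired Step~1 applies. With these repairs your argument is complete.
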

\begin{proof}
  Since $w^{(n+1)}$ starts with $w^{(n)}$, there exists a sequence $y\in Y$ such that
  $y_{[0,|w^{(n)}|-1]}=w^{(n)}$ for all $n\in\N$. Fix $\tilde u\in\cA^{\ell-1}_\kappa$ and let
  $M\ssq X$ be the minimal component that contains $\xi^{\tilde u}(y)$, that is,
  $M=\overline{\cO_\sigma(\xi^{\tilde u}(y))}$. Choose $n\in\N$ such that
  \begin{equation}\label{e.decomposition_minimal_sets}
    \cW_{|\omega^{(n)}|+\ell-1}(M)\cap \cW_{|\omega^{(n)}|+\ell-1}(M') \ = \ \emptyset \ ,
  \end{equation}
  where $M'=X\smin M$. Now, suppose that $u,\tilde u$ belong the the same partition element $Q\in\cQ$
  and there is $v\in\cA^*_\kappa$ such that $w^{(n)}vw^{(n)}\in\cW(Y)$ and
  $\Phi_{w^{(n)}v}(\tilde u)=u$. Then
  \[
    \xi^{\tilde u}\lb w^{(n)}vw^{(n)}\rb_{[|w^{(n)}v|,|w^{(n)}vw^{(n)}|-1]} \ = \ \xi^{u}\lb w^{(n)}\rb \ .
  \]
  Due to (\ref{e.decomposition_minimal_sets}), this entails that $\xi^{u}(y)\in M$. Similar as in the
  proof of Theorem~\ref{t:intro_minimality} in Section~\ref{Minimality_via_return_permutations}, the
  fact that $\cR\lb w^{(n)}\rb$ acts transitively on $Q$ then yields that $\xi^{u}(y)\in M$ for all
  $u\in Q$. Using minimality, we can conclude that $M=M_Q$.

  It remains to prove that the sets $M_Q$, $Q\in\cQ$, are pairwise disjoint. To that end, it suffices
  to show that $\xi^{u'}(y)\notin M_Q$ whenever $u'\notin Q$. Assume for a contradiction that
  $\xi^{u'}(y)\in M_Q$, so that in particular $\xi^{u'}\lb w^{(n)}\rb \in \cW\lb M_Q\rb$. By
  minimality, this yields $\xi^{u'}\lb w^{(n)}\rb \triangleleft \xi^{u}(y)$ for $u\in Q$. However,
  this is only possible if there exists some $v\in\cA^*_\kappa$ such that $y$ starts with
  $w^{(n)}vw^{(n)}$ and $\Phi_{w^{(n)}v}(u)=u'$. As $\Phi_{w^{(n)}v}\in\cR\lb w^{(n)}\rb$ in this
  case, this contradicts (iii).
\end{proof}

%%%%%%%%%%%%%%%%%%%%%%%%%%%%%%%%%%%%%%%%%%%%%%%%%%%%%%%%%%%%%%%%%%%%%%%%%%%%%%%%%%%%%%%%
%%%%%%%%%%%%%%%%%%%%%%%%%%%%%%%%%%%%%%%%%%%%%%%%%%%%%%%%%%%%%%%%%%%%%%%%%%%%%%%%%%%%%%%%
%%%%%%%%%%%%%%%%%%%%%%%%%%%%%%%%%%%%%%%%%%%%%%%%%%%%%%%%%%%%%%%%%%%%%%%%%%%%%%%%%%%%%%%%

\section{Unique ergodicity} \label{UniqueErgodicity}

%%%%%%%%%%%%%%%%%%%%%%%%%%%%%%%%%%%%%%%%%%%%%%%%%%%%%%%%%%%%%%%%%%%%%%%%%%%%%%%%%%%%%%%%

\subsection{Unique ergodicity of substitution subshifts} \label{UniqueErgodicityForSubstitutions}

Before we turn to discuss the unique ergodicity for lifts of arbitrary strictly ergodic
subshifts, we first want to point out in this section that for all the substitution examples discussed
in Section~\ref{SubstitutionApplications}, unique ergodicity is a more or less direct consequence of
minimality. The reason is the fact that the linear recurrence of primitive substitution subshifts is
preserved by lifts under permutative sliding block codes, in the sense that every minimal component of
the lift is linearly recurrent (and therefore uniquely ergodic).  This is true in general for all
primitive substitutions and will be discussed in
\cite{GohlkeJaegerKang2026PermutativeSubstitutionLifts}. However, in order to demonstrate the
underlying principle, we want to provide a simplified argument for the case of the noble means
substitutions. Recall that all primitive substitution subshifts are linearly recurrent
\cite{DamanikLenz2006SubstitutionSystemsAndLinearRepetitivity} and all linearly recurrent subshifts
are uniquely ergodic (see, for instance, \cite{Durand2000LinearRecurrence,Bruin2022SymbolicDynamics}).
\smallskip

Now, for given parameters $p\in\N$ and $\kappa,\ell\geq 2$, let
$X=\eta^{-1}_{\kappa,\ell}\lb X_{\mathrm{nm},p}\rb$. Due to the minimality of $X$ by
Theorem~\ref{t.noble_means_minimality}, there exists some $k\in\N$ such that for all
$u,u'\in\cA_{\kappa}^{\ell-1}$, the lift $\xi^u\lb \tau^{(k)}\rb$ of $\tau^{(k)}$ contains
$\xi^{u'}(0)$. Moreover, as shown in Section~\ref{NobleMeans}, the sequence of prefix permutations
$\nfolge{\Phi_{\tau^{(n)}}}$ is periodic with some period $q\in\N$. In particular, we have
$\Phi_{\tau^{(n-1)}} = \Phi_1$ and $\Phi_{\tau^{(n)}} = \Phi_0$ for all $n\in q\Z$. This immediately
implies that
\begin{equation}
  \label{e.ue_fibonacci_return_permutation_invariance}
  \Phi_{\Gamma_{\mathrm{nm},p}^n(w)} \ = \ \Phi_w
\end{equation}
holds for all $w\in\cA_\kappa^*$. Further, we therefore obtain that
\begin{equation}
  \label{e.ue_fibonacci_word_inclusion}
  \xi^{u'}\lb\tau^{(n)}\rb \triangleleft \xi^u\lb\tau^{(n+k)}\rb \textit{ holds for all }
  u,u'\in\cA^{\ell-1}_\kappa \textit{ and } n\in q\Z \ .
\end{equation}

Note also that for any $n\in\N$, any $y\in X_{\mathrm{nm},p}$ can be written as an infinite
concatenation of substitutions words $\tau^{(n)}$ and $\tau^{(n-1)}$
\cite{BaakeGrimm2013AperiodicOrder}. This implies that any $w\in \cW\lb X_{\mathrm{nm},p}\rb$ with
$|w|\leq |\tau^{(n-2)}|$ is contained in $\tau^{(n)}$ as a subword.  \smallskip

In order to see that $X$ is linearly recurrent, let $\hat w=\xi^{\hat u}(w)\in \cW(X)$, with
$\hat u\in\cA^{\ell-1}_\kappa$ and $w\in \cW\lb X_{\mathrm{nm},p}\rb$.  Let $n\in q\Z$ be minimal with
the property that $|\tau^{(n-2)}| \geq |w|$, so that $w\triangleleft \tau^{(n)}$. Since we choose $n$
minimal, we have
\begin{equation}
  |\tau^{(n)}| \ \leq \ 2^{q+2}\cdot |w| \ . 
\end{equation}
Now, suppose that $v\in \cW\lb X_{\mathrm{nm},p}\rb$ satisfies $|v|\geq 2^{q+k+4}\cdot |w|$. Then
$|v|\geq 4|\tau^{(n+k)}|$, which implies $\tau^{(n+k)}\triangleleft v$. By
(\ref{e.ue_fibonacci_word_inclusion}), we obtain that
$ \xi^{u'}\lb \tau^{(n)}\rb \triangleleft \xi^u(v)$ holds for all
$u,u'\in\cA^{\ell-1}_\kappa$. Consequently, we also have $\xi^{u'}(w) \triangleleft \xi^u(v)$ for all
$u,u'\in\cA^{\ell-1}$, and in particular $\xi^{\hat u}(w) \triangleleft \xi^u(v)$. Since
$u\in\cA^{\ell-1}_\kappa$ and $v\in\cW\lb X_{\mathrm{nm},p}\rb$ with $|v|\geq 2^{q+k+4}$ were
arbitrary, we obtain that $\xi^{\hat u}(w)$ is contained in any word of length
$\geq 2^{q+k+4}\cdot |w|+\ell-1$ in $\cW(X)$. As $|\xi^{\hat u}(w)|=|w|+\ell-1$, the same holds in
particular for any word of length $\geq (\ell-1)\cdot 2^{q+k+4}\cdot |\xi^{\hat u}(w)|$. Therefore $X$
is linearly recurrent with constant $L=(\ell-1)\cdot 2^{q+k+4}$, and hence uniquely ergodic.\medskip

Together with Theorem~\ref{t.noble_means_minimality}, this proves Theorem~\ref{t:intro_fibonacci}. For
the Tribonacci and period doubling substitutions, we refer to the mentioned result in
\cite{GohlkeJaegerKang2026PermutativeSubstitutionLifts}.

%%%%%%%%%%%%%%%%%%%%%%%%%%%%%%%%%%%%%%%%%%%%%%%%%%%%%%%%%%%%%%%%%%%%%%%%%%%%%%%%%%%%%%%%

\subsection{Unique ergodicity via uniform frequencies of return permutations} \label{UniqueErgodicityViaReturnPermutations}

We now turn to the characterisation of unique ergodicity for general permutative lifts, without
relying on linear recurrence or any substitution structure. Thereby, the skew product representation
provided by Theorem~\ref{t:skew_product_representation} allows to see that there always exists a
canonical invariant Borel measure for the lift of a uniquely ergodic subshift by a permutative sliding
block code. The main task will then be to show the uniqueness of this particular invariant measure.

\begin{prop}\label{p.invariant_product_measure}
  Suppose that $Y\ssq\cA_\kappa^\Z$ is a subshift, $\mu_Y$ is a shift-invariant Borel measure on $Y$,
  $h\in\cF(\kappa,\ell)$ is a permutative block code and $X=\eta_h^{-1}(Y)$. Then there exists a
  shift-invariant Borel measure $\mu_X$ on $X$ which satisfies
  \begin{equation}
    \label{e.product_measure_1}
    \mu_X\left(\left[\xi^u(w)\right]^+_{|w|+\ell-2}\right) \ = \ \frac{1}{\kappa^{\ell-1}} \cdot
    \mu_Y\lb [w]^+_{|w|-1}\rb
  \end{equation}
  for all $w\in\cW(Y)$ and $u\in\cA^{\ell-1}_\kappa$ and is uniquely determined by these equations.
\end{prop}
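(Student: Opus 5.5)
Via the skew product representation of Theorem~\ref{t:skew_product_representation}, we work on $Y\times\cA_\kappa^{\ell-1}$ with the map $T(y,u)=(\sigma(y),\Phi_{y_0}(u))$. We define $\nu=\mu_Y\otimes c$, where $c$ is the normalised counting measure on $\cA_\kappa^{\ell-1}$, and set $\mu_X=\nu\circ\gamma$, the push-forward of $\nu$ under the homeomorphism $\gamma^{-1}$.

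\emph{Invariance.} It suffices to check $\nu(T^{-1}(C\times\{u\}))=\nu(C\times\{u\})$ for Borel $C\ssq Y$ and $u\in\cA_\kappa^{\ell-1}$, since such product sets form a $\pi$-system generating the Borel $\sigma$-algebra. Partitioning $C$ according to the letter at position $-1$, i.e.\ $C_a=C\cap\sigma([a]_{\{0\}})$, gives
\[
T^{-1}(C\times\{u\}) \ = \ \bigcup_{a\in\cA_\kappa} \sigma^{-1}(C_a)\times\{\Phi_a^{-1}(u)\} \ ,
\]
since $T(y,u')=(\sigma y,\Phi_{y_0}(u'))$ and $y_0=(\sigma y)_{-1}$. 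Its $\nu$-measure is
\[
\sum_a \mu_Y(\sigma^{-1}C_a)\,\kappa^{-(\ell-1)} \ = \ \mu_Y(C)\,\kappa^{-(\ell-1)} \ = \ \nu(C\times\{u\}) \ ,
\]
using the $\sigma$-invariance of $\mu_Y$ and the fact that each $\Phi_a$ is a permutation (Proposition~\ref{p.intro_prefix_permutations}). As $\gamma$ is a conjugacy, $\mu_X$ is then $\sigma$-invariant.

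\emph{Formula (\ref{e.product_measure_1}).} For $x\in X$, by the uniqueness of finite lifts, $x_{[0,|w|+\ell-2]}=\xi^u(w)$ holds if and only if $x_{[0,\ell-2]}=u$ and $\eta_h(x)_{[0,|w|-1]}=w$, the latter being determined by $x_{[0,|w|+\ell-2]}$. Hence $\gamma\bigl([\xi^u(w)]^+\cap X\bigr)=([w]^+\cap Y)\times\{u\}$, which has $\nu$-measure $\kappa^{-(\ell-1)}\mu_Y([w]^+)$. This is exactly (\ref{e.product_measure_1}).

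\emph{Uniqueness.} The cylinders $[\xi^u(w)]^+$ with $w\in\cW(Y)$, $u\in\cA^{\ell-1}_\kappa$ exhaust all positive cylinders $[v]^+$ with $v\in\cW(X)$ and $|v|\ge\ell$ (cylinders of words not in $\cW(X)$ are null, and shorter cylinders are finite disjoint unions of longer ones). For a shift-invariant measure, the values on positive cylinders determine the values on all cylinders $[v]_{\{-n,\dots,m\}}=\sigma^{n}([v]^+)$. Cylinders form a $\pi$-system generating the Borel sets, so by the $\pi$-$\lambda$ theorem any two invariant measures satisfying (\ref{e.product_measure_1}) coincide.

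The only point requiring care is the invariance computation, namely correctly tracking that the fibre map depends on $y_0$ while the set is described in shifted coordinates. Everything else is routine.
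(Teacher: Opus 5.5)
Your proof is correct and follows the same route as the paper: you pull back the product measure $\mu_Y\otimes c$ on the skew product $(Y\times\cA_\kappa^{\ell-1},T)$ via the conjugacy $\gamma$, and you read off the cylinder values from $\gamma\bigl([\xi^u(w)]^+\cap X\bigr)=([w]^+\cap Y)\times\{u\}$. You also supply two details the paper glosses over: the explicit $T$-invariance computation for the product measure, and the point that positive cylinders determine the measure only once shift-invariance is used, which is the correct reading of the uniqueness claim.
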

\proof\quad First of all, we note that
\[
\cW(X) \ = \ \left\{\xi^u(w) \mid w\in \cW(Y),\ u\in\cA_\kappa^{\ell-1}\right\} \cup
\bigcup_{n=1}^{\ell-1} \cA^n_\kappa \ .
\]
Therefore, any Borel measure on $X$ is determined by its values on the cylider sets of the form
$\left[\xi^u(w)\right]^+_{|w|+\ell-1}$. In particular, $\mu_X$ is uniquely determined by
(\ref{e.product_measure_1}).

In order to see that $\mu_X$ is indeed a shift-invariant Borel measure, note that the skew product
transformation $T:Y\times\cA^{\ell-1}_\kappa \to Y\times\cA^{\ell-1}_\kappa$ defined in
(\ref{e.skew_product_system}) has the invariant measure $\tilde\mu_Y=\mu_Y\times \theta$, where
$\theta$ denotes the equidistribution on $\cA^{\ell-1}_\kappa$. By definition, we have that
\begin{equation}
  \label{e.product_measure_2}
  \tilde\mu_Y\left([w]^+_{|w|}\times\{u\}\right) \ = \ \frac{1}{\kappa^{\ell-1}}\cdot
  \mu_Y\left([w]^+_{|w|}\right) \
\end{equation}
for any $w\in\cW(Y)$ and $u\in\cA^{\ell-1}_\kappa$.  Moreover, if $\gamma$ is the conjugacy between
$(X,\sigma)$ and $(Y\times\cA_\kappa^{\ell-1},T)$ given by (\ref{e.conjugacy_to_skew_product}), then
\begin{equation}
  \label{e.product_measure_3}
  \gamma\left([w]^+_{|w|}\times\{u\} \right) \ = \ \left[\xi^u(w)\right]^+_{|w|+\ell-1} \ .
\end{equation}
Therefore, $\mu_X$ equals the measure $(\gamma^{-1})_*\tilde\mu_Y = \tilde\mu_Y\circ \gamma$ on
$X$. As $\gamma$ is a conjugacy and $\tilde\mu_Y$ is $T$-invariant, $\mu_X$ is a shift-invariant Borel
measure as claimed.  \qed\medskip

We say $x\in\cA_\kappa^\Z$ has {\em uniform word frequencies}\foot{Oxtoby \cite{Oxtoby1952ErgodicSets}
  calls this property {\em strict transitivity}, whereas in the context of aperiodic order and the
  study of Delone sets it is usually referred to as {\em uniform patch frequencies}
  \cite{Schlottmann1999GeneralizedModelSets}.}  if, for all $w\in\cA^*_\kappa$, there exist numbers
$\bar\nu(x,w)$ such that
\begin{equation}\label{e.uniform_word_frequencies}
  \lim_{n\to\infty} \sup_{k\in\Z} \left|\ntel \jnergsum \ind_{[w]^+_{|w|}}\circ \sigma^{k+j}(x)
  -\bar\nu(x,w)\right|\ = \ 0 \ .
\end{equation}
The following statement is well-known (see, for instance,
\cite{Oxtoby1952ErgodicSets,Schlottmann1999GeneralizedModelSets}).

\begin{prop}
  \label{p.unique_ergodicity_via_word_frequecies} If $Y\ssq \cA_\kappa^\Z$ is a minimal subshift, then
  the following are equivalent.  \romanlist
\item $(Y,\sigma)$ is uniquely ergodic;
   \item all $y\in Y$ have uniform word frequencies;
\item there exists some $y\in Y$ with uniform word frequencies.
 \listend
\end{prop}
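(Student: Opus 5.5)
I would prove the cycle (i)$\follows$(ii)$\follows$(iii)$\follows$(i). The implication (ii)$\follows$(iii) is trivial. Throughout, I use that word frequencies are Birkhoff averages of indicator functions of cylinder sets, and that linear combinations of such indicators are dense in $C(Y)$.

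For (i)$\follows$(ii), I let $\mu$ be the unique invariant measure and apply the uniform convergence in (\ref{e.unique_ergodicity}) with $f=\ind_{[w]^+_{|w|}}$. This $f$ is continuous because cylinder sets are clopen. The uniformity in $x\in Y$ then gives uniformity over all shifts $\sigma^k(y)$, since these again lie in $Y$. Hence (\ref{e.uniform_word_frequencies}) holds with $\bar\nu(y,w)=\mu([w]^+_{|w|})$ for every $y\in Y$.

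For (iii)$\follows$(i), I fix $y$ with uniform word frequencies and let $\mu$ be an arbitrary invariant probability measure on $Y$. The goal is to show that $\mu([w]^+_{|w|})=\bar\nu(y,w)$ for all $w$; then $\mu$ is determined on all cylinders, hence unique. Let $f=\ind_{[w]^+_{|w|}}$ and $A_n f=\ntel\jnergsum f\circ\sigma^j$. By invariance, $\int A_n f\,d\mu=\mu([w]^+)$ for every $n$. It therefore suffices to show that $A_nf(z)\to\bar\nu(y,w)$ uniformly in $z\in Y$. This is the main step, and it is where minimality enters.

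To carry it out, I fix $n$ and $z\in Y$. The value $A_nf(z)$ depends only on the word $z_{[0,n+|w|-2]}$, which lies in $\cW(Y)$. By minimality, $\cW(Y)=\cW(y)$, so there is $k\in\Z$ with $\sigma^k(y)_{[0,n+|w|-2]}=z_{[0,n+|w|-2]}$. Consequently $A_nf(z)=A_nf(\sigma^k y)$. The uniform convergence in (\ref{e.uniform_word_frequencies}), taken over all $k$, then transfers directly: $\sup_{z\in Y}|A_nf(z)-\bar\nu(y,w)|\to0$. Dominated convergence (or simply the uniform bound) gives $\mu([w]^+)=\bar\nu(y,w)$.

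I expect the hardest point to be exactly this transfer from the single orbit of $y$ to all of $Y$. It is handled by the observation that a finite window of any $z$ is realised somewhere in $y$. In fact only $\cW(Y)\ssq\cW(y)$ is needed, i.e.\ that the orbit of $y$ is dense, and minimality guarantees this.
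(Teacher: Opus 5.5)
Your proof is correct. The paper itself gives no proof of this proposition: it quotes it as well known, citing Oxtoby and Schlottmann. What you wrote is the standard argument behind those references.

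For (i)$\Rightarrow$(ii), you use uniform convergence of Birkhoff averages of the continuous functions $\ind_{[w]^+_{|w|}}$. For (iii)$\Rightarrow$(i), the key step is the observation that $A_nf(z)$ depends only on the window $z_{[0,n+|w|-2]}$. By minimality this window is realised in some $\sigma^k(y)$, so uniformity over $k$ becomes uniformity over $z\in Y$.

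Two small points should be stated explicitly.
\begin{itemize}
\item The values $\mu([w]^+_{|w|})$ determine $\mu$ only together with shift-invariance. Every clopen subset of $Y$ is a finite disjoint union of cylinders $[v]_N$, and each of these is a shifted copy of a cylinder of the form $[w]^+_{2N+1}$. So invariance fixes $\mu$ on the algebra of clopen sets, which generates the Borel $\sigma$-algebra.
\item Your argument only shows there is at most one invariant measure. To get exactly one, you need the existence of at least one invariant measure, which is Krylov--Bogolyubov.
\end{itemize}

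Your transfer argument also proves the variant in Remark~\ref{r.unique_ergodicity_via_word_frequencies}, where the supremum runs only over $k\in\N$. In a minimal subshift forward orbits are dense, so every word of $\cW(Y)$ already occurs in $y_{[0,\infty)}$.
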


\begin{remark}\label{r.unique_ergodicity_via_word_frequencies}
  If the supremum in (\ref{e.uniform_word_frequencies}) is only taken over $k\in\N$ (instead of
  $k\in\Z$), then this still yields unique ergodicity on the $\omega$-limit set of $y$. However, if
  $y$ is almost periodic, then the $\omega$-limit set equals the orbit closure. Hence, the statement
  of Proposition~\ref{p.unique_ergodicity_via_word_frequecies} remains valid if the supremum in
  (\ref{e.uniform_word_frequencies}) is only taken over $k\in\N$.
\end{remark}

In order to adapt this statement to our situation, we need some more notation. Given a permutative
block code $h\in\cF(\kappa,\ell)$, a strictly ergodic subshift $Y\ssq \cA_\kappa^\Z$, $w,v\in\cW(Y)$
with $|w|\leq |v|$ and $u,u'\in\cA_\kappa^{\ell-1}$, we define
\begin{eqnarray}\label{e.subword_appearences}
 \Ap(w,v) & = & \left\{ 0\leq j \leq |v|-|w|\mid v_{[j,j+|w|-1]}=w\right\} \eqand\\
  \label{e.subword_permutation_appearences}
   \Ap_{u\mapsto u'}(w,v) & = & \left\{ j\in \Ap(w,v)\mid \Phi_{v_{[0,j-1]}}(u)=u'\right\} \ .
\end{eqnarray}
Note that we keep the dependence of these quantities on $h$ implicit. Let further
\begin{eqnarray}
  \nu^\ap(w,v) & = & \frac{\#\Ap(w,v)}{|v|-|w|+1} \ , \\
  \nu^\ap_{u\to u'}(w,v) & = &  \frac{\#\Ap_{u\mapsto u'}(w,v)}{|v|-|w|+1} \ , 
\end{eqnarray}
as well as
\begin{eqnarray}
\label{e.subword_frequency_balance_from_prefix}
  D^\ap_u(w,v)  & = &  \max_{u',u''\in\cA^{\ell-1}} \left| \nu^\ap_{u\mapsto u'}(w,v)-\nu^\ap_{u\mapsto u''}(w,v)\right| \ ,  \\
  \label{e.subword_frequency_balance}
  D^\ap(w,v) & = & \max_{u\in\cA^{\ell-1}} D^\ap_u(w,v) \ . 
\end{eqnarray}
Then we obtain the following.
\begin{thm}
  \label{t:unique_ergodicity} Suppose that $Y\ssq\cA_\kappa^{\Z}$ is a strictly ergodic subshift with
  unique invariant probability measure $\mu_Y$ and $h\in\cF(\kappa,\ell)$ is a permutative block code
  of length~$\ell$.  Then $X=\eta_h^{-1}(Y)$ is strictly ergodic if and only if there exists $y\in Y$
  such that
  \begin{equation}\label{e.unique_ergodicity_criterion_1}
    \nLim \sup_{k\in\N} D^\ap\lb w,y_{[k,k+n]}\rb \ = \ 0
  \end{equation}
  holds for all $w\in\cW(Y)$. Moreover, (\ref{e.unique_ergodicity_criterion_1}) is equivalent to
   \begin{equation}\label{e.unique_ergodicity_criterion_2}
    \nLim \sup_{k\in\N}\max_{u,u'\in\cA_\kappa^{\ell-1}} \left(\nu^\ap_{u\mapsto u'}\lb w,y_{[k,k+n]}\rb
    - \frac{\mu_Y\lb [w]^+_{|w|}\rb}{\kappa^{\ell-1}}\right) \ = \ 0 \ . 
   \end{equation}
\end{thm}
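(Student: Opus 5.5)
The plan is to translate word frequencies in the lift into the quantities $\nu^\ap_{u\mapsto u'}$ by means of the inverse branches. The basic observation comes from Proposition~\ref{p.intro_inverse_branches} and the concatenation rule (\ref{e.xi_u_concatenation}). Fix $y\in Y$, $u\in\cA^{\ell-1}_\kappa$ and $x=\xi^u(y)$. For $j\geq 0$, the word $x_{[j,j+|w|+\ell-2]}$ is always of the form $\xi^{u'}(w')$ with $w'=y_{[j,j+|w|-1]}$ and $u'=\Phi_{y_{[0,j-1]}}(u)$. Hence $x$ has an occurrence of $\xi^{u'}(w)$ at position $k+j$ if and only if $k+j$ is an occurrence of $w$ in $y$ and $\Phi_{y_{[k,k+j-1]}}(u'')=u'$, where $u''=\Phi_{y_{[0,k-1]}}(u)$. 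So, up to the boundary error coming from the length $\ell-1$ overhang, which vanishes as $n\to\infty$, the frequency of $\xi^{u'}(w)$ in the window $x_{[k,k+n]}$ equals $\nu^\ap_{u''\mapsto u'}(w,y_{[k,k+n]})$. Since every word of $X$ of length $\geq\ell-1$ has the form $\xi^{u'}(w)$, all word frequencies of forward windows of $x$ are expressed in this way.

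\emph{Equivalence of (\ref{e.unique_ergodicity_criterion_1}) and (\ref{e.unique_ergodicity_criterion_2}).} For every $u$ we have $\sum_{u'}\nu^\ap_{u\mapsto u'}(w,v)=\nu^\ap(w,v)$. Since $Y$ is uniquely ergodic, Proposition~\ref{p.unique_ergodicity_via_word_frequecies} shows that $\nu^\ap(w,y_{[k,k+n]})\to\mu_Y([w]^+_{|w|})$ uniformly in $k$. If the pairwise differences $D^\ap$ tend to zero uniformly, then each $\nu^\ap_{u\mapsto u'}$ tends uniformly to the average $\mu_Y([w]^+_{|w|})/\kappa^{\ell-1}$, which gives (\ref{e.unique_ergodicity_criterion_2}) even with absolute values. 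Conversely, suppose (\ref{e.unique_ergodicity_criterion_2}) holds. It bounds each $\nu^\ap_{u\mapsto u'}$ asymptotically from above by the average. Because the $\kappa^{\ell-1}$ terms sum to a quantity converging to $\kappa^{\ell-1}$ times that average, each term is also bounded from below. This yields two-sided uniform convergence and hence $D^\ap\to 0$.

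\emph{Sufficiency.} Assume (\ref{e.unique_ergodicity_criterion_2}), in its two-sided form, holds for some $y$. By the observation in the first paragraph, every $x=\xi^u(y)$ has uniform word frequencies over windows starting at $k\in\N$, and the limit for $\xi^{u'}(w)$ is $\mu_Y([w]^+_{|w|})/\kappa^{\ell-1}>0$. Positivity is used for minimality. By Theorem~\ref{t:preimage_of_ap_is_ap}, $x$ is almost periodic, so its orbit closure $M$ is a minimal set. Every word $\xi^{u'}(w)$ with $w\in\cW(Y)$ occurs in $x$, so $\cW(M)=\cW(X)$. Moreover, $X$ is a finite disjoint union of minimal sets by Corollary~\ref{c.number_of_min_components}, and distinct minimal sets have disjoint long-word languages, as used in the proof of Theorem~\ref{t:intro_minimality}. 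Together these force $M=X$. Unique ergodicity then follows from Remark~\ref{r.unique_ergodicity_via_word_frequencies}, since $x$ is almost periodic with uniform forward word frequencies.

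\emph{Necessity.} If $X$ is strictly ergodic, its unique invariant measure must be the measure $\mu_X$ from Proposition~\ref{p.invariant_product_measure}. By Proposition~\ref{p.unique_ergodicity_via_word_frequecies}, every $x\in X$ has uniform word frequencies equal to $\mu_X([\xi^{u'}(w)]^+)=\mu_Y([w]^+_{|w|})/\kappa^{\ell-1}$. Applying this to $x=\xi^{u}(y)$ for all $u$ and reading the first paragraph backwards gives (\ref{e.unique_ergodicity_criterion_2}) with absolute values, hence (\ref{e.unique_ergodicity_criterion_1}), for every $y\in Y$. For this step one has to check that, for a fixed window start $k$, the initial prefix $u''$ ranges over all of $\cA^{\ell-1}_\kappa$ as $u$ does. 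This holds because $\Phi_{y_{[0,k-1]}}$ is a bijection.

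The main obstacle I expect is the bookkeeping in this dictionary between the two sides. The windows of $x$ have length $n+\ell-1$ while those of $y$ have length $n+1$. The suprema over $k$ are taken after relabelling the prefix $u''=\Phi_{y_{[0,k-1]}}(u)$, which depends on $k$. This is harmless because $D^\ap$ and the expression in (\ref{e.unique_ergodicity_criterion_2}) take the maximum over all prefixes $u$, so the dependence of $u''$ on $k$ does not affect the bound.
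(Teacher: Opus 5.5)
Your proposal is correct and takes essentially the same route as the paper. Both rest on three ingredients: the dictionary identifying Birkhoff averages of cylinder indicators along $\xi^{u}(y)$ with $\nu^\ap_{u''\mapsto u'}$ (with $u''=\Phi_{y_{[0,k-1]}}(u)$), the uniform-word-frequency characterisation of unique ergodicity, and the identity $\sum_{u'}\nu^\ap_{u\mapsto u'}=\nu^\ap$ for the equivalence of (\ref{e.unique_ergodicity_criterion_1}) and (\ref{e.unique_ergodicity_criterion_2}). You are in fact more careful than the paper's proof, which leaves three points implicit that you make explicit: the minimality of $X$ (needed to invoke Proposition~\ref{p.unique_ergodicity_via_word_frequecies}, and obtained via positivity of the limiting frequencies), the necessity direction, and the upgrade of the one-sided condition (\ref{e.unique_ergodicity_criterion_2}) to a two-sided one.
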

\proof \quad We first show that unique ergodicity is equivalent to
(\ref{e.unique_ergodicity_criterion_2}). To that end, fix $\hat u\in\cA^{\ell-1}$ and $k\in\N$ and let
$u=\Phi_{y_{[0,k-1]}}(\hat u)$. Then, for any $u'\in\cA^{\ell-1}$, we have
\begin{eqnarray*}
  \lefteqn{ \ntel \jnergsum \ind_{[\xi^{u'}(w)]^+_{|w|+\ell-1}}\circ\sigma^{k+j}\lb \xi^{\hat u}(y)\rb \ =
  \ \ntel\jnergsum \ind_{[\xi^{u'}(w)]^+_{|w|-\ell-1}} \circ \sigma^j\lb \xi^u\lb\sigma^k(y)\rb \rb} 
    \\ & = & \nu^\ap_{u\mapsto u'}\lb w,y_{[k,k+n+|w|-2]}\rb \ \nKonv
           \ \frac{\mu_Y\lb[w]^+_{|w|}\rb}{\kappa^{\ell-1}} \ , \hspace{11eM}
\end{eqnarray*}
where (\ref{e.unique_ergodicity_criterion_2}) implies that the convergence is uniform in $k\in\N$.
Hence, the sequence $\xi^u(y)$ has uniform word frequencies and
Proposition~\ref{p.unique_ergodicity_via_word_frequecies} yields the unique ergodicity of
$(X,\sigma)$. Note that (\ref{e.unique_ergodicity_criterion_2}) in combination with the uniform
ergodic theorem implies that unique invariant measure must be the measure $\mu_X$ defined by
(\ref{e.product_measure_1}).\smallskip

It remains to show the equivalence between (\ref{e.unique_ergodicity_criterion_1}) and
(\ref{e.unique_ergodicity_criterion_2}). The fact that (\ref{e.unique_ergodicity_criterion_2}) implies
(\ref{e.unique_ergodicity_criterion_1}) is obvious. Conversely, in order to see that
(\ref{e.unique_ergodicity_criterion_1}) implies (\ref{e.unique_ergodicity_criterion_2}), note that the
unique ergodicity of $(Y,\sigma)$ yields
\[
\nu^\ap\lb w,y_{[k,k+n]}\rb \ \nKonv \ \mu_Y\lb [w]^+\rb \ ,
\]
and the convergence is uniform in $k\in\N$. However, we also have that
\[
\nu^\ap\lb w,y_{[k,k+n]}\rb \ = \ \sum_{u'\in\cA^{\ell-1}} \nu^\ap_{u\mapsto u'}\lb w,y_{[k,k+n]}\rb \ , 
\]
where $u\in\cA^{\ell-1}$ is arbitrary. In combination with (\ref{e.unique_ergodicity_criterion_1}),
this implies (\ref{e.unique_ergodicity_criterion_2}).\qed\medskip

In order to estimate the quantities $D^\ap(w,v)$, the following statement can be useful.

\begin{lem}\label{l.word_balance_concatenation}
  In the situation of Theorem~\ref{t:unique_ergodicity}, suppose that $v,w\in\cW(Y)$ and $v$ is a
  concatenation of words $v=v^{(0)}\ldots v^{(M)}$ and $|w|\leq |v^{(j)}|$ for all $j=0\ld M$. Then
  \begin{equation} \label{e.word_balance_concatenation}
    D^\ap(w,v) \ \leq \ \sum_{j=0}^M \frac{|v^{(j)}|-|w|+1}{|v|-|w|+1} D^\ap\lb w,v^{(j)}\rb +
    \frac{M\cdot(|w|-1)}{|v|-|w|+1} \ . 
  \end{equation}
\end{lem}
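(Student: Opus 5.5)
The plan is to split the set of occurrences $\Ap(w,v)$ according to the block $v^{(j)}$ in which they start, and to treat the boundary-straddling occurrences as an error term. Let $s_j=|v^{(0)}\ldots v^{(j-1)}|$ denote the starting position of $v^{(j)}$ in $v$. Every $i\in\Ap(w,v)$ either satisfies $s_j\leq i\leq s_j+|v^{(j)}|-|w|$ for some $j$, in which case $i-s_j\in\Ap(w,v^{(j)})$, or it lies in one of the $M$ boundary windows $\{s_j+|v^{(j)}|-|w|+1\ld s_{j+1}-1\}$, $j=0\ld M-1$. Each such window contains exactly $|w|-1$ positions. Conversely, each element of $\Ap(w,v^{(j)})$ yields an element of $\Ap(w,v)$ after shifting by $s_j$. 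The hypothesis $|w|\leq|v^{(j)}|$ guarantees that these interior ranges are nonempty and that the bookkeeping is consistent.

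The key step is to track the permutations. Fix $u\in\cA_\kappa^{\ell-1}$ and put $u_j=\Phi_{v_{[0,s_j-1]}}(u)$. By Proposition~\ref{p.intro_prefix_permutations} we have $\Phi_{v_{[0,s_j+i-1]}}=\Phi_{v^{(j)}_{[0,i-1]}}\circ\Phi_{v_{[0,s_j-1]}}$. Hence for $u'\in\cA_\kappa^{\ell-1}$ an interior occurrence $s_j+i$ lies in $\Ap_{u\mapsto u'}(w,v)$ if and only if $i\in\Ap_{u_j\mapsto u'}(w,v^{(j)})$. This gives, for all $u',u''$,
\[
\#\Ap_{u\mapsto u'}(w,v)-\#\Ap_{u\mapsto u''}(w,v) \ = \ \sum_{j=0}^M\left(\#\Ap_{u_j\mapsto u'}(w,v^{(j)})-\#\Ap_{u_j\mapsto u''}(w,v^{(j)})\right)+E \ ,
\]
where $E$ is the difference of the boundary counts.

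For each $j$, the absolute value of the $j$-th summand is at most $(|v^{(j)}|-|w|+1)\,D^\ap_{u_j}(w,v^{(j)})\leq(|v^{(j)}|-|w|+1)\,D^\ap(w,v^{(j)})$. For the error term, each boundary window contributes at most $|w|-1$ occurrences, and these are split among the different targets $u'$. The contributions to $u'$ and to $u''$ are both nonnegative and each is at most $|w|-1$, so their difference is bounded by $|w|-1$ per window. This gives $|E|\leq M(|w|-1)$. Dividing by $|v|-|w|+1$ and taking maxima over $u',u''$ and $u$ yields (\ref{e.word_balance_concatenation}).

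The only delicate point is the boundary estimate: the coefficient $M(|w|-1)$ rather than $2M(|w|-1)$ relies on bounding a difference of two nonnegative counts by the larger one, not by their sum. Apart from that, the argument is routine index bookkeeping, making sure the interior ranges and the windows partition $\{0\ld|v|-|w|\}$.
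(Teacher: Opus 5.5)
Your argument is correct and is essentially the paper's proof. The paper likewise restricts $\Ap_{u\mapsto u'}(w,v)$ to the interior ranges $[s_j,s_{j+1}-|w|]$, identifies them with shifted copies of $\Ap_{\Phi_{v_{[0,s_j-1]}}(u)\mapsto u'}(w,v^{(j)})$ via the composition rule of Proposition~\ref{p.intro_prefix_permutations}, and charges the remaining $M(|w|-1)$ boundary positions to the error term; your write-up just makes explicit the bookkeeping that the paper leaves to the reader.
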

\proof\quad Let $k_i=\sum_{j=0}^{i-1}|v^{(j)}|$, so that $v_{[k_i,k_{i+t}-1]}=v^{(i)}$. Then, since 
\[
\Phi_{v_{[0,k_i+k]}}\ = \ \Phi_{v^{(i)}_{[0,k]}}\circ \Phi_{v_{[0,k_i-1]}}
    \]
    for all $k=0\ld |v^{(i)}|-1$, we have
    \[
    \Ap_{u\mapsto u'}(w,v) \cap \left[k_i,k_{i+1}-|w|\right] \ = \ \Ap_{\Phi_{v_{[0,k_i-1]}}(u)\mapsto
      u'}\lb w,v^{(i)}\rb - k_i \ . 
    \]
    Combined with the definition of $D^\ap(v,w)$, this yields
    (\ref{e.word_balance_concatenation}). \qed\medskip

% \begin{remark}
% %  \label{r.ue_criterion_length_2}
%   If we restrict to block codes $h_{\kappa,2}$ in our example family, then as discussed in
%   Section~\ref{ExampleFamily}, the return pertubation for any word $w\in\cW(Y)$ is given by a rotation 
%   \[
%   \Phi_w \ = \ R_{t(w)} : a  \mapsto a+t(w)\bmod \kappa
%   \]
%   on the cyclic group $\cA_\kappa\simeq\Z_\kappa$, where $t(w)=\sum_{j=0}^{|w|-1} w_j$. In particular,
%   all the return perturbations commute with any rotation $R_b$ on $\cA_\kappa$. This immediately
%   implies that $\nu^\ap_{a\to a'}(w,v)=\nu^\ap_{a+b\to a'+b}$ holds for all
%   $a,a',b\in\cA_\kappa$. Therefore $D^\ap_a(w,v)$ is independent of $a$ and we have
%   $D^\ap_a(w,v)=D^\ap(w,v)$ for all $a\in\cA_\kappa$.
  %This greatly simplifies the proof of unique
  %ergodicity and the application of Theorem~\ref{t:unique_ergodicity} in this situation, and we will
  %make use of this fact in the next section.
%\end{remark}

\section{Permutative sliding block codes and Toeplitz flows}\label{ToeplitzFlows}

\subsection{Basic notions for Toeplitz flows}

A point $\xi \in \cA^{\Z}$ is called a {\em Toeplitz sequence} if it is not periodic and has the
property that
\begin{equation}\label{e.toeplitz_condition_two-sided}
	\textit{for all $n\in \Z$ there exists $p\in \N$ such that ${\xi}_{n} = {\xi}_{n+pk}$ holds
	for any $k\in \Z$.}
\end{equation}
By $p(\xi,n)$ we denote the least integer $p$ that satisfies (\ref{e.toeplitz_condition_two-sided}).
Given a point $\xi \in \cA_\kappa^{\Z}$, $a\in\cA_\kappa$ and any $p\in \N$, we let
$\Per(\xi,p,a) = \{n\in \Z\ |\ {\xi}_{n+kp} = a \ \textrm{for all}\ k\in \Z \}$, and define the set of
$p$-periodic positions of $\xi$ as
\[
  \Per(\xi,p) \ = \ \bigcup_{a\in\cA_\kappa} \Per(\xi,p,a) \ .
\]
The set of all periodic positions of $\xi$ is then given as $\Per(\xi) = \bigcup_{p\in \N}\Per(\xi, p)$.
We call $p\in \N$ an {\em essential period} of $\xi$ if $\Per(\xi,p)\neq \varnothing$ and there exists
no $q<p$ such that $\Per(\xi,p,a)\subseteq \Per(\xi,q,a)$ holds for all $a\in\cA_\kappa$.

\begin{remark}
  Note that if $\Per(\xi,p,a)\subseteq\Per(\xi,q,a)$, then we actually have
  $\Per(\xi,p,a)=\Per(\xi,\gcd(p,q),a)$, where $\gcd$ denotes the greatest common
  divisor. Hence, in order to see that $p$ is essential, it suffices that
  $\Per(\xi,p,a)\neq\Per(\xi,q,a)$ for all $q<p$ with $q|p$.
\end{remark}

For any
$\xi\in \cA_{\kappa}^{\Z}$ and $p\in \N$, we further define the $p$-skeleton $\hat{\xi}^p \in
\cA_{\kappa+1}^{\Z}$ of $\xi$ by
\begin{equation}\label{e.p_skeletons}
	\hat{\xi}^{p}_n \ = \ \begin{cases} \xi_n\ \textrm{if}\ n\in
        \Per(\xi,p)\\ \kappa\ \textrm{if}\ n\notin \Per(\xi,p)\end{cases} \ , \quad n\in\Z \ . 
\end{equation}
Then $\hat{\xi}^p$ is $p$-periodic and $p$ is an essential period of $\xi$ if it is the least
period of $\hat{\xi}^{p}$.  A strictly increasing sequence of positive integers $P = (p_k)_{k\in \N}$
is called a {\em scale} if $p_k \mid p_{k+1}$ for all $k\in \N$.  It is called a {\em period
  structure} of a Toeplitz sequence $\xi \in \cA_\kappa^{\Z}$ if
\begin{itemize}
	\item for all $k\in \N$, $p_k$ is an essential period of $\xi$;
	\item $\bigcup_{k\in \N} \Per(\xi, p_k) = \Z$.
\end{itemize}

Given a Toeplitz sequence $\xi \in \cA_\kappa^{\Z}$, we call $X_{\xi}$ a {\em Toeplitz subshift}.  We
have
\begin{thm}[\cite{JacobsKeane1969ToeplitzSequences}]
	Every Toeplitz subshift is minimal.
\end{thm}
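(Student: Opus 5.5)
Since $X_\xi$ is by definition the orbit closure of $\xi$, and a subshift is minimal as soon as it is the orbit closure of an almost periodic point, it suffices to show that $\xi$ is almost periodic. We use the characterisation recalled in the preliminaries: a sequence $x\in\cA_\kappa^\Z$ is almost periodic if and only if every subword $w\triangleleft x$ occurs in $x$ with uniformly bounded gaps between successive occurrences. Once this is established for $\xi$, the orbit closure $\overline{\cO_\sigma(\xi)}=X_\xi$ is minimal. (One should also note that $X_\xi$, defined via \eqref{e.permitted_words_subshift} as $X_{\cW(\xi)}$, coincides with the orbit closure of $\xi$. This holds because a sequence all of whose finite subwords occur in $\xi$ is a limit of shifts of $\xi$, and conversely.)

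The key step uses the Toeplitz condition \eqref{e.toeplitz_condition_two-sided}. Let $w\triangleleft\xi$, say $w=\xi_{[n,n+m-1]}$. For each position $j\in\{n\ld n+m-1\}$ choose $p_j\in\N$ with $\xi_j=\xi_{j+p_jk}$ for all $k\in\Z$. Let $p=\mathrm{lcm}(p_n\ld p_{n+m-1})$. Then $\xi_{j+pk}=\xi_j$ for all $j\in[n,n+m-1]$ and all $k\in\Z$, so $\xi_{[n+pk,n+pk+m-1]}=w$ for every $k\in\Z$. Hence $w$ occurs in $\xi$ at all positions of the arithmetic progression $n+p\Z$. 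In particular, the gaps between consecutive occurrences are at most $p$, which proves almost periodicity. (If the subword lies at a position not starting at $n$, the same argument applies with that starting position.)

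There is no real obstacle here. The only point requiring care is the identification of $X_\xi$ with the orbit closure and the use of the standard equivalence between almost periodicity of $\xi$ and minimality of its orbit closure, both of which are stated in the preliminaries. The non-periodicity assumption on $\xi$ is not needed for minimality; it only ensures that the subshift is infinite.
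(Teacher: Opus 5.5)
Your proof is correct: taking the least common multiple of the local periods of the positions in a window shows that every word of $\xi$ reappears along a full arithmetic progression $n+p\Z$. Hence $\xi$ is uniformly recurrent, and its orbit closure, which you correctly identify with $X_\xi=X_{\cW(\xi)}$, is minimal.

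The paper gives no proof of this statement and only cites Jacobs--Keane, so there is no argument of its own to compare against. Your argument is the standard one behind the cited result. The one ingredient you import from the preliminaries, that bounded gaps imply minimality of the orbit closure, follows in one line. If every word of $\xi$ occurs with bounded gaps, then every $x\in\overline{\cO_\sigma(\xi)}$ contains every word of $\xi$, whence $\xi\in\overline{\cO_\sigma(x)}$.
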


For a Toeplitz sequence $\xi\in \cA_\kappa^{\Z}$ and a period structure $P = (p_k)_{k\in \N}$ of $\xi$, we
define the density of periodic positions of $\xi$ with respect to $P$ as
\[
	\overline D(\xi, P) \ = \  \lim_{k\rightarrow \infty} \frac{\# \Per(\xi, p_k)\cap [0,p_k-1]}{p_k}.
\]
Note that $\frac{\# \Per(\xi, p_k)\cap [0,p_k-1]}{p_k}$ is non-decreasing in $k$ and bounded from
above by~$1$, so that the limit always exists (and can be replaced by the supremum).  Suppose
$Q=(q_k)_{k\in \N}$ is another period structure of $\xi$. Then for any $k\in \N$, there exists
$N\in \N$ such that $p_k\mid q_m$ and $q_k\mid p_m$ for $m\geq N$. Hence
$\overline D(\xi,P)=\overline D(\xi,Q)$ and we can define the density of $\xi$ as
$\overline D(\xi) = \overline D(\xi,P)$, where $P$ is any period structure of~$\xi$.  A Toeplitz
sequence $\xi\in \cA_\kappa^{\Z}$ is called {\em regular} if $\overline D(\xi) = 1$ and {\em
  irregular} if $\overline D(\xi)<1$.  A Toeplitz subshift $X\subseteq \cA_\kappa^{\Z}$ is then called
regular if it contains a regular Toeplitz sequence and irregular otherwise.

\begin{thm}[\cite{JacobsKeane1969ToeplitzSequences}]
	A regular Toeplitz subshift is uniquely ergodic.
\end{thm}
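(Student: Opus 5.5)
We have to prove that every regular Toeplitz subshift $X_\xi$ is uniquely ergodic. Minimality of $X_\xi$ is already given by the preceding theorem, so by Proposition~\ref{p.unique_ergodicity_via_word_frequecies} it suffices to show that the regular Toeplitz sequence $\xi$ itself has uniform word frequencies. Fix a period structure $P=(p_k)_{k\in\N}$ of $\xi$. Regularity means that the density $d_k=\#(\Per(\xi,p_k)\cap[0,p_k-1])/p_k$ tends to $1$.

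Now fix a word $w$ of length $m$ and an $\varepsilon>0$. Call a position $n$ \emph{$k$-good} if all of $n,\ldots,n+m-1$ lie in $\Per(\xi,p_k)$. The fraction of $k$-bad positions in any window $[j,j+p_k-1]$ is at most $m(1-d_k)$. This holds because $\Per(\xi,p_k)$ is $p_k$-periodic, so each of the $m$ shifted copies of its complement has density exactly $1-d_k$ in every window of length $p_k$. Choose $k$ with $m(1-d_k)<\varepsilon$. Since the $p_k$-skeleton $\hat\xi^{p_k}$ is $p_k$-periodic, whether $w$ occurs at a $k$-good position $n$ depends only on $n \bmod p_k$. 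Let $c_k$ be the number of residues $r\in[0,p_k-1]$ that are $k$-good and at which $w$ occurs.

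In any window of length $N\ge p_k$, starting anywhere in $\Z$, the number of occurrences of $w$ therefore lies between $(N/p_k-1)c_k$ and $(N/p_k+1)c_k+\varepsilon N+p_k$. Dividing by $N$ and letting $N\to\infty$, all averages over windows of length $N$ lie eventually, uniformly in the starting point, in an interval of the form $[c_k/p_k,\,c_k/p_k+\varepsilon]$ up to $o(1)$.

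The quantities $c_k/p_k$ increase with $k$: a $k$-good occurrence is also $k'$-good for $k'\ge k$, because $p_k\mid p_{k'}$ gives $\Per(\xi,p_k)\subseteq\Per(\xi,p_{k'})$. They are also bounded, so they converge to a limit $\bar\nu(\xi,w)$. Letting $\varepsilon\to0$, which forces $k\to\infty$, the upper and lower uniform bounds squeeze together at this limit, which yields (\ref{e.uniform_word_frequencies}). The only delicate point is the uniformity in the starting point $k\in\Z$. It comes exactly from the periodicity of the skeleton: the density estimate holds in every $p_k$-window, not merely on average. Proposition~\ref{p.unique_ergodicity_via_word_frequecies} then gives unique ergodicity of $X_\xi$.
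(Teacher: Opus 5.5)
Your argument is correct. The paper gives no proof of this statement; it only quotes it from Jacobs and Keane, so there is no internal argument to compare yours with. What you give is a complete, self-contained proof along the standard frequency route:
\begin{itemize}
\item the $p_k$-skeleton is periodic;
\item its holes have density $1-d_k\to 0$ in \emph{every} window of length $p_k$;
\item this yields two-sided uniform bounds on occurrence counts, which feed directly into Proposition~\ref{p.unique_ergodicity_via_word_frequecies}.
\end{itemize}

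The one step you state quickly is the final squeeze. There you need $\lim_{k'} c_{k'}/p_{k'}\le c_k/p_k+\varepsilon$. This holds because the $k'$-good occurrences of $w$ that are not $k$-good all sit at $k$-bad positions, so $c_{k'}/p_{k'}-c_k/p_k\le m(1-d_k)$ for $k'\ge k$. Note also that you never use essentiality of the $p_k$, only a scale along which the density of periodic positions tends to $1$.

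For comparison, the other common proof goes through the factor map onto the odometer determined by the scale $(p_k)$:
\begin{itemize}
\item every invariant measure on $X_\xi$ projects to Haar measure, since the odometer is uniquely ergodic;
\item regularity makes the fibres singletons over a set of full Haar measure, because for each fixed position $n$ the probability that $n$ is still a hole at level $k$ is $1-d_k\to0$;
\item hence there is only one invariant measure.
\end{itemize}
That route gives more: a measure-theoretic isomorphism with the odometer. Yours is more elementary, identifies the frequencies explicitly as $\bar\nu(\xi,w)=\lim_k c_k/p_k$, and matches the word-frequency machinery the paper itself uses in Section~\ref{UniqueErgodicity}.
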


%%%%%%%%%%%%%%%%%%%%%%%%%%%%%%%%%%%%%%%%%%%%%%%%%%%%%%%%%%%%%%%%%%%%%%%%%%%%%%%%%%%%%%%%%%%%%%%%%%%%%

\subsection{Toeplitz subshifts from one-sided Toeplitz sequences}

Examples of Toeplitz subshifts are often constructed via one-sided Toeplitz sequences. Since this is
usually done only implicitely in the literature, we provide some details for the convenience of the
reader. In analogy to (\ref{e.toeplitz_condition_two-sided}), a point $\xi\in \cA_\kappa^{\N_0}$ is
called a {\em one-sided Toeplitz sequence} if it is not periodic and has the property that
\begin{equation} \label{e.toeplitz_property_one-sided}
\begin{split}	& \textit{for all $n\in \N_0$, there exists $p\in \N$ such that ${\xi}_{n} = {\xi}_{n+pk}$}\\ & \qquad \textit{holds
    for any $n\in \Z$ with $n+pk\geq 0$.}
\end{split}
\end{equation}
By $p(\xi,n)$, we denote the least integer $p$ that satisfies (\ref{e.toeplitz_property_one-sided}) for a given $n\in \N_0$ .
Given $\xi\in \cA_\kappa^{\N_0}$, we again define its associated subshift by $X_{\xi}$.  When $\xi$ is
almost periodic, we alternatively can choose any $z\in \cA_\kappa^{\Z}$ with $z_{[0,+\infty]} = \xi$
and define $X_\xi$ as the $\omega$-limit of $z$.

Given a one-sided sequence $\xi\in \cA_\kappa^{\N_0}$ and any $p\in \N$, we define the $p$-periodic
positions of $\xi$ as ${\Per}^{+}(\xi, p) = \left\{n\in \N_0\ | \ p(\xi, n)|p \right\}$.  Again, $p$
is an essential period of $\xi$ if $\Per^+(\xi, p)\neq \varnothing$ and there exists no $q<p$ such
that $\Per^+(\xi,p,a)\subseteq \Per^+(\xi, q,a)$ for any $a\in\cA_\kappa$. If we define the
$p$-skeleton $\hat\xi^p$ of a one-sided Toeplitz sequence as in (\ref{e.p_skeletons}), with
$\Per(\xi,p)$ replaced by $\Per^+(\xi,p)$, then this means that $p$ is essential if and only if it is
the least period of $\hat\xi^p$.  Similar to before, a scale $P = (p_k)_{k\in \N}$ is a period
structure of a Toeplitz sequence $\xi \in \cA_\kappa^{\N_0}$ if
\begin{itemize}
	\item for all $k\in \N$, $p_k$ is an essential period of $\xi$;
	\item $\bigcup_{k\in \N} {\Per}^+(\xi, p_k) = \N_0$.
\end{itemize}
The density of the periodic positions of $\xi$ can then be defined as above, with $\Per(\xi,p_k)$
replaced by $\Per^+(\xi,p_k)$, and will be denoted by $\overline D^+(\xi,P)$ and $\overline D^+(\xi)$,
respectively. Then $\xi$ is called regular if $\overline D^+(\xi) = 1$ and irregular
otherwise.\smallskip

We omit the proof of the following elementary statement.

\begin{lem}\label{l.toeplitz_essential_period}
  If $X\subseteq \cA_{\kappa}^{\Z}$ is a minimal subshift and $\xi,\zeta\in X$.  Then, for any
  $p\in \N$, there exists $q\in \{0\ld p-1\}$ such that $\Per(\zeta, p,a) = \Per(\xi, p,a) + q$ holds
  for all $a\in\cA_\kappa$. In particular, we have $\Per(\zeta,p)=\Per(\xi,p)+q$.  Moreover, if
  $\xi, \zeta$ are Toeplitz sequences, then any period structure of $\xi$ is also a period structure
  of~$\zeta$ and vice versa.
\end{lem}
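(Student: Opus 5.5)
The plan is to work with the $p$-skeletons. Fix $p\in\N$ and $a\in\cA_\kappa$. The first key observation is that $n\in\Per(\xi,p,a)$ depends only on the residue class $n\bmod p$: if $n\in\Per(\xi,p,a)$ then so is $n+mp$ for every $m\in\Z$. So $\Per(\xi,p,a)$ is a union of residue classes mod $p$, encoded by a subset $S_a(\xi)\ssq\Z_p$. The $p$-skeleton $\hat\xi^p$ is then the $p$-periodic sequence with $\hat\xi^p_n=a$ if $n\bmod p\in S_a(\xi)$ and $\hat\xi^p_n=\kappa$ otherwise. The claim is that $\hat\zeta^p=\sigma^{-q}(\hat\xi^p)$ for some $q\in\{0\ld p-1\}$. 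This gives $\Per(\zeta,p,a)=\Per(\xi,p,a)+q$ for all $a$ at once, and hence the union statement.

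To prove the claim, I use minimality to choose $n_k$ with $\sigma^{n_k}(\xi)\to\zeta$, passing to a subsequence so that $n_k\equiv -q\pmod p$ is constant.

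The first inclusion is $\Per(\xi,p,a)+q\ssq\Per(\zeta,p,a)$. Take $n\in\Per(\xi,p,a)$. For each $m$, the entry $\sigma^{n_k}(\xi)_{n+q+mp}=\xi_{n+q+n_k+mp}$ has index congruent to $n$ mod $p$, so it equals $a$. Passing to the limit in the finitely many coordinates involved gives $\zeta_{n+q+mp}=a$ for all $m$.

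For the reverse inclusion, I apply the same argument with the roles of $\xi$ and $\zeta$ exchanged, using that $\xi$ lies in the orbit closure of $\zeta$. This yields some $q'$ with $\Per(\zeta,p,a)+q'\ssq\Per(\xi,p,a)$ for all $a$. The two inclusions combine to $\Per(\xi,p,a)+q+q'\ssq\Per(\xi,p,a)$. Since each side is a finite union of residue classes, shifting by $q+q'$ is injective on residue classes mod $p$, so this inclusion must be an equality. Both inclusions are therefore equalities, which gives the first assertion.

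For the second part, let $(p_k)$ be a period structure of $\xi$. Since $\hat\zeta^p$ is a shift of $\hat\xi^p$, the two have the same least period, so each $p_k$ is essential for $\zeta$. It remains to show $\bigcup_k\Per(\zeta,p_k)=\Z$. Fix $n\in\Z$. Let $q_k$ be the shift for period $p_k$, so $\Per(\zeta,p_k)=\Per(\xi,p_k)+q_k$.

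This is the step I expect to be the main obstacle: a single position $n$ must eventually be covered, even though the shifts $q_k$ vary with $k$. The route is to note that the shifts are compatible. Since $\Per(\xi,p_k)\ssq\Per(\xi,p_{k+1})$ and $p_k\mid p_{k+1}$, I expect $q_{k+1}\equiv q_k\pmod{p_k}$, but only up to symmetries of the skeleton, so care is needed. I would avoid this issue by using the translation directly. As $k\to\infty$, the set $\Per(\xi,p_k)\cap[-N,N]$ eventually equals $[-N,N]$, for every $N$. Hence the $p_k$-skeleton of $\xi$ agrees with $\xi$ on growing windows around $0$.

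I would then use that $\Per(\zeta,p_k)$ is increasing in $k$ and that the densities of $\Per(\zeta,p_k)$ and $\Per(\xi,p_k)$ agree. If some $n$ were never $p_k$-periodic for $\zeta$, then $\zeta$ would contain arbitrarily long windows around $n$ that are non-periodic at the skeleton level. The plan is to show this cannot happen by translating back to $\xi$ via the $\sigma^{n_k}$ approximation, but I flag this as the delicate point and would check it carefully. The converse direction, that a period structure of $\zeta$ is one of $\xi$, follows by symmetry.
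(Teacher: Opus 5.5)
Your argument for the first assertion is correct. One approximating sequence $\sigma^{n_k}(\xi)\to\zeta$ with $n_k\equiv -q\pmod p$ gives $\Per(\xi,p,a)+q\ssq\Per(\zeta,p,a)$ for all $a$ simultaneously, and counting residue classes turns the two inclusions into equalities. The transfer of essentiality via $\hat\zeta^p=\sigma^{-q}(\hat\xi^p)$ is also fine. There is no ``symmetry'' issue either: since $p_k$ is the least period of $\hat\xi^{p_k}$, the shift $q_k$ is unique modulo $p_k$.

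The genuine gap is the covering property $\bigcup_k\Per(\zeta,p_k)=\Z$, which you leave open, and the route you sketch cannot close it. Everything it uses is available for \emph{every} $\zeta\in X$: the relation $\Per(\zeta,p_k)=\Per(\xi,p_k)+q_k$, monotonicity in $k$, equal densities, and the approximation $\sigma^{n_k}(\xi)\to\zeta$. Yet the conclusion fails for non-Toeplitz points of $X$. For example, in $X_{\mathrm{pd}}$ the two-sided sequences whose right half is the one-sided period doubling sequence have a non-periodic position $-1$, so it lies in no $\Per(\zeta,p_k)$. So the hypothesis that $\zeta$ itself is Toeplitz has to enter, and your sketch never uses it. Moreover, density information cannot decide whether one given position is covered.

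The missing idea is to start from the period that $n$ actually has in $\zeta$ and push it into the scale. Fix $n$, and put $m=p(\zeta,n)$ and $a=\zeta_n$. Since $p_k\mid p_{k+1}$, the numbers $\gcd(m,p_k)$ are nondecreasing and bounded by $m$, hence eventually equal to some $d$.

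\emph{Step 1:} $\Per(\xi,m,a)\ssq\Per(\xi,d,a)$. Take $c\in\Per(\xi,m,a)$ and $j\in c+d\Z$. Choose $k$ with $j\in\Per(\xi,p_k)$ and $\gcd(m,p_k)=d$; this is possible because the sets $\Per(\xi,p_k)$ increase and cover $\Z$. Since $j-c\in d\Z=m\Z+p_k\Z$, the classes $j+p_k\Z$ and $c+m\Z$ share a point. That point has value $a$ by $m$-periodicity of $c$ and value $\xi_j$ by $p_k$-periodicity of $j$, so $\xi_j=a$.

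\emph{Step 2:} Fix $k$ with $\gcd(m,p_k)=d$. Take $n_i$ with $\sigma^{n_i}(\xi)\to\zeta$ and $n_i\equiv -s$ constant modulo $\operatorname{lcm}(m,p_k)$. Your first-part argument then gives $\Per(\zeta,m,a)=\Per(\xi,m,a)+s$ and $\Per(\zeta,p_k,a)=\Per(\xi,p_k,a)+s$ with the \emph{same} $s$. Hence
\[
  n \ \in \ \Per(\zeta,m,a) \ = \ \Per(\xi,m,a)+s \ \ssq \ \Per(\xi,d,a)+s \ \ssq \ \Per(\xi,p_k,a)+s \ = \ \Per(\zeta,p_k,a) \ ,
\]
using Step 1 and $d\mid p_k$. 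This yields $\bigcup_k\Per(\zeta,p_k)=\Z$. Together with your essentiality argument and the symmetry between $\xi$ and $\zeta$, the lemma follows.
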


As a consequence, we can speak of a period structure of a Toeplitz subshift.

\begin{prop}\label{p.period_structure_of_toeplitz_subshift}
  If $\xi\in \cA_\kappa^{\N}$ a one-sided Toeplitz sequence, then
	\begin{itemize}
	\item $X_{\xi}$ is a Toeplitz subshift;
	\item if $P$ is a period structure of $\xi$, then $P$ is a period structure of $X_{\xi}$;
	\item $X_{\xi}$ is regular if and only if $\xi$ is regular.
	\end{itemize}
\end{prop}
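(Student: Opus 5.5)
We have to show three things for a one-sided Toeplitz sequence $\xi$ with period structure $P=(p_k)$, namely that $X_\xi$ contains a two-sided Toeplitz sequence, that $P$ is a period structure of $X_\xi$, and that regularity transfers.

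\emph{Construction of a two-sided Toeplitz point.} Since every position $n\in\N_0$ is periodic, every word of $\xi$ recurs with bounded gaps, so $\xi$ is almost periodic and $X_\xi$ is minimal. For each $k$ the $p_k$-skeleton $\hat\xi^{p_k}$ is $p_k$-periodic on $\N_0$. We extend it periodically to a $p_k$-periodic two-sided sequence $S_k\in\cA_{\kappa+1}^\Z$. Because $p_k\mid p_{k+1}$ and $\Per^+(\xi,p_k)\ssq\Per^+(\xi,p_{k+1})$, the skeletons are compatible: $S_{k+1}$ agrees with $S_k$ wherever $S_k\neq\kappa$.

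We then consider $z^{(k)}=\sigma^{m_kp_k}$ applied to some two-sided extension of $\xi$, where $m_k\to\infty$, and pass to a limit point $z\in X_\xi$. Choosing $m_k$ as multiples of all earlier periods, and taking a diagonal subsequence, every $z^{(j)}$ with $j\geq k$ reproduces the filled symbols of $S_k$ on a window $[-N_j,N_j]$ with $N_j\to\infty$. Hence $z_n=(S_k)_n$ whenever $(S_k)_n\neq\kappa$.

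Every $n\in\Z$ is congruent mod $p_k$, for some $k$, to a point of $\Per^+(\xi,p_k)$. This uses $\bigcup_k\Per^+(\xi,p_k)=\N_0$: take $n'\equiv n \bmod p_{k}$ with $n'\geq0$ and $k$ such that $n'\in\Per^+(\xi,p_k)$. So $z$ is filled everywhere by the compatible periodic skeletons, and (\ref{e.toeplitz_condition_two-sided}) holds.

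Non-periodicity of $z$ follows because a periodic $z$ would force $X_\xi$ to be finite, hence $\xi$ periodic, contradicting the assumption. So $X_\xi=X_z$ is a Toeplitz subshift.

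\emph{Period structure.} By construction $\Per(z,p_k,a)$ contains the periodic extension of $\Per^+(\xi,p_k,a)$. Conversely, a $p_k$-periodic position of $z$ is periodic along the whole residue class, and since $z$ restricted to large windows appears in $\xi$ at shifts divisible by $p_k$, the corresponding class is $p_k$-periodic in $\xi$. Thus the two-sided and one-sided $p_k$-skeletons coincide up to periodic extension. In particular essentiality, being equivalent to $p_k$ being the least period of the skeleton, transfers, and $\bigcup_k\Per(z,p_k)=\Z$ was shown above. Lemma~\ref{l.toeplitz_essential_period} then yields that $P$ is a period structure of every Toeplitz sequence in $X_\xi$, hence of $X_\xi$.

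\emph{Regularity.} Equality of skeletons gives $\#\Per(z,p_k)\cap[0,p_k-1]=\#\Per^+(\xi,p_k)\cap[0,p_k-1]$, so $\overline D(z)=\overline D^+(\xi)$. By Lemma~\ref{l.toeplitz_essential_period}, all Toeplitz sequences in $X_\xi$ have shifted periodic sets and hence the same density. Therefore $X_\xi$ is regular if and only if $\xi$ is.

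The main obstacle is the converse inclusion in the period-structure step. A position that is $p_k$-periodic in the limit $z$ could a priori only be periodic with respect to finitely many windows of $\xi$. To handle this, I would first compare along shifts by multiples of $p_k$, using that residue classes mod $p_k$ not in $\Per^+(\xi,p_k)$ contain two distinct symbols at some finite positions of $\xi$. That pair of positions is then reproduced in the windows defining $z$.
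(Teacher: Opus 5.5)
\textbf{There is a genuine gap in the construction of the two-sided Toeplitz point.}

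Your claim that every $n\in\Z$ satisfies $(S_k)_n\neq\kappa$ for some $k$ is false in general. The justification is also circular. You choose $n'$ as a representative of $n$ modulo $p_k$, then choose $k$ depending on $n'$. Once $k$ is enlarged, $n'$ is no longer congruent to $n$ modulo the new $p_k$.

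Your limit point in fact satisfies $z_n=\xi_n$ for all $n\geq 0$. Each such $n$ lies in $\Per^+(\xi,p_k)$ for some $k$, and $p_k\mid m_jp_j$ for all $j\geq k$. So you are in effect claiming that $\xi$ itself extends to a two-sided Toeplitz sequence. This is exactly what the remark following the proposition says can fail.

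Concretely, let $\xi$ be the one-sided fixed point of $\Gamma_{\mathrm{pd}}$ (the limit of the words $\tau^{(n)}$), with $P=(2^k)_{k\in\N}$.
\begin{itemize}
\item[(1)] $\Per^+(\xi,2^k)=\N_0\smin(2^k-1+2^k\N_0)$, so $(S_k)_{-1}$ is the placeholder symbol for every $k$.
\item[(2)] Since $\xi_{2m+1}=1-\xi_m$, you get $z_{-1+2p}=\xi_{2p-1}=1-\xi_{p-1}=1-z_{-1+p}$ for every $p\in\N$.
\item[(3)] Hence $-1\notin\Per(z,p)$ for all $p$, and $z$ is not a Toeplitz sequence.
\end{itemize}
The covering claim fails in the same way for the sequences built in Theorem~\ref{t.toeplitz_with_minimal_lifts}. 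There $v^{(k)}=w^{(k)}\kappa^{\ell-1}$, so the positions $-1,\ldots,-(\ell-1)$ lie in a hole class of every $S_k$. By contrast, the converse inclusion that you single out as the main obstacle is harmless for your $z$, precisely because $z$ agrees with $\xi$ on $\N_0$.

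\textbf{The missing idea} is to choose the shifts so that the limit lands in a fibre where every position, including the negative ones, is eventually periodic. Your choice instead lands over the residue $0$ modulo every $p_k$. The paper does this recursively:
\begin{itemize}
\item[(a)] Set $n_0=0$ and $n_{j+1}=n_j+r_j$, where $r_j$ is a common period of all positions of $\xi$ in the window $[n_j-j,n_j+j]$. It is obtained by enlarging $r_{j-1}$ via an $\operatorname{lcm}$ with the periods of the two new endpoints.
\item[(b)] Then $x_{[-j,j]}=\xi_{[n_j-j,n_j+j]}$ consistently defines a point of $X_\xi$.
\item[(c)] Every position of $x$ in $[-j,j]$ is $r_j$-periodic, because $n_{j'}\equiv n_j\bmod r_j$ for $j'\geq j$.
\item[(d)] Each $r_j$ divides some $p_k$, so $\bigcup_k\Per(x,p_k)=\Z$.
\end{itemize}
The skeleton comparison must then be made up to the residues $a_k$ with $n_j\equiv a_k\bmod p_k$ (along a subsequence), that is, $\Per(x,p_k)=\Per^+(\xi,p_k)-a_k+p_k\Z$. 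With this shift inserted, your arguments for the reverse inclusion, for essentiality and for the equality of densities go through essentially unchanged.
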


\begin{proof}
	Let $\xi \in \cA_{\kappa}^{\N_0}$ be a one-sided Toeplitz sequence with period structure $P =
        (p_k)_{k\in \N}$.  We first note that since $\xi$ is almost periodic, the subshift $X_\xi$ is
        minimal.

        In order to see hat $P$ is a period structure for $X_\xi$, we have to show that it is a
        period structure for some Toeplitz sequence $x\in X_\xi$. To that end, let \jfolge{n_j} and
        \jfolge{r_j} be defined by recursion on $j$ as follows. For $j=0$ we let $n_0=0$ and
        $r_0 = p(\xi, 0)$. If $n_j, r_j$ are given for some $j\geq 0$, we let $n_{j+1} = n_j + r_j$
        and
        \[
        r_{j+1} \ = \ 
        \operatorname{lcm}\left\{r_j, p(\xi, n_{j+1} -j-1), p(\xi, n_{j+1}+j+1)\right\} \ ,
        \]
        where $\operatorname{lcm}$ denotes the least common multiple. By induction on $j$, one sees
        that all positions in $[n_j-j,n_j+j]$ are $r_j$-periodic within $\xi$. Hence, we have that
        \[
        \xi_{[n_{j+1} -j, n_{j+1}+j]} \ = \ \xi_{[n_j+r_j-j,n_j+r_j+j]} \ = \  \xi_{[n_j-j, n_j+j]} \ .
        \]
        As a consequence, we obtain a well-defined sequence $x\in X_{\xi}$ by setting
        \begin{equation}\label{e.toeplitz_construction_from_one-sided}
        x_{[-j,j]} \ =  \ \xi_{[n_j-j, n_j+j]}
        \end{equation}
        for all $j\in\N$.  If we choose $z\in \cA_{\kappa}^{\Z}$ with $z_{[0,\infty)} = \xi$, then it
        follows by construction that $\lim_{j\to \infty}\sigma^{n_j}(z) = x \in X_\xi$. We claim that
        $x$ is Toeplitz.

        By going over to a subseqeuence if necessary, there exist $a_k\in \{0\ld p_k-1\}$, $k\in\N$,
        such that $n_j \bmod p_k \equiv a_k$ for all but finitely many $j$. Consequently,
          \[
            {\Per}^+(\xi, p_k) -a_k + p_k \Z \ \subseteq \ \Per(x, p_k) \ .
            \]
            Due to (\ref{e.toeplitz_construction_from_one-sided}), the backwards iterates
            $\sigma^{-n_j}(x)$ converge to a point that coincides with $\xi$ on $\N_{0}$ as
            $j\to \infty$. Hence, we have
            \[
            \left(\Per(x, p_k)+a_k\right)\cap \N_0 \  \subseteq \ {\Per}^+(\xi, p_k) \ .
            \]
            As $p_k$ is an essential period of $\xi$, this implies that it is also an essential period
            of $x$. Since $P$ is a period structure of $\xi$, we have that for any $j\in \N$ there
            exists $k\in \N$ such that $r_j \mid p_k$. Hence
            \[
            \Z \ = \ \bigcup_{j\in \N}\Per(x,
            r_j) \ \subseteq \ \bigcup_{k\in p_k}\Per(x, p_k) \ ,
            \]
            so that $x$ is indeed a two-sided Toeplitz sequence, with period structure $P$. As
            $X_{\xi}$ is minimal, we have $X_{\xi} = \overline{\{\sigma^n(x)\ |\ n\in \Z\}}$. Thus, we
            obtain that $X_{\xi}$ is a Toeplitz subshift. Moreover, it follows from the above
            construction that
            \[
              \#{\Per}(x, p_k)\cap [0,p_k-1] \ = \ \#{\Per}^+(\xi, p_k)\cap [0, p_k-1]
            \]
            for all $k\in\N$. Therefore $x$ is regular if and only if $\xi$ is a regular.
\end{proof}

\begin{remark}
  We note that while in the situation of Proposition~\ref{p.period_structure_of_toeplitz_subshift},
  $X_{\xi}$ is a Toeplitz flow, the sequence $\xi$ may not necessarily extend to a two-sided Toeplitz
  sequence. This is the case, for instance, for the period doubling sequence.
\end{remark}

%%%%%%%%%%%%%%%%%%%%%%%%%%%%%%%%%%%%%%%%%%%%%%%%%%%%%%%%%%%%%%%%%%%%%%%%%%%%%%%%%%%%%%%%%%%%%%%%%%%%%

\subsection{Construction of one-sided Toeplitz sequences} \label{ToeplitzConstruction}

A standard method to construct a one-sided Toeplitz sequence in $\cA^{\N_0}_\kappa$ is to enlarge the
alphabet $\cA_\kappa$ by an additional letter (which in our case will be $\kappa$) and to approximate
$\xi$ with $p_k$-periodic sequences $\xi^{(k)}\in\cA^{\N_0}_{\kappa+1}$ in such a way that $\xi^{(k)}$
and $\xi^{(k+1)}$ only differ at positions $n\in\N_0$ with $\xi^{(k)}_n=\kappa$. Thus, the additional
letter $\kappa$ serves as a placeholder in the construction, which is successively replaced by letters
from the original alphabet. We refer to \cite{Williams1984ToeplitzFlows} or
\cite{Downarowicz2005ToeplitzFlows} and references within for more background and the construction of
a wide array of specific examples.\smallskip

In our context, we will implement this idea in the following way. Given a scale $P = (p_k)_{k\in \N}$
with $p_1>1$, we choose a sequence of words $v^{(k)}\in \cA_{\kappa+1}^{p_k}$ of length $p_k$ and
consider the corresponding $p_k$-periodic sequences $\xi^{(k)}\in \cA_{\kappa+1}^{\N_0}$ defined by
$\xi^{(k)}_{[0, p_k-1]} = v^{(k)}$. Thereby, we require that the following conditions hold:
\begin{itemize}
\item[(T1)] $\xi^{(k+1)}_{j} = \xi^{(k)}_{j}$ whenever $\xi^{(k)}_j \neq \kappa$;
\item[(T2)] for any $k\in \N$ there exists $k'>k$ such that $\xi^{(k')}_{[0, p_k)} \in \cA_{\kappa}^{p_k}$;
\item[(T3)] if $\xi^{(k)}_j = \kappa$ for some $k\in \N$ and $j\in\N_0$, then there exists $k'>k$ and
  $m\in \N$ such that $\xi^{(k')}_{j}, \xi^{(k')}_{j + mp_k} \in \cA_{\kappa}$ and $\xi^{(k')}_j\neq
  \xi^{(k')}_{j+m p_k}$;
\item[(T4)] for any $k\in\N$, $p_k$ is the least period of $\xi^{(k)}$.
\end{itemize}

The precise role of these conditions will become clear in the proof of
\begin{prop}\label{p.toeplitz_construction}
  Suppose that (T1)--(T4) hold. If we let
  \[
  k(n) \ = \ \min\left\{k\in\N\mid \xi^{(k)}_n\neq \kappa\right\}
  \]
  and define $\xi \in\cA_\kappa^\N$ by $\xi_n \ = \ \xi^{(k(n))}_{n}$, then $\xi$ is a one-sided
  Toeplitz sequence.  Further, the scale $P$ used in the construction is a period structure of $\xi$,
  the $p_k$-skeletons $\hat\xi^{p_k}$ coincide with the sequences $\xi^{(k)}$ for all $k\in\N$ and we
  have
     \[
     \overline D^+(\xi) \ = \ \lim_{k\to\infty} \frac{\# \{0\leq j < p_k-1\mid v^{(k)}_j \neq
       \kappa\}}{p_k} \ .
     \]
\end{prop}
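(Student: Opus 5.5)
The plan is to read off every claimed property of $\xi$ from the approximating periodic sequences $\xi^{(k)}$. The central step is the identity $\hat\xi^{p_k}=\xi^{(k)}$ for every $k$.

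\emph{Well-definedness.} By (T2), for each $n$ some $\xi^{(k')}_{[0,p_k)}$ with $p_k>n$ contains no placeholder. Hence $k(n)<\infty$, and $\xi\in\cA_\kappa^{\N_0}$. By (T1), once a position is filled it is never changed. So $\xi_n=\xi^{(k)}_n$ for all $k\geq k(n)$, i.e.\ whenever $\xi^{(k)}_n\neq\kappa$.

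\emph{Periodic positions.} Suppose $\xi^{(k)}_n=a\neq\kappa$. The sequence $\xi^{(k)}$ is $p_k$-periodic, so $\xi^{(k)}_{n+mp_k}=a$ for all $m$ with $n+mp_k\geq 0$. By the previous paragraph, $\xi_{n+mp_k}=a$ as well. Thus $n$ is $p_k$-periodic in $\xi$, and the Toeplitz property (\ref{e.toeplitz_property_one-sided}) holds at every position.

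Before going further, I will record a small fact about one-sided arithmetic progressions. If a position $n$ is both $p$- and $q$-periodic, then it is $\gcd(p,q)$-periodic: apply Bezout along the progression $n+\gcd(p,q)\Z$, using that one can always move far to the right first and stay in $\N_0$. Consequently the set of periods of $n$ is exactly the set of multiples of $p(\xi,n)$. In particular, $n\in\Per^+(\xi,p_k)$ iff $n$ is $p_k$-periodic.

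\emph{Placeholder positions are not periodic.} Conversely, suppose $\xi^{(k)}_j=\kappa$. Then (T3) gives $k'$ and $m$ with $\xi^{(k')}_j$ and $\xi^{(k')}_{j+mp_k}$ both filled and distinct. By (T1) these values are $\xi_j$ and $\xi_{j+mp_k}$, so $j\notin\Per^+(\xi,p_k)$. Together with the previous paragraph this gives
\[
\Per^+(\xi,p_k)=\{n\mid \xi^{(k)}_n\neq\kappa\}
\]
with matching values, that is, $\hat\xi^{p_k}=\xi^{(k)}$. I expect this to be the main obstacle. The delicate point is that (T3) has to be applied separately to each placeholder position $j$ and not only to one representative of its residue class. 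It is also where the gcd fact is needed, so that the definition via $p(\xi,n)\mid p_k$ agrees with plain $p_k$-periodicity.

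\emph{Remaining properties.} The rest then follows in order.

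Essentiality: by (T4), $\hat\xi^{p_k}=\xi^{(k)}$ has least period $p_k$, so $p_k$ is essential.

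Covering: $\bigcup_k\Per^+(\xi,p_k)=\N_0$ because $k(n)<\infty$. Hence $P$ is a period structure.

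Density: $\#\Per^+(\xi,p_k)\cap[0,p_k-1]$ is exactly the number of non-$\kappa$ letters in $v^{(k)}$, which gives the formula for $\overline D^+(\xi)$.

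Aperiodicity: suppose $\xi$ had a period $q$. The numbers $\gcd(q,p_k)$ are non-decreasing (as $p_k\mid p_{k+1}$) and bounded by $q$, so they stabilise at some $d$ dividing $p_K$ for all large $K$. Each $n$ is both $q$-periodic and $p_{k(n)}$-periodic, hence $\gcd(q,p_{k(n)})$-periodic. Since $\gcd(q,p_{k(n)})$ divides $d$, every $n$ is $d$-periodic and therefore $p_K$-periodic. So $\xi^{(K)}=\hat\xi^{p_K}$ contains no placeholder. By (T1) this forces $\xi^{(K+1)}=\xi^{(K)}$, which has period $p_K<p_{K+1}$ and contradicts (T4). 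Thus $\xi$ is a one-sided Toeplitz sequence.
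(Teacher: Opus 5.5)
\textbf{The gcd step is false, and the rest of your proof does not need it.}

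Your ``small fact'' is wrong. Let $\xi_m=a$ for $m\in 2\N_0\cup 3\N_0$ and $\xi_m=b$ otherwise. Then position $0$ is $2$-periodic and $3$-periodic, but it is not $1$-periodic. Your Bezout argument breaks at the second move: after going from $n$ to $n+pa$, you need $n+pa$ itself to be $q$-periodic. Knowing that the single position $n$ is $q$-periodic tells you nothing about $n+pa$.

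The consequence you want to draw, namely $p(\xi,n)\mid p_k$ for every $p_k$-periodic $n$, actually fails under (T1)--(T4). Here is a construction:
\begin{itemize}
\item Take $\kappa=2$, $p_k=4\cdot 3^{k-1}$, $v^{(1)}=0222$ and $v^{(2)}=011002010022$.
\item For $k\geq 2$, obtain $v^{(k+1)}$ from $v^{(k)}$ by taking each hole residue $r$ and putting $0$ at $r$, $1$ at $r+p_k$, and leaving $r+2p_k$ open.
\end{itemize}
This satisfies the hypotheses:
\begin{itemize}
\item For $k\geq 2$ the holes of $\xi^{(k)}$ form a translate $r_k+\{0,5,6\}$ in $\Z/p_k\Z$ with $r_k\to\infty$. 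This gives (T2) and (T4).
\item (T1) holds by construction.
\item (T3) is easily checked; at level $1$, each nonzero class mod $4$ receives both letters in $v^{(2)}$.
\end{itemize}
Now $\xi^{(1)}_0=0$. Yet $\xi$ vanishes on $3\N_0$ and $\xi_1=\xi_2=1$, so $p(\xi,0)=3\nmid 4=p_1$, while $p(\xi,4)=4$. Hence, with the literal definition $\Per^+(\xi,p)=\{n\mid p(\xi,n)\mid p\}$, the identity $\hat\xi^{p_1}=\xi^{(1)}$ is false; this $\hat\xi^{4}$ is not even $4$-periodic. No lemma can reconcile the two notions in general.

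\textbf{How to repair it.} The paper's proof implicitly takes the right route when it says $n\in\Per^+(\xi,p_{k(n)})$ ``by construction''. Read $\Per^+(\xi,p)$ as the set of positions that are $p$-periodic in the plain sense. This is the one-sided analogue of $\Per(\xi,p)$, and it is the reading under which $\hat\xi^p$ is $p$-periodic, as the notion of essential period presupposes.

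With that reading, delete the gcd fact. Your remaining steps then prove the proposition and coincide with the paper's argument:
\begin{itemize}
\item finiteness of $k(n)$ via (T2);
\item stability via (T1);
\item $p_k$-periodicity of positions filled at level $k$;
\item (T3) applied to each hole separately;
\item then essentiality, covering and density.
\end{itemize}

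In your aperiodicity step, the inference ``$q$- and $p_{k(n)}$-periodic $\Rightarrow$ $\gcd(q,p_{k(n)})$-periodic'' is valid, but you should give the real reason. Since $q$ is a period of the whole sequence, every intermediate position on the Bezout path is $q$-periodic. With that justification, your aperiodicity argument is a rigorous version of the paper's one-line remark that the $p_k$ are unbounded.
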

\begin{proof}
  Note that due to (T2), we have that $k(n)$ is finite for all $n\in\N_0$, and by $(T1)$ we obtain
  that $\xi^{(k)}_n=\xi^{(k(n))}_n$ for all $k\geq k(n)$. Further, we have $n\in\Per^+(\xi,p_{k(n)})$
  by construction, while (T3) yields $n\notin \Per^+(\xi,p_{k'})$ for any $k'<k(n)$. Consequently
  \[ {\Per}^{+}(\xi,p_k) \ = \ \left\{ n\in\N_0\mid \xi^{(k)}_n\neq \kappa\right\}
      \]
      and therefore $\hat\xi^{p_k} = \xi^{(k)}$.  Since $p_k$ is the least period of $\xi^{(k)}$ by
      (T4), this implies that $P$ is a period structure of $\xi$. In particular, we obtain that $\xi$
      is Toeplitz, since every symbol is repeated with some period $p_k$ and $\xi$ is aperiodic since
      the $p_k$ are not uniformly bounded.
\end{proof}
\begin{remark}
  We note that (T4) can be ensured by requiring the stonger condition
\begin{itemize}
\item[(T4')] for every $k\in\N$ and $q<p_k$ with $q|p_k$, there exist $n,n'\in\N$ such that
  $q|(n-n')$, $\xi^{(k)}_n,\xi^{(k)}_{n'}\neq \kappa$ and $\xi^{(k)}_n\neq \xi^{(k)}_{n'}$.
\end{itemize}
In this case, the fact that $p_k$ is the least period of $\xi^{(k)}$ is already ensured by those
positions with $\xi^{(k)}_n\neq \kappa$. We will use this observation in some constructions below.

\end{remark}

%%%%%%%%%%%%%%%%%%%%%%%%%%%%%%%%%%%%%%%%%%%%%%%%%%%%%%%%%%%%%%%%%%%%%%%%%%%%%%%%%%%%%%%%%%%%%%%%%%%%%

\subsection{Minimality of permutative lifts of Toeplitz flows}

We now aim to explore the relations between Toeplitz constructions and the lifting procedure via
permutative block codes. Our first observation is the following.

\begin{thm}\label{t.toeplitz_with_minimal_lifts}
For any $\ell,\kappa\in \N$ there exists a Toeplitz flow $Y\subseteq \cA_\kappa^{\Z}$ such that for any permutative block code $h\in \cF(\kappa, \ell)$ the lift $X=\eta_h^{-1}(Y)$ is minimal.
\end{thm}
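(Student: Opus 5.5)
Given $\ell,\kappa$, I will build a single one-sided Toeplitz sequence $\xi$ by the scheme of Section~\ref{ToeplitzConstruction} and take $Y=X_\xi$, a Toeplitz subshift by Propositions~\ref{p.toeplitz_construction} and \ref{p.period_structure_of_toeplitz_subshift}. Minimality of each lift will come from Theorem~\ref{t:intro_minimality}(iv), applied with $w^{(n)}=\xi_{[0,p_n-1]}$, and the condition will be checked simultaneously for all permutative $h\in\cF(\kappa,\ell)$. This is feasible because $\cF(\kappa,\ell)$ is finite, so the countably many requirements can be interleaved. The key tool is Lemma~\ref{l:independence_lemma}. For fixed $w,v$ and $u$, the map $u'\mapsto\Phi_{wu'v}(u)$ is a bijection. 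Hence if $Y$ contains the words $w\,u'\,v\,w$ for \emph{every} $u'\in\cA_\kappa^{\ell-1}$, with a fixed filler $v$, then $\{\Phi_{wu'v}\}_{u'}\subseteq\cR(w)$ moves the point $u$ to every other point, for every $h$. In particular $\cR(w)$ acts transitively for every permutative $h$, with no dependence on $h$ at all.

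The construction runs by induction on $k$. Let $p_{k+1}=N_kp_k$ with $N_k$ large. At stage $k$ the placeholder positions in $[0,p_k)$ of $v^{(k)}$ are filled within $v^{(k+1)}$ as follows. Write $w=\xi_{[0,p_k-1]}$, which is already fully determined because (T2) is arranged by filling all holes in the first block at each stage. Among the $N_k$ copies of the $p_k$-block inside $[0,p_{k+1})$, choose $\kappa^{\ell-1}$ consecutive blocks. In block number $j$, let the first $\ell-1$ hole positions immediately after the word $w$ be filled by the $j$-th element $u'_j$ of $\cA_\kappa^{\ell-1}$. The remaining holes are filled so that the stretch between consecutive occurrences of $w$ reads $u'_jv$ with one common $v$. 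Concretely, I will take the following skeleton: $v^{(k)}$ consists of a fully filled prefix $w$ of length $p_k-(\ell-1)-r$ (relabelled), followed by a hole window of length $\ell-1$ and then fixed symbols. In the next level, the windows in block $j$ receive $u'_j$, and the leftover windows in the other blocks are left as holes for later stages. The next block then begins again with the fixed prefix, so $w\,u'_j\,v\,w\in\cW(Y)$ holds for every $u'_j$. I must also make the hole fraction shrink. In fact irregularity is harmless here, since the statement only asks for a Toeplitz flow.

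The remaining requirements are (T3) and (T4). For (T4), distinct letters appear in the windows filled at stage $k+1$, and by (T4') this forces $p_{k+1}$ to be the least period. For (T3), every hole is eventually filled with two different values in two copies spaced by a multiple of $p_k$; this is ensured because all of $\cA_\kappa^{\ell-1}$, which contains distinct letters when $\kappa\ge2$, is inserted into the same window at different blocks. The case $\kappa=1$ or $\ell=1$ is trivial. The step I expect to be the main obstacle is this bookkeeping: one has to ensure that the prefix $w$ of stage $k$ is still followed, at stage $k+1$, by a common filler $v$ and then $w$ again, despite holes nested inside $v$ that are filled later. I will handle it by placing all holes of $v^{(k)}$ in the single window immediately after the prefix. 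Then the later words differ only inside the $u'$ window, which is exactly the freedom Lemma~\ref{l:independence_lemma} needs. Finally, (iv) holds with $w^{(n)}$ nested prefixes, and each $\cR(w^{(n)})$ is already transitive.
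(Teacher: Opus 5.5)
Your proposal is essentially the paper's proof. The paper also uses one placeholder window of length $\ell-1$ per $p_k$-block, filled at the next stage so that $w^{(k)}u'vw^{(k)}\in\cW(Y)$ for every $u'\in\cA_\kappa^{\ell-1}$; then Lemma~\ref{l:independence_lemma} makes $\cR(w^{(k)})$ transitive for every permutative $h$ at once, and Theorem~\ref{t:intro_minimality} applies (the paper simply takes the filler $v$ empty, i.e.\ $w^{(k+1)}=w^{(k)}u^{(k,0)}\cdots w^{(k)}u^{(k,q_k-2)}w^{(k)}$).

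Make your bookkeeping consistent with your own single-window resolution. The windows of the remaining blocks must be filled arbitrarily at stage $k+1$, except for one, rather than left as holes. (T4) then follows directly because the placeholder occupies only one window of $v^{(k)}$; it does not follow from your appeal to (T4'), since the presence of distinct letters in the new windows does not by itself give the required pair of positions for every proper divisor $q$ of $p_{k+1}$.
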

\begin{proof}

  Fix any permutative block code $h\in\cF(\kappa,\ell)$. Consider a scale $P = (p_k)_{k\in \N_0}$ with
  $p_1 > \ell-1$, $q_k = \frac{p_{k+1}}{p_k}> \kappa^{\ell-1}$ for any $k\in \N_0$. By
  recursion on $k$, we construct two sequences of words $w^{(k)}$ and $v^{(k)}=w^{(k)}\kappa^{\ell-1}$
  of length $p_k-\ell+1$ and $p_k$, respectively, so that the $p_k$-periodic sequences $\xi^{(k)}$
  with $\xi^{(k)}_{[0,p_k-1]}=v^{(k)}$, $k\in\N$, satisfy properties (T1)--(T4) above.

We first choose a word $w^{(1)}\in \cA_{\kappa}^{p_0 - \ell +1}$ such that (T4) is satisfied for
$v^{(1)} = w^{(1)}\kappa^{\ell-1}$. Then, if $v^{(k)}=w^{(k)}\kappa^{\ell-1}$ is already given for
some $k\geq 1$, we choose $u^{(k,0)}\ld u^{(k,q_k-2)}\in \cA_\kappa^{\ell-1}$ such that
\begin{equation}\label{e.toeplitz_minimal_lift_construction}
  \left\{u^{(k,i)} \mid i=0\ld \kappa^{\ell-1}-1\right\}\ = \ \cA_{\kappa}^{\ell-1} \ 
\end{equation}
(and $u^{(k,\kappa^{\ell-1})}\ld u^{(k,q_k-2)}$ are arbitrary). Now, we define
\[
w^{(k+1)} \ = \ w^{(k)}u^{(k,0)}\ld w^{(k)}u^{(k,q_k-2)}w^{(k)} \eqand v^{(k+1)} = w^{(k+1)}\kappa^{\ell-1} \ . 
\]
Then (T1) and (T2) hold by construction. Further, $\xi^{(k+1)}$ differs from $\xi^{(k)}$ only on
intervals of the form $[mp_k-(\ell-1),mp_k-1]$ for $m\in\Z\smin q_k\Z$. Since we insert all the
different words in $\cA^{\ell-1}_\kappa$ into these intervals by
(\ref{e.toeplitz_minimal_lift_construction}), none of the newly filled positions can be
$p_k$-periodic. This yields (T3). Since $\kappa$ only appears in the last $\ell-1$ positions in
$v^{(k)}$, $p_k$ is the least period of $\xi^{(k)}$. This shows (T4).  As a consequence,
Proposition~\ref{p.toeplitz_construction} yields that the sequence $\xi$ resulting from this
construction is a one-sided Toeplitz sequence with period structure $P$. If we let $Y=X_\xi$, it
remains to see that the lift of $Y$ is minimal. However, it is a direct consequence of
(\ref{e.toeplitz_minimal_lift_construction}) and Lemma~\ref{l:independence_lemma} that $\cR(w^{(k)})$
acts transitively on $\cA_{\kappa}^{\ell-1}$ for each $k\in \N$. Hence, $\eta_h^{-1}(X_{\xi})$ is
minimal by Theorem~\ref{t:intro_minimality}.
\end{proof}

\begin{remark}
  The Toeplitz flows resulting from the above construction will be regular. However, with some
  straightforward modifications, it is also possible to obtain irregular examples.
\end{remark}

As the above result shows, a Toeplitz flow may admit a minimal extension when lifted by any
permutative sliding block code of a fixed length. However, the situation changes when the length of
the permutative block code is allowed to vary. The following result shows that no Toeplitz flow can
have a minimal lift under all permutative sliding block codes on the same alphabet.

\begin{thm} \label{t.Toeplitz_fixed_h_non-minimal}
	Consider a Toeplitz flow $Y\subseteq \cA_{\kappa}^{\Z}$.  Then there exist some $\ell\in\N$ and
        a permutative block code $h\in\cF(\kappa,\ell)$ such that $\eta_h^{-1}(Y)$ is not minimal.
\end{thm}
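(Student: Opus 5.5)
The plan is to exploit the periodic part of the Toeplitz structure. We use an affine block code, chosen so that the lift has a non-constant continuous shift-invariant function; this rules out minimality.

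Let $\xi\in Y$ be a Toeplitz sequence. Since $\xi$ is Toeplitz, there exist $p\in\N$, a symbol $a\in\cA_\kappa$ and a residue $s_0$ with $s_0+p\Z\ssq\Per(\xi,p,a)$, so that $\xi_{s_0+jp}=a$ for all $j\in\Z$. Set $\ell=p+1$ and define
\[
h(x_0\ldots x_p) \ = \ x_p-x_0-a \bmod \kappa \ .
\]
Both $b\mapsto h(ub)$ and $b\mapsto h(bu)$ are translations of $\Z_\kappa$, so $h\in\cF(\kappa,\ell)$ is permutative. If $x=\xi^u(y)$, then $x_{n+p}=x_n+y_n+a$. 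Hence $x_{n+p}=x_n$ whenever $y_n=a$, and along any residue class of $p$-periodic $a$-positions of $y$ the lift $x$ is constant.

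First I make the residue class depend continuously on $y\in Y$. By Lemma~\ref{l.toeplitz_essential_period}, for every $y\in Y$ there is $q(y)\in\{0\ld p-1\}$ with $\Per(y,p,a)\supseteq s_0+q(y)+p\Z$. I will pick this $q(y)$ canonically. To do so, note that $Y$ splits into the $p$ closed sets $Y_q=\overline{\{\sigma^{n}(\xi)\mid n\in q'+p\Z\}}$, where $q'\equiv -q$ is the matching offset. These sets are closed and permuted cyclically by $\sigma$, and each is contained in $\{y\mid y_{s_0+q+jp}=a\ \forall j\}$ for the appropriate shift of the residue, since that condition is closed. If they overlap, I instead fix the least valid $q$. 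Local constancy of the chosen residue would then follow from closedness and the finite partition, but a careful argument is still needed here. Let $s(y)\in\{0\ld p-1\}$ denote the resulting position, so that $y_{s(y)+jp}=a$ for all $j$, with $s$ continuous and $s(\sigma y)\equiv s(y)-1 \pmod p$. This is the step I expect to be the main technical obstacle. It amounts to showing that the rotation factor $Y\to\Z_p$ coming from the $p$-skeleton is a well-defined continuous map. I would prove it directly from minimality via the maximal equicontinuous factor structure of Toeplitz flows: the $p$-skeleton of $y$ is a shift of $\hat\xi^p$, and the shift amount is locally determined by finitely many coordinates.

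Next, define $c:X\to\cA_\kappa$ by $c(x)=x_{s(\eta_h(x))}$. It is continuous, because $s$ is locally constant and evaluation at a fixed coordinate is continuous. It is shift-invariant: when $s(y)\ge 1$ we have $s(\sigma y)=s(y)-1$, so $c(\sigma x)=x_{s(y)}$. When $s(y)=0$ we have $s(\sigma y)=p-1$, so $c(\sigma x)=x_p=x_0+y_0+a-a$. Here $y_0=a$, which gives $c(\sigma x)=x_0$.

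Finally, $c$ is non-constant on every fibre. Because $s(y)\in\{0\ld \ell-2\}$, the value $c(\xi^u(y))=u_{s(y)}$ ranges over all of $\cA_\kappa$ as $u$ varies. Then the sets $c^{-1}(b)$ for $b\in\cA_\kappa$ are nonempty, disjoint, closed and invariant, so $X=\eta_h^{-1}(Y)$ is not minimal. Equivalently, every return permutation in $\cR(w)$ fixes the coordinate $s$ for long enough words $w$, so transitivity fails in Theorem~\ref{t:intro_minimality}.
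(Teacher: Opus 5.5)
\textbf{The gap is the construction of $s$.} You flag this step yourself, and the patch you offer does not work.

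Your invariant $c(x)=x_{s(\eta_h(x))}$ needs a locally constant $s:Y\to\{0\ld p-1\}$ with $y_{s(y)+jp}=a$ for all $j$ \emph{and} $s(\sigma y)\equiv s(y)-1\pmod p$. Without this equivariance, $c$ is not shift-invariant. The reason is that $x=\xi^u(y)$ is constant along each residue class of $\Per(y,p,a)$, but it takes unrelated values $u_r$ on different classes $r$.

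The fallback ``fix the least valid $q$'' destroys exactly this equivariance. This happens when $\Per(y,p,a)\cap[0,p-1]$ contains several residues, or when the sets $Y_q$ overlap (then $Y_q=Y_{q+p'}$, with $p'<p$ the least period of the skeleton $\hat\xi^p$). In either case the minimum jumps by something other than $-1$ when it wraps around.

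Worse, for an arbitrary $p$ with $\Per(\xi,p)\neq\varnothing$, no such $s$ need exist. If it did, $e^{2\pi i s(y)/p}$ would be a continuous eigenfunction with eigenvalue $e^{-2\pi i/p}$. Take a Toeplitz flow with period structure $(2^k)_{k}$ and $p=6$: positions of period $2$ are $6$-periodic. An equivariant $s$ would then give a factor onto the rotation of $\Z_3$, which is impossible because the maximal equicontinuous factor is the $2$-adic odometer. So ``the rotation factor $Y\to\Z_p$ coming from the $p$-skeleton'' is well defined only when $p$ is an essential period.

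\textbf{Repair.} Choose $p$ essential, for instance $p=\min_n p(\xi,n)$. This is the paper's choice of $\ell-1$, and it is automatically essential: if $\hat\xi^p$ had a smaller period $p'$, then $\Per(\xi,p,b)=\Per(\xi,p',b)$ for all $b$, so some position would have period $p'<p$. The argument then runs as follows.
\begin{itemize}
\item Lemma~\ref{l.toeplitz_essential_period} gives an offset $q(y)$ with $\Per(y,p,b)=\Per(\xi,p,b)+q(y)$ for all $b$, and essentiality makes it unique mod $p$.
\item The level sets of $q$ are closed. The conditions $y_{n+jp}=b$ pass to limits, and an inclusion between two unions of equally many residue classes mod $p$ is an equality.
\item A finite partition into closed sets consists of clopen sets, so $q$ is locally constant. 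Also $q(\sigma y)=q(y)-1$.
\item Then $s(y)=s_0+q(y)\bmod p$ has the required properties, and the rest of your argument goes through.
\end{itemize}

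\textbf{Sign error.} With $h(x_0\ldots x_p)=x_p-x_0-a$ you get $x_{n+p}=x_n+y_n+a$. So constancy holds where $y_n=-a$, not where $y_n=a$. Your computation ``$x_0+y_0+a-a$'' contradicts your own formula. You need $h=x_p-x_0+a$, matching the paper's $+c$.

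\textbf{Comparison with the paper.} The paper avoids the continuity issue entirely by using constant prefixes $u=b^{p}$. Every $p$-periodic position of $\xi^{b\ldots b}(y)$ then carries the symbol $b$. Periodic positions, together with their symbols, survive up to translation in limits of shifts. Hence $\xi^{b'\ldots b'}(y)$ cannot lie in the orbit closure of $\xi^{b\ldots b}(y)$ when $b'\neq b$. That argument needs no equivariant phase function. Once repaired, your route also gives an explicit continuous invariant splitting $X$ into at least $\kappa$ clopen pieces.
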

\begin{proof}
  We fix a Toeplitz sequence $y\in Y$, let $\ell=\min\{p(y,n)\mid n\in\Z \}+1$, choose
  $n\in\Per(y,\ell-1)$, let $c=y_n$ and consider the permutative block code $h\in\cF(\kappa,\ell)$
  given by
\[
h : \cA^{\ell}_\kappa\to\cA_\kappa \quad, \quad w_0\ldots w_{\ell-1}\mapsto w_{\ell-1}-w_0+c \bmod
\kappa \ .
\]
Note that given any lift $\xi^u(y)$, $u\in\cA^{\ell-1}_\kappa$, this implies
$\xi^u(y)_{k+\ell-1}=\xi^u(y)_k+y_k-c$ for any $k\in \Z$. Since $\ell-1$ is the least period that occurs for any symbol
in $y$, we obtain that
\[
\Per\lb \xi^u(y),\ell-1\rb \ = \ \Per(y,\ell-1,c) \ .
\]
Further, given $a\in\cA_\kappa$, we obtain
\[
  \Per\lb\xi^{a\ldots a}(y),\ell-1\rb \ = \ \Per\lb \xi^{a\ldots a}(y),\ell-1,c\rb \ ,
\]
and due to Lemma~\ref{l.toeplitz_essential_period} we have
$\Per\lb x,\ell-1\rb = \Per\lb x,\ell-1,c\rb$ for any $x$ in the orbit closure of
$\xi^{a\ldots a}(y)$. However, this implies that the orbit closures of $\xi^{a\ldots a}(y)$ and
$\xi^{b\ldots b}(y)$ with $a\neq b$ are disjoint, so that $\eta_h^{-1}(Y)$ is not minimal.
\end{proof}

Finally, we show that for any fixed permutative sliding block code, there exists a Toeplitz flow with
non-minimal lift.

\begin{prop} \label{p.h_fixed_non-minimal_Toeplitz}
  Let $h\in\cF(\kappa,\ell)$ be permutative. Then there exists a Toeplitz subshift $Y\subseteq \cA_{\kappa}^{\Z}$ such that
  $X=\eta^{-1}_h(Y)$ is non-minimal.
\end{prop}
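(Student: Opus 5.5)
We construct a one-sided Toeplitz sequence $\xi$ via the scheme of Section~\ref{ToeplitzConstruction}, choosing the fillings so that all return permutations of suitable words $w^{(k)}$ lie in a proper subgroup of $\Per\lb\cA^{\ell-1}_\kappa\rb$. Non-minimality then follows from Theorem~\ref{t:intro_minimality}, (i)$\follows$(ii).

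Fix a scale $P=(p_k)$ with $p_1>\ell-1$ and $q_k=p_{k+1}/p_k$ large. As in the proof of Theorem~\ref{t.toeplitz_with_minimal_lifts}, set $v^{(k)}=w^{(k)}\kappa^{\ell-1}$ with $|w^{(k)}|=p_k-\ell+1$. We take
\[
w^{(k+1)} \ = \ w^{(k)}u^{(k,0)}w^{(k)}u^{(k,1)}\ldots w^{(k)}u^{(k,q_k-2)}w^{(k)} ,
\]
and choose each inserted block $u^{(k,i)}$ so that $\Phi_{w^{(k)}u^{(k,i)}}$ fixes a prescribed prefix. Fix $\bar u\in\cA^{\ell-1}_\kappa$. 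By Lemma~\ref{l:independence_lemma}, applied to the empty word after $u'$, the map $u'\mapsto\Phi_{w^{(k)}u'}(\bar u)$ is a bijection. Hence there is a unique $u'=:u^{(k)}_\ast$ with $\Phi_{w^{(k)}u^{(k)}_\ast}(\bar u)=\bar u$. We would like to use $u^{(k)}_\ast$ in most slots, but (T3) forces at least two distinct fillings. We therefore also pick one alternative block $u^{(k)}_{\ast\ast}\neq u^{(k)}_\ast$, used in a single slot (say $i=0$), and allow it to move $\bar u$.

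The key steps, in order, are as follows.

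First, verify (T1)--(T4). (T1) and (T2) hold by construction. (T3) holds because the filled interval at slot $0$ differs from the one at slot $1$. (T4) holds because $\kappa$ only occurs in the final $\ell-1$ positions of $v^{(k)}$. Proposition~\ref{p.toeplitz_construction} and Proposition~\ref{p.period_structure_of_toeplitz_subshift} then give a Toeplitz subshift $Y=X_\xi$.

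Second, analyse the return permutations of $w^{(k)}$. Inside $\xi$, every occurrence of $w^{(k)}$ at a multiple of $p_k$ is followed by a filled block $u$, which is either $u^{(k)}_\ast$ or a block inherited from a higher level. The blocks at multiples of $p_{k+1}$ are themselves $u^{(k+1)}_\ast$ or $u^{(k+1)}_{\ast\ast}$, and so on.

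Third, show that $\xi^{\bar u}(y)$ and $\xi^{u'}(y)$ lie in different minimal components for some $u'$. This is the main obstacle. Occurrences of $w^{(k)}$ may appear at non-aligned positions, and the higher-level blocks may move $\bar u$.

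To handle the alignment problem, we would pick $w^{(1)}$, and hence all $w^{(k)}$, with a recognisability property: $w^{(k)}$ occurs in $\xi$ only at positions $\equiv 0 \bmod p_k$. This can be enforced by starting $w^{(1)}$ with a marker word that cannot occur elsewhere, for instance a long run of a single letter, provided the inserted blocks avoid creating it. Fully forbidding motion at higher levels is impossible because of (T3). Instead, we would count: choose $q_k$ rapidly growing, so that the single exceptional slot per level has density $\sum_k 1/q_k<1$. Then the set of $n$ with $\Phi_{y_{[0,n-1]}}(\bar u)\neq\bar u$, restricted to aligned positions, is small.

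A cleaner alternative, which I would try first, avoids density arguments altogether. Take the alternative block $u^{(k)}_{\ast\ast}$ such that $\Phi_{w^{(k)}u^{(k)}_{\ast\ast}}$ also preserves a fixed nontrivial partition $\{Q,Q^c\}$ of $\cA^{\ell-1}_\kappa$, with $\bar u\in Q$. Again by Lemma~\ref{l:independence_lemma}, there are $\#Q\geq 2$ choices of $u'$ sending $\bar u$ into $Q$, which gives freedom for (T3). By induction every aligned return permutation of $w^{(k)}$ then preserves $Q$. However, Lemma~\ref{l:independence_lemma} only controls the image of $\bar u$, not the whole set $Q$, so preserving $Q$ under the full permutation is the delicate point. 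The workaround is to take $Q=\{\bar u\}$ and the partition into singletons for one specific $u$, and simply show $\bar u\notin$ the $\cR(w^{(k)})$-orbit of some $u'$ that is never reached. Granting recognisability, $\cR(w^{(k)})$ then fails to act transitively, and Theorem~\ref{t:intro_minimality} yields that $X$ is non-minimal.
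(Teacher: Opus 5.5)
\textbf{There is a genuine gap, exactly at the step you flag as the main obstacle.}

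Your holes have length $\ell-1$ and follow a fixed word. Lemma~\ref{l:independence_lemma} therefore makes the $\bar u$-fixing filling $u^{(k)}_\ast$ \emph{unique}, so any second filling $u^{(k)}_{\ast\ast}$ must move $\bar u$. Neither of your remedies controls this.

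\emph{The density argument cannot work in principle.} By Theorem~\ref{t:intro_minimality}, minimality depends only on \emph{which} permutations lie in $\cR(w)$, not on how often they occur. And $\Phi_{w^{(k)}u^{(k)}_{\ast\ast}}\in\cR(w^{(k)})$ moves $\bar u$ however sparse the exceptional slots are. In fact the construction in Theorem~\ref{t.Toeplitz_with_minimal_but_non-uniquely_ergodic_lift} has precisely this shape (mostly harmless fillings, two exceptional slots per level) and produces a \emph{minimal} lift. Sparseness can at best destroy unique ergodicity.

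\emph{The partition workaround also collapses.} Taking $Q=\{\bar u\}$ means $\Phi_{w^{(k)}u^{(k)}_{\ast\ast}}(\bar u)=\bar u$, which by bijectivity forces $u^{(k)}_{\ast\ast}=u^{(k)}_\ast$.

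\emph{Even the ``good'' gaps are uncontrolled.} The higher-level fillings $u^{(j)}_\ast$, $j>k$, are tuned to fix $\bar u$ after $w^{(j)}$, not after $w^{(k)}$. The conditions $\Phi_{w^{(j)}u'}(\bar u)=\bar u$ and $\Phi_{w^{(k)}u'}(\bar u)=\bar u$ differ as soon as $\Phi_{w^{(j)}}(\bar u)\neq\Phi_{w^{(k)}}(\bar u)$. A single exceptional slot already forces this for $j=k+1$, so the gaps of $w^{(k)}$ filled by $u^{(k+1)}_\ast$ move $\bar u$.

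\emph{The marker must be robust against arbitrary fillings.} The fillings are dictated by the lemma, so they cannot be chosen to avoid the marker.

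\textbf{The missing idea is to buy one free symbol per hole.}
\begin{enumerate}
\item Take holes of length $\ell$, i.e.\ $v^{(k)}=w^{(k)}\kappa^{\ell}$.
\item Fill them with $a\,u^{(a)}$ for $a\in\{0,1\}$. Here $u^{(a)}\in\cA_\kappa^{\ell-1}$ is the unique word with $\Phi_{w^{(1)}au^{(a)}}(\bar u)=\bar u$, given by Lemma~\ref{l:independence_lemma} applied to $w^{(1)}a$ and the empty word.
\item Use these same two fillings at every level, alternating with the parity of $k$. The differing first symbols supply the non-periodicity that (T3) demands. Meanwhile $\xi$ becomes a concatenation of the two blocks $w^{(1)}0u^{(0)}$ and $w^{(1)}1u^{(1)}$, both of whose prefix permutations fix $\bar u$.
\item Choose the marker $w^{(1)}=1^{2\ell}0^{2\ell}$. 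No filling of length $\ell<2\ell$ can create a new occurrence of it, so $w^{(1)}$ occurs only at multiples of $p_1$.
\item Hence every element of $\cR(w^{(1)})$ is a composition of the two block permutations and fixes $\bar u$. So $\cR(w^{(1)})$ is not transitive, and Theorem~\ref{t:intro_minimality} gives non-minimality.
\end{enumerate}
Breaking transitivity for this single word suffices, so there is no need to track all the $w^{(k)}$.
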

\proof We again use the same construction scheme as before and choose a scale \nofolge{p_k} with
$p_1>\ell$ and let $q_k=\frac{p_{k+1}}{p_k}$. Further, we let
\[
  w^{(1)} \ = \ 1^{2\ell}0^{2\ell} \eqand v^{(0)}\ = \ w^{(1)}\kappa^\ell \ .
\]
Note that our construction scheme then implies that the word $w^{(1)}$ appears in the resulting
sequence $\xi$ (defined by $\xi_{[0,p_k-1]}=v^{(k)}$) exactly at positions in $p_1\N_0$.  Fix some
$u\in\cA^{\ell-1}_\kappa$. Then Lemma~\ref{l:independence_lemma} implies that there exist two prefixes
$u^{(0)},u^{(1)}\in\cA_\kappa^{\ell-1}$ such that
\[
  \Phi_{w^{(1)}0u^{(0)}}(u) \ = \ \Phi_{w^{(1)}1u^{(1)}}(u) \ = \ u \ .
\]
We now recursively define the words $w^{(k)}$ and $v^{(k)}$ of our construction scheme by
\[
  w^{(k+1)} \ = \
  \begin{cases}
    \ \lb w^{(k)}0u^{(0)}\rb^{q_k-1}w^{(k)}\kappa^\ell & \ \textrm{ if } k \textrm{ is odd}\\
        \ \lb w^{(k)}1u^{(1)}\rb^{q_k-1}w^{(k)}\kappa^\ell & \ \textrm{ if } k \textrm{ is even}\\
  \end{cases} \ . 
\]
Then the fact that conditions (T1)--(T4) hold is straightforward to chack. Hence, the construction
yields a Toeplitz subshift $Y=X_\xi$. Moreover, we obtain that $\Phi_{w^{(1)}vw^{(1)}}(u)=u$ holds
whenever $w^{(1)}vw^{(1)}\in\cW(Y)$. This implies that $\cR\lb w^{(1)}\rb$ does not act transitively
on $\cA_\kappa^{\ell-1}$, so that $Y$ is non-minimal by Theorem~\ref{t:intro_minimality}.
\qed\medskip

%%%%%%%%%%%%%%%%%%%%%%%%%%%%%%%%%%%%%%%%%%%%%%%%%%%%%%%%%%%%%%%%%%%%%%%%%%%%%%%%%%%%%%%%

\subsection{Unique ergodicity of permutative lifts of Toeplitz flows}

We first show that unique ergodicity is not a direct consequence of minimality in our setting. This
can already be demonstrated by using the simple block code $h_{2,2}$ from Section~\ref{ExampleFamily}.

\begin{thm} \label{t.Toeplitz_with_minimal_but_non-uniquely_ergodic_lift}
  There exists a strictly ergodic Toeplitz subshift $Y\ssq\cA^\Z_2$ such that $\eta_{2,2}^{-1}(Y)$ is
  minimal, but not uniquely ergodic. 
\end{thm}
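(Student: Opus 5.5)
We construct $Y=X_\xi$ via the scheme of Section~\ref{ToeplitzConstruction} on the alphabet $\cA_2$, with a rapidly growing scale $P=(p_k)$ and words $v^{(k)}=w^{(k)}2$. The parity $t(w)=\sum_j w_j \bmod 2$ determines $\Phi_w$ for $h_{2,2}$. We make $Y$ regular, so that it is strictly ergodic by Jacobs--Keane and Proposition~\ref{p.period_structure_of_toeplitz_subshift}. The lift will be made minimal but not uniquely ergodic by controlling, at each stage, how often the odd-parity return permutation occurs. Concretely, we put
\[
w^{(k+1)} \ = \ w^{(k)}a_{k,1}w^{(k)}a_{k,2}\ldots a_{k,q_k-1}w^{(k)} ,
\]
with filler letters $a_{k,i}\in\{0,1\}$ and $q_k=p_{k+1}/p_k$. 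The filled positions have density $1-1/p_k$ at stage $k$, so regularity reduces to $\prod_k(1-1/p_k)\to 1$ in the appropriate sense, i.e.\ the density tends to one. This is automatic if the $p_k$ grow fast. (T1), (T2) and (T4) hold by construction. For (T3) we make sure both letters $0$ and $1$ occur among the $a_{k,i}$.

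For minimality we use Corollary~\ref{c.intro_minimality_criterion}. Since the block $w^{(k)}a w^{(k)}$ with $t(w^{(k)}a)$ odd occurs whenever $t(w^{(k)})+a$ is odd, it suffices that at every stage at least one filler $a_{k,i}$ has $t(w^{(k)})+a_{k,i}\equiv 1$. We arrange this by including fillers of both values, which also serves (T3).

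Non-unique ergodicity is the main step. We apply the criterion of Theorem~\ref{t:unique_ergodicity} and show that \eqref{e.unique_ergodicity_criterion_1} fails for the word $w=0$ (or $w=w^{(1)}$). The idea is to choose the fillers so that the parity $\Phi$ along $y$ is heavily biased on long blocks. We take almost all fillers $a_{k,i}$ with $t(w^{(k)})+a_{k,i}$ even, and only a single odd one placed at the end. Then $\Phi_{y_{[0,j-1]}}$ stays constant over the initial $(q_k-2)/q_k$ proportion of $w^{(k+1)}$. On that stretch, all occurrences of $w$ realise only the map $u\mapsto u'$ with one fixed $u'$ (for a given $u$). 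Hence
\[
D^\ap\lb w,\, w^{(k+1)}_{[0,(q_k-2)p_k]}\rb \ \geq \ \nu^\ap\lb w,\cdot\rb \ \approx \ \mu_Y([w]^+) \ > \ 0 .
\]
These windows are subwords $y_{[k',k'+n]}$ with $n\to\infty$, because $w^{(k+1)}$ appears at the start of $y$ and recurrently. So \eqref{e.unique_ergodicity_criterion_1} fails for every $y$, and $X$ is not uniquely ergodic.

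The obstacle is making the bias robust. First, $w^{(k)}$ itself has the same internal parity structure at all copies, so the parity accumulated across $w^{(k)}a$ must be exactly even. This holds because the single factor per copy is $t(w^{(k)})+a$. Second, we must ensure that this is compatible with (T3) and with keeping an odd filler at every stage. One odd filler per stage suffices for minimality, while the even fillers make up a proportion $1-1/q_k$. Choosing $q_k\to\infty$ then gives both properties. We then verify explicitly that the frequencies of $\xi^0(w)$ along $\xi^0(y)$ over these long windows stay near $\mu_Y([w]^+)$ rather than $\tfrac12\mu_Y([w]^+)$, which contradicts \eqref{e.unique_ergodicity_criterion_2}.
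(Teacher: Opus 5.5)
Your overall architecture (a Toeplitz construction with one parity flip per stage, minimality via Corollary~\ref{c.intro_minimality_criterion}, non-unique ergodicity via Theorem~\ref{t:unique_ergodicity}) matches the paper. Your treatment of regularity, (T1)--(T4) and minimality is fine. The non-unique-ergodicity step, however, rests on a false claim. On the initial stretch of $w^{(k+1)}$, only the prefix permutations $\Phi_{y_{[0,ip_k-1]}}$ at the \emph{starting positions} $ip_k$ of the copies of $w^{(k)}$ coincide. A fixed word $w$ occurs at many positions inside each copy, and those occurrences carry the permutations produced inside $w^{(k)}$ at all earlier stages. These are mixed, because each $w^{(j)}$, $2\leq j\leq k$, contains one flipped copy of $w^{(j-1)}$. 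Hence the occurrences do not all realise one map $u\mapsto u'$. The bound $D^{\ap}\geq\nu^{\ap}\approx\mu_Y([w]^+)$ is false as soon as $w\triangleleft w^{(k-1)}$, and the frequencies of $\xi^0(w)$ along $\xi^0(y)$ do not stay near $\mu_Y([w]^+)$. What passes from one stage to the next is only the imbalance $\Delta_k=\#\Ap_{0\mapsto 0}(w,w^{(k)})-\#\Ap_{0\mapsto 1}(w,w^{(k)})$. With $q_k-1$ copies starting with $\Id$ and one flipped copy, $\Delta_{k+1}=(q_k-2)\Delta_k$ up to occurrences straddling the filler letters. So $D^{\ap}(w,w^{(k)})$ is multiplied by roughly $(q_k-2)/q_k$ at every stage, cumulatively.

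You therefore need two things. First, a fixed word with a known initial imbalance and no straddling occurrences; the paper takes $w^{(1)}=110^{p_1-3}$, which occurs in $\xi$ only at positions in $p_1\Z$, making the recursion exact. Second, $\prod_k\frac{q_k-2}{q_k}>0$, i.e.\ $\sum_k 1/q_k<\infty$. Then $\inf_k D^{\ap}(w^{(1)},w^{(k)})>0$ violates \eqref{e.unique_ergodicity_criterion_1}. Your condition $q_k\to\infty$ is not enough, and this is not just a weakness of the estimate: with $q_k=k+3$, for instance, your lift is actually uniquely ergodic. Here is a sketch of why. Under the odometer measure, the indices $i_k\in\{0\ld q_k-1\}$ of the copies of $w^{(k)}$ in $w^{(k+1)}$ containing the origin are independent and uniformly distributed. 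The parity of the word from the start of the current $w^{(k)}$-block to the origin changes between levels $k$ and $k+1$ exactly when $i_k=q_k-1$. A martingale argument combined with the second Borel--Cantelli lemma then shows that the cocycle $y\mapsto y_0$ is not a measurable coboundary. Hence $\mu_Y\times m$, with $m$ the Haar measure on $\cA_2$, is ergodic for the skew product of Theorem~\ref{t:skew_product_representation}. By Furstenberg's theorem on compact group extensions, it is then the only invariant measure. With $\sum_k 1/q_k<\infty$ your construction does work, but the proof has to go through the multiplicative estimate for $\Delta_k$, as in the paper.
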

\proof\quad We use the construction theme introduced in Section~\ref{ToeplitzConstruction} and first
choose
\[
w^{(1)} \ = \ 110^{p_1-3} \ \eqand \ v^{(1)} \ = \ w^{(1)}2 \ .
\]
with $p_1\geq 6$, so that $|w^{(1)}|=p_1-1$ and $|v^{(1)}|=p_1$.  Note that according procedure
described in Section~\ref{ToeplitzConstruction} and due to the specific choice of $v^{(1)}$, the word
$w^{(1)}$ occurs exactly at positions $n\in p_1\Z$ in the resulting sequence $\xi\in\cA^\Z_2$:
\begin{equation}
  \label{e.minimal_nonunique_construction_firstproperty}
  \xi_{[n,n+p_1-2]}\ = \ w^{(1)} \quad \equi \quad n\in p_1\Z \ . 
\end{equation}
Further, we fix a sequence of integers \kfolge{q_k} with $q_k\geq 3$ and recursively define
\[
w^{(k+1)} \ = \ \lb w^{(k)}0\rb^{q_k-3}w^{(k)}1w^{(k)}1w^{(k)} \eqand v^{(k+1)} \ = \ w^{(k+1)}2 \ .
\]
Then conditions (T1)--(T4) with scale \kfolge{p_k} given by $p_k=p_1\cdot\prod_{j=1}^{k-1} q_k$ are
straightforward to check, so that Propositions~\ref{p.toeplitz_construction} and
\ref{p.period_structure_of_toeplitz_subshift} yields that $Y=X_\xi$ is a regular Toeplitz subshift.
Moreover, the following statements hold:
\begin{itemize}
\item[(i)] the words $w^{(k)}$ contain an even number of $1$s (follows by induction on $k$);
\item[(ii)] consequently, we have
  \[
    \Phi_{\lb w^{(k)}0\rb^j} \ = \ \Id_{\cA_2} \ = \ \Phi_{\lb w^{(k)}0\rb^{q_k-3}w^{(k)}1w^{(k)}1}
    \ ;
    \]
    for all $k\in\N$ and $j=1\ld q_k-3$;
  \item[(iii)] at the same time
    \[
          \Phi_{\lb w^{(k)}0\rb^{q_k-3}w^{(k)}1} \ = \ \bar {\ \cdot \ } \ ,
   \]
   where $\bar{\ \cdot \, }:\cA_2\to\cA_2, a\mapsto 1-a$, and this permutation belongs to
   $\cR\lb w^{(k)}\rb$.
 \end{itemize}
 By construction, we have $\xi_{[0,|w^{(k)}|-1]} = w^{(k)}$.  In particular, any word $w\in\cW(Y)$
 is contained in $w^{(k)}$ for some $k\in\N$. Therefore (iii), in combination with
 Theorem~\ref{t:intro_minimality}(iii), yields the minimality of $X = \eta_{2,2}^{-1}(Y)$. \smallskip

In order to see that $X$ is not uniquely ergodic, note that due
(\ref{e.minimal_nonunique_construction_firstproperty}) and properties (ii) and (iii), we have
\begin{equation}
  \label{e.minimal_nonunique_starting_estimate}
  D^\ap\lb w^{(1)},w^{(2)}\rb \ = \ \frac{q_1-1}{p_1(q_1-1)+1} \ 
\end{equation}
as well as 
\begin{eqnarray*}
  \lefteqn{  \left(\#\Ap_{0\mapsto 0}\lb w^{(1)},w^{(k+1)}\rb - \#\Ap_{0\mapsto1}\lb w^{(1)},w^{(k+1)}\rb \right) }\\
  & = & (q_k-1) \cdot \left(\#\Ap_{0\mapsto 0}\lb w^{(1)},w^{(k)}\rb - \#\Ap_{0\mapsto1}\lb w^{(1)},w^{(k)}\rb \right)
  \\ & &   - \left(\#\Ap_{0\mapsto 0}\lb w^{(1)},w^{(k)}\rb - \#\Ap_{0\mapsto1}\lb w^{(1)},w^{(k)}\rb \right) \\
  & = &  (q_k-2) \cdot \left(\#\Ap_{0\mapsto 0}\lb w^{(1)},w^{(k)}\rb - \#\Ap_{0\mapsto1}\lb w^{(1)},w^{(k)}\rb \right) \ . 
\end{eqnarray*}
This entails that 
\begin{eqnarray*}
  D^\ap\lb w^{(1)},w^{(k+1)}\rb & = & (q_k-2) \cdot \frac{|w^{(k)}|-|w^{(1)}|+1}{w^{(k+1)}-w^{(1)}+1} \cdot D^\ap\lb w^{(1)},w^{(k)}\rb
  \\ &  \geq & \frac{(q_k-2)\cdot (q_{k-1}-1)}{q_k\cdot q_{k-1}} \cdot D^\ap\lb w^{(1)},w^{(k)}\rb \ . 
\end{eqnarray*}
holds for all $k\geq 2$. As a consequence, we obtain that for all $k\in\N$
\[
  D^\ap\lb w^{(1)},w^{(k+1)}\rb \ \geq \ \left( \prod_{j=2}^{k}
  \frac{(q_j-2)\cdot (q_{j-1}-1)}{q_j\cdot q_{j-1}}\right) \cdot \frac{q_1-2}{p_1(q_1-1)+1}  \ . 
\]
Clearly, we can now choose the parameters $p_1$ and $q_k$ with $k\in\N_0$ such that
\[
  \inf_{k\in\N} D^\ap\lb w^{(1)},w^{(k)}\rb \ > \ 0 \ . 
\]
Due to Theorem~\ref{t:unique_ergodicity}, this shows that $X=\eta_{2,2}^{-1}(Y)$ cannot be uniquely
ergodic.  \qed\medskip

Our final aim is to modify the construction of the Toeplitz sequences in the proof of
Theorem~\ref{t.toeplitz_with_minimal_lifts} in order to obtain not only minimal, but strictly ergodic
lifts.

\begin{thm}\label{t:unique_ergodic_lift}
  Let $h\in \cF(\kappa,\ell)$ be permutative. Then there exists a Toeplitz subshift
  $Y\subseteq \cA_{\kappa}^{\Z}$ such that $X= \eta_h^{-1}(Y)$ is strictly ergodic.
\end{thm}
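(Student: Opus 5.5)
We modify the construction from the proof of Theorem~\ref{t.toeplitz_with_minimal_lifts}. The aim is for the return permutations not only to act transitively but to be \emph{equidistributed}. Fix a scale with $p_1>\ell-1$ and integers $q_k=p_{k+1}/p_k$ that grow fast enough for $\sum_k \kappa^{\ell-1}\cdot\#\cA_\kappa^{\ell-1}!/q_k<\infty$. As before, set $v^{(k)}=w^{(k)}\kappa^{\ell-1}$ and
\[
w^{(k+1)} \ = \ w^{(k)}u^{(k,0)}w^{(k)}u^{(k,1)}\ldots w^{(k)}u^{(k,q_k-2)}w^{(k)} \ ,
\]
but choose the fillers more carefully. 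By Lemma~\ref{l:independence_lemma}, for every $u\in\cA_\kappa^{\ell-1}$ the map $u'\mapsto\Phi_{w^{(k)}u'}(u)$ is a bijection. We will choose the fillers $u^{(k,i)}$ in consecutive blocks so that the composed prefix permutations $\Pi_i=\Phi_{w^{(k)}u^{(k,0)}\ldots w^{(k)}u^{(k,i-1)}}$ run, up to a bounded error, uniformly over a transitive group. The simplest concrete choice is this: pick a fixed cyclic arrangement $u^{(k,i)}=c_{i \bmod \kappa^{\ell-1}}$ such that $\Phi_{w^{(k)}c_j}$ maps a chosen base prefix through all of $\cA_\kappa^{\ell-1}$ cyclically. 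If cycling a single point is not enough, we instead arrange that $i\mapsto \Pi_i(u)$ visits each $u'$ with frequency exactly $\kappa^{-(\ell-1)}$ over every full period of length $N=\kappa^{\ell-1}$ (or $N=\#\langle\cdot\rangle$), simultaneously for all $u$. The choice that makes this exact is to take the fillers so that $\Phi_{w^{(k)}u^{(k,i)}}=\pi$ for all $i$, where $\pi$ is a fixed cyclic permutation of $\cA_\kappa^{\ell-1}$ (a single $\kappa^{\ell-1}$-cycle). Lemma~\ref{l:independence_lemma} only gives surjectivity for fixed $u$, so this needs care. I would try $\ell=2$-type reasoning first, and then, for general $h$, replace ``single cycle for all $u$'' by choosing the fillers adaptively along blocks so that every $u$ gets equidistributed. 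The fallback is to use $\kappa^{\ell-1}$ consecutive copies chosen via the lemma for each starting prefix in turn, which yields a group-like averaging with error $O(\kappa^{2(\ell-1)}/q_k)$. Conditions (T1)--(T4) and transitivity, hence minimality via Theorem~\ref{t:intro_minimality}, follow exactly as in Theorem~\ref{t.toeplitz_with_minimal_lifts}, and regularity holds because only a $(\ell-1)/p_k$ proportion of placeholders remains at stage $k$.

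For unique ergodicity we verify criterion (\ref{e.unique_ergodicity_criterion_1}) of Theorem~\ref{t:unique_ergodicity}, with $y$ the two-sided Toeplitz sequence of Proposition~\ref{p.period_structure_of_toeplitz_subshift}. Fix $w\in\cW(Y)$; it is contained in $w^{(k_0)}$ for some $k_0$. The key estimate is
\[
D^\ap\lb w,w^{(k+1)}\rb \ \leq \ \varepsilon_k \ ,
\]
with $\varepsilon_k\to 0$. The point is that occurrences of $w$ inside $w^{(k+1)}$ lie inside the copies of $w^{(k)}$, up to at most $q_k(|w|+\ell)$ occurrences straddling fillers. The $i$-th copy contributes the counts of $w^{(k)}$ transported by $\Pi_i$. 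Since $\Pi_i(u)$ is equidistributed over each period, the counts $\#\Ap_{u\mapsto u'}$ equalise up to one period's worth, so $D^\ap(w,w^{(k+1)})\lesssim N/q_k+(|w|+\ell)/p_k$. This is sharper than Lemma~\ref{l.word_balance_concatenation}, which only propagates $D^\ap(w,w^{(k)})$ without decreasing it. The mechanism is the averaging of $u\mapsto\Pi_i(u)$, in contrast to the non-uniquely-ergodic example of Theorem~\ref{t.Toeplitz_with_minimal_but_non-uniquely_ergodic_lift}, where the permutations were mostly the identity.

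Finally, an arbitrary window $y_{[k,k+n]}$ with $p_{m}\ll n\le p_{m+1}$ is, up to boundary pieces of relative size $O(p_m/n)$, a concatenation of copies of $w^{(m)}$ and short filler blocks. Applying Lemma~\ref{l.word_balance_concatenation} to this decomposition, together with the bound on $D^\ap(w,w^{(m)})$, gives $\sup_k D^\ap(w,y_{[k,k+n]})\to0$. The main obstacle is the choice of fillers guaranteeing equidistribution of $\Pi_i(u)$ \emph{for all $u$ simultaneously}, since Lemma~\ref{l:independence_lemma} controls only one starting prefix at a time. I expect to resolve it by the block construction: in each block, successively steer each prefix through all targets, accepting a bounded imbalance per block that is killed by $q_k\to\infty$.
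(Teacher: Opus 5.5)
Your architecture matches the paper's: the Toeplitz scheme of Theorem~\ref{t.toeplitz_with_minimal_lifts}, fillers chosen so that the partial products $\Pi_i$ equidistribute every starting prefix, and then criterion (\ref{e.unique_ergodicity_criterion_1}) of Theorem~\ref{t:unique_ergodicity}. Your estimates for $D^\ap(w,w^{(k+1)})$ and for general windows would go through once that equidistribution is available. But that equidistribution is exactly the step you leave open, and none of your routes closes it.

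The fixed-cycle choice $\Phi_{w^{(k)}u^{(k,i)}}=\pi$ needs $S^{(k)}=\{\Phi_{w^{(k)}u}\mid u\in\cA_\kappa^{\ell-1}\}$ to contain a $\kappa^{\ell-1}$-cycle, and this can fail. Take the code $x_0x_1x_2\mapsto x_2-x_0\bmod 2$ of Section~\ref{ExampleFamily} with $p_k$ even. Then formula \eqref{e.return_permutation_formula_1} shows that $S^{(k)}$ is the set of the four translations of $\Z_2^2$, a Klein four-group with no $4$-cycle.

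Your cyclic arrangement through a base prefix and your steering fallback share one defect. Lemma~\ref{l:independence_lemma} lets you prescribe $\Pi_{i+1}(u)$ for one $u$ at a time. The trajectories $i\mapsto\Pi_i(u'')$ of all other prefixes are dictated by the same fillers and are uncontrolled. A bounded imbalance per block does not rescue this. There are of order $q_k$ blocks, and nothing forces the per-block imbalances of a non-steered prefix to cancel. So the relative imbalance need not tend to zero however large $q_k$ is, and the claimed error $O(\kappa^{2(\ell-1)}/q_k)$ is unsupported.

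The missing idea is to balance in the group rather than in its action. Let $G_k=\langle S^{(k)}\rangle$. By Lemma~\ref{l:independence_lemma}, $G_k$ acts transitively on $\cA_\kappa^{\ell-1}$, so orbit--stabiliser gives $\#\{g\in G_k\mid g(v)=v'\}=\#G_k/\kappa^{\ell-1}$ for all $v,v'$. Hence, if $(\Pi_i)$ visits every element of $G_k$ equally often, then $\Pi_i(v)$ is exactly equidistributed for every $v$ simultaneously.

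Since $\Pi_{i+1}=\Phi_{w^{(k)}u^{(k,i)}}\circ\Pi_i$, the $\Pi_i$ are the vertices of a walk in the left Cayley graph of $G_k$ with respect to $S^{(k)}$. Choosing fillers means choosing edge colours along this walk. The graph is connected and balanced, with all out-degrees equal to $\#S^{(k)}=\kappa^{\ell-1}$, so it has an Euler circuit. That circuit leaves every vertex exactly $\kappa^{\ell-1}$ times. Reading its colours off as $u^{(k,0)},\ldots,u^{(k,q_k-2)}$ with $q_k=\kappa^{\ell-1}\#G_k$ yields $\#\{i\mid \Pi_i(v)=v'\}=\#G_k$ for all $v,v'$.

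This is the paper's construction. It gives exact balance with bounded $q_k$, which makes your growth condition on $q_k$ unnecessary.
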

To prove this statement, we first recall a classical result from finite graph theory. A finite
directed graph is a pair $D=(V,E)$, where $V$ is a finite set of vertices and $E\subseteq V\times V$
is a set of directed edges. It is called \emph{connected} if for every $v,w\in V$ there are vertices
$v=v_0,v_1,\ldots,v_n=w$ such that $(v_i,v_{i+1})\in E$ for $i\in \{0\ld n-1\}$. For $v\in V$, define
the {\em indegree} of $v$ as $\deg^-(v)=\#\{(u,v)\in E\}$, and the {\em outdegree} of $v$ as
$\deg^+(v)=\#\{(v,u)\in E\}$. The graph is \emph{balanced} if $\deg^-(v)=\deg^+(v)$ for every
$v\in V$.  An \emph{Euler circuit} in $D$ is a finite sequence of edges $(v_0,v_1)\ld(v_{n-1},v_n)$ in
which every edge of $D$ appears exactly once.  We will use the following standard criterion for the
existence of an Euler-circuit.
\begin{thm}\label{t:Euler-circuit}\cite[Theorem~1.7.2]{BangJensenGutin2009Digraphs}.
A finite directed graph admits an Euler circuit if it is connected and balanced.
\end{thm}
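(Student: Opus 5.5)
We would prove Theorem~\ref{t:Euler-circuit} by the classical maximal-trail argument (Hierholzer's method). Here connectedness is in the directed sense used in the definition above, and we assume $E\neq\varnothing$, since otherwise the statement is vacuous. A \emph{trail} is a finite sequence of edges $(v_0,v_1),(v_1,v_2)\ld(v_{n-1},v_n)$ in $E$, consecutive in this sense, with no edge repeated. Since $E$ is finite, there is a trail $C$ of maximal length $n\geq 1$. The plan is to show first that $C$ is closed, that is $v_n=v_0$, and then that $C$ uses every edge. By definition $C$ is then an Euler circuit.

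\emph{Step 1 ($C$ is closed).} Suppose $v_n\neq v_0$. Count the edges of $C$ entering and leaving $v_n$. Every occurrence of $v_n$ as an interior vertex $v_i$ with $0<i<n$ accounts for one incoming and one outgoing edge of $C$. The final position contributes one further incoming edge, and $v_n$ is not the initial vertex. Hence $C$ uses exactly one more edge into $v_n$ than out of $v_n$. Since $\deg^-(v_n)=\deg^+(v_n)$ and $C$ uses at most $\deg^-(v_n)$ incoming edges, $C$ uses at most $\deg^+(v_n)-1$ outgoing edges at $v_n$. So some edge $(v_n,u)\in E$ is unused by $C$, and appending it yields a longer trail, contradicting maximality.

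\emph{Step 2 ($C$ uses every edge).} Suppose some edge $(a,b)\in E$ is not used by $C$. By connectedness there is a path from $v_0$ to $a$. Follow this path and then the edge $(a,b)$. The walk starts at a vertex of $C$, so the first edge along it that is not in $C$ has its tail on $C$: either all preceding edges lie in $C$, so their heads are vertices of $C$, or it is the first edge of the walk. This gives a vertex $v_i$ of $C$ with an unused outgoing edge. Remove the edges of $C$ from $E$. Since $C$ is closed, it uses equally many in- and out-edges at every vertex, so the remaining graph $D'$ is still balanced. Take a maximal trail $C'$ in $D'$ starting at $v_i$; it is nonempty. By the same count as in Step 1, applied in $D'$, it must end at $v_i$. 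Splicing $C'$ into $C$ at position $i$ gives a trail of length $n+|C'|>n$, which is a contradiction.

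We expect the main obstacle to be the degree-counting step used twice above. Its content is that in a balanced graph a maximal trail can only get stuck at its starting vertex. The care needed there lies in accounting correctly for repeated visits to the same vertex and for the special role of the endpoints. Everything else is routine bookkeeping.
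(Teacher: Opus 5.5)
The paper gives no proof of this statement. It is quoted from Bang-Jensen and Gutin and used only as a tool in the proof of Theorem~\ref{t:unique_ergodic_lift}, so there is no internal argument to compare with.

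Your proof is the standard longest-trail argument, and it is correct. The degree count in Step~1 is right, and it handles loops and repeated visits properly. For any vertex $v$, the number of edges of $C$ entering $v$ minus the number leaving $v$ is $1$ if $v=v_n\neq v_0$, $-1$ if $v=v_0\neq v_n$, and $0$ otherwise. The same identity gives the balance of $D'$ once $C$ is closed. Step~2 correctly uses the directed notion of connectivity that the paper defines.

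Three minor remarks:
\begin{itemize}
\item For $E=\varnothing$ the statement is trivial, since the empty sequence is an Euler circuit, rather than vacuous.
\item Step~2 can be shortened. Once you know $C$ is closed, rotate it so that it starts and ends at $v_i$, then append the unused edge leaving $v_i$. This already gives a trail of length $n+1$, so you never need to pass to $D'$ and $C'$.
\item Keep Step~1 even though the paper's literal definition of an Euler circuit does not require $v_n=v_0$. The later construction assumes without loss of generality that the circuit starts at the identity, and that rotation relies on closedness.
\end{itemize}
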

For our construction, it will be useful to consider colored edges. An {\em edge-colored directed
  graph} is a directed graph together with a finite set of colors $C$ and a coloring map
$\chi:E\to C$. An Euler circuit of an edge-colored case will be denoted as
\[
v_0,\chi(v_0,v_1),v_1,\chi(v_1,v_2),\ldots,v_{n-1},\chi(v_{n-1},v_n)  , 
\]
thus keeping track of the colors.  Further, we will consider the case where $G$ is a finite group and
$S\subseteq G$. The \emph{left Cayley graph} of $G$ induced by $S$ has vertex set $G$ and the set of
edges $\{(g, sg)\ |\ g\in G,\ s\in S\}$.  Every vertex has indegree and outdegree $|S|$, so this graph
is balanced. If $S$ generates $G$, then it is connected.  Given a finite set $A$, we denote by
$\Per(A)$ the group of permutations of $A$.

Now we are ready to prove Theorem~\ref{t:unique_ergodic_lift}. As mentioned, we will use a refined
version of the construction in Theorem~\ref{t.toeplitz_with_minimal_lifts} and mainly concentrate on
the required modifications.
\begin{proof}[\textit{\bfseries Proof of Theorem~\ref{t:unique_ergodic_lift}}]
  Fix a permutative block code $h\in\cF(\kappa,\ell)$.  In order to start the inductive construction,
  we first choose a word $w^{(1)}\in \cA_{\kappa}^*$ and let $v^{(1)} = w^{(1)}\kappa^{\ell-1}$ and
  $p_1 = |v^{(1)}|$. Now, suppose that the words $v^{(1)}\ld v^{(k)}$ with
  $v^{(j)}=w^{(j)}\kappa^{\ell-1}$ and $|v^{(j)}|=p_j$ have already been defined.  Due to
  Lemma~\ref{l:independence_lemma}, the mapping
  \[
    \pi_k: \cA_{\kappa}^{\ell-1}\to \Per\lb\cA_{\kappa}^{\ell-1}\rb\quad , \quad u\mapsto
    \Phi_{w^{(k)}u}
  \]
  is injective. Consider the finite set of permutations
\[
	S^{(k)} = \left\{ \Phi_{w^{(k)}u}\ \middle|\ u\in \cA_{\kappa}^{\ell-1}\right\}\ .
\]
Denote by $G_k$ the group generated by $S^{(k)}$.  Consider the left Cayley graph $(G_k,E_k)$
of $G_k$ induced by $S_k$ (so that $E_k=\{(g,sg)\mid g\in G_k, s\in S^{(k)}\}$), with the edge
$(g, \Phi_{w^{(k)}u}\circ g)$ colored by $u$. In other terms, we choose $\cA_{\kappa}^{\ell-1}$ to be
the set of colors and use the mapping $\chi(g,h) = \pi_k^{-1}(hg^{-1})$ as the coloring map. We set
\[
q_k = \sum_{g\in G_k} \deg^+(g) = \kappa^{\ell-1}\cdot \# G_k\ .
\]
The graph $(G_k,E_k)$ is connected and balanced, so Theorem~\ref{t:Euler-circuit} gives an Euler
circuit. We assume without loss of generality that it starts at the identity and write it as
\begin{equation}\label{e:Euler circuit}
	g^{(k, 0)}, u^{(k,0)}\ld g^{(k, q_k-1)}, u^{(k, q_k-1)}
\end{equation}
where $g^{(k,0)}=\Id_{\cA_\kappa^{\ell-1}}\in \Per\lb\cA_{\kappa}^{\ell-1}\rb$ and
$\chi\left( g^{(k,i)}, g^{(k,i+1)}\right) = u^{(k, i)}$ for $i \in \{0\ld q_k-2\}$.  Now choose
\[
	w^{(k+1)} \ = \ w^{(k)}u^{(k,0)}\ldots w^{(k)}u^{(k,q_k-2)}w^{(k)}\quad  , \quad v^{(k+1)} \ = \  w^{(k+1)}\kappa^{\ell-1}
\]
and $p_{k+1} = p_k\cdot q_k$. Note that here we did not use the last edge in the Euler circuit.

The verification of (T1)--(T4) is the same as in Theorem~\ref{t.toeplitz_with_minimal_lifts}. We
obtain a one-sided Toeplitz sequence $\xi$ with period structure $P=(p_k)_{k\in\N}$ and set
$Y=X_\xi$. Since the density of holes in the $p_k$-skeleton is $\frac{\ell-1}{p_k}\to0$, the Toeplitz
flow $Y$ is regular and hence strictly ergodic. Similar to the situation in
Theorem~\ref{t.toeplitz_with_minimal_lifts}, $S^{(k)}$ acts transtively on $\cA_{\kappa}^{\ell-1}$ and
$S^{(k)}\subseteq \cR(w^{(k)})$. Since $|w^{(k)}|\to \infty$, Theorem~\ref{t:intro_minimality}(iii)
shows that $X = \eta_h^{-1}(Y)$ is minimal.

Now we show that $X$ is uniquely ergodic. It follows by construction that
\[
	\Phi_{w^{(k)}u^{(k, 0)}\ldots w^{(k)}u^{(k, i)}} = g^{(k, i+1)}
\]
for $i \in \{0 \ld q_k-2\}$. Transitivity of $S^{(k)}$ acting on $\cA_\kappa^{\ell-1}$ in combination with the
orbit-stabilizer theorem give that, for any $v, v' \in \cA_{\kappa}^{\ell-1}$,
\[
	\# \left\{g\in G_k\ \middle|\ g(v) = v'\right\} = \frac{\#G_k}{\kappa^{\ell-1}}\ .
\]
Meanwhile, since every $g\in G_k$ occurs $\kappa^{\ell-1}$ times among the source vertices of the
Euler circuit \eqref{e:Euler circuit}, it follows that
\begin{equation}\label{e.vertex_balance}
\#\left\{i\in \{0\ld q_k-1\}\ \middle|\ g^{(k,i)}(v) = v'\right\} = \#G_k
\end{equation}
for any $v, v' \in \cA_{\kappa}^{\ell-1}$. Now, in order to apply Theorem~\ref{t:unique_ergodicity},
choose $y\in Y$ with $y_{[0,\infty)}=\xi$, and fix $w\in\cW(Y)$. Take $k$ such that
$|w|\leq|w^{(k)}|$. Consider $n, m\in \N$. We aim to estimate the quantity $D^\ap(w, y_{[n, n+m-1]})$
as defined in \eqref{e.subword_frequency_balance}.  First, we consider the aligned $p_{k+1}$-blocks.
Let $n_1, n_2$ be the smallest and largest integers in $[n, n+m-1]$, respectively, that are a multiple
of $p_{k+1}$.  Then by construction
\[
  y_{[n_1, n_2-1]} = w^{(n+1)}v_1w^{(n+1)}v_2\ldots w^{(n+1)}v_{t} \ ,
\]
where $v_1\ld v_t\in \cA_{\kappa}^{\ell-1}$ and $t\leq \floor{\frac{m}{p_{k+1}}}$. Suppose that
$w\triangleleft w^{(k)}$, that is, $w^{(k)}_{[j, j+|w|-1]} = w$ for some $j\leq
|w^{(k)}|-|w|$. Consider any prefix $u\in \cA_{\kappa}^{\ell-1}$. Then the prefixes
$\xi^{u}(y)_{[k,k+\ell-2]}$ that appear in $\xi^{u}(y)$ at positions $k=j,j+p_k\ld j+(q_k-1)p_{k}$
(over the the corresponding $p_k$-separated occurrences of $w$ in $w^{(k+1)}$) are
\[
\Phi_{w^{(k)}_{[0,j-1]}}\left(g^{(k,i)}(u)\right),\qquad i= 0\ld q_k-2,
\]
where $\Phi_{w^{(k)}_{[0,j-1]}}=\Id_{\cA^{\ell-1}_\kappa}$ for $j=0$. Due to \eqref{e.vertex_balance},
these prefixes are uniformly distributed over $\cA_{\kappa}^{\ell-1}$, independent of the prefix
$u$. Thus, all occurrences of $w$ contained entirely in one copy of $w^{(k+1)}$ do not contribute to
$D^\ap\lb w,y_{[n,n+m-1]}\rb$. While occurences of $w$ at other positions may contribute, the interval
$[n_1,n_2-1]$ contains only $tq_k\cdot(|w|+\ell-2)-|w|$ such positions. If we add the at most
$2p_{k+1}$ remaining positions in $[n,n+m-1]\smin[n_1,n_2-1]$, this yields
\begin{eqnarray*}
  D^{\ap}(w, y_{[n, n+m-1]})  &  \leq & \frac{tq_k\cdot(|w|+\ell-2)+2p_{k+1}-|w|}{m-|w|} \\ & \leq &
 \frac{tq_k\cdot (|w|+\ell)+2p_{k+1}}{tp_{k+1}} \ = \ \frac{|w|+\ell}{p_k} + \frac{2}{t} \ \mKonv \ 0 \ . 
\end{eqnarray*}
If follows from Theorem~\ref{t:unique_ergodicity} that $X$ is uniquely ergodic. 
\end{proof}
\begin{remark}
  The Toeplitz flow constructed above is actually linearly recurrent, but this is not
  essential. Indeed, the numbers $q_k=\kappa^{\ell-1}\#G_k$ are uniformly bounded because $G_k$ is a
  subgroup of the finite group $\Per\lb\cA_{\kappa}^{\ell-1}\rb$. At the $k$-th stage, however, we may
  traverse the chosen Euler circuit $c_k$ times before constructing $w^{(k+1)}$. This would result in
  multipliers $q_k=c_k\kappa^{\ell-1}\#G_k$ in the Toeplitz construction, without affecting the
  balance argument.  By choosing $c_k$ to grow sufficiently fast, one obtains similar examples that
  are not linearly recurrent.
\end{remark}

%\section{Dynamical spectra of lifted subshifts}

%\subsection{Toeplitz flows}\ \bigskip

%{\bf Notation:} Always use $\alpha,\beta,\gamma\in\cA$ for single symbols, $a,b,c\in\cA_\kappa^{n-1}$ for
%pre- and suffixes and $u,v,w$ for longer words. Adapt notation above and below.
%\section{Symmetry of lifts}
%	\input{symmetry2}
%\section{Minimality of lifts}
%  \input{minimality2}
  
%\section{Unique ergodicity of lifts}
%\input{uniqueergodicity}

%\bibliographystyle{alpha} \bibliography{tj-references}

\newcommand{\etalchar}[1]{$^{#1}$}

\end{document}